\definecolor{shaded}{rgb}{0.99, 0.76, 0.8} 
\theoremstyle{definition}
\newtheorem{thm}{Theorem}[section]
\newtheorem{defn}[thm]{Definition}
\newtheorem{ex}[thm]{Example}
\newtheorem{rem}[thm]{Remark}
\newtheorem{prop}[thm]{Proposition}
\newtheorem{cor}[thm]{Corollary}
\newtheorem{lem}[thm]{Lemma}
\DeclareMathOperator{\id}{id}
\newcommand{\Z}{\mathbb{Z}}
\newcommand{\be}{\mathbf{e}}
\renewcommand{\bar}{\overline}
\newcommand{\drawislandnine}[9]{
\vcenter{\hbox{
\begin{tikzpicture}[scale=.5,every node/.style={scale=.7}]
\draw[thick, shift={(-.5,-.5)}] (0,0) rectangle (5,5);
\draw[densely dotted, shift={(-.5,-.5)}] (0,0) grid (5,5);
\fill[shaded, shift={(-.5,-.5)}] (2,2) rectangle (3,3);
\node at (1,3) {$#1$};
\node at (1,2) {$#2$};
\node at (1,1) {$#3$};
\node at (2,1) {$#4$};
\node at (3,1) {$#5$};
\node at (3,2) {$#6$};
\node at (3,3) {$#7$};
\node at (2,3) {$#8$};
\node at (2,2) {$#9$};
\foreach \x in {0,...,4} {
 \node at (\x,0) {.};
 \node at (\x,4) {.};
 \node at (0,\x) {.};
 \node at (4,\x) {.};
}
\end{tikzpicture}
}}
}
\newcommand{\drawislandeight}[8]{\drawislandnine{#1}{#2}{#3}{#4}{#5}{#6}{#7}{#8}{1}}
\newcommand{\drawisland}[4]{\drawislandeight{#1}{#1}{#2}{#2}{#3}{#3}{#4}{#4}}
\newcommand{\drawtallisland}[1]{
\def\numberone{#1}
\dtictd
}
\newcommand{\dtictd}[9]{
\vcenter{\hbox{
\begin{tikzpicture}[scale=.5,every node/.style={scale=.7}]
\draw[thick, shift={(-.5,-.5)}] (0,0) rectangle (5,6);
\draw[densely dotted, shift={(-.5,-.5)}] (0,0) grid (5,6);
\fill[shaded, shift={(-.5,-.5)}] (2,2) rectangle (3,4);
\node at (1,4) {$\numberone$};
\node at (1,3) {$#1$};
\node at (1,2) {$#2$};
\node at (1,1) {$#3$};
\node at (2,1) {$#4$};
\node at (3,1) {$#5$};
\node at (3,2) {$#6$};
\node at (3,3) {$#7$};
\node at (3,4) {$#8$};
\node at (2,4) {$#9$};
\node at (2,2) {$1$};
\node at (2,3) {$1$};
\foreach \x in {0,...,4} {
 \node at (\x,0) {.};
 \node at (\x,5) {.};
 \node at (0,\x) {.};
 \node at (4,\x) {.};
}
\end{tikzpicture}
}}}
\begin{document}

\title[Digital Second Homotopy Group]{A Second Homotopy Group for Digital Images}

\author[G.\ Lupton]{Gregory Lupton}
\author[O.\ Musin]{Oleg Musin}
\author[N.\ Scoville]{Nicholas A. Scoville}
\author[P.C.\ Staecker]{P.~ Christopher Staecker}
\author[J.\ Trevino]{Jonathan Trevi{\~n}o-Marroqu{\'i}n}

\address{Department of Mathematics and Statistics, Cleveland State University, Cleveland OH 44115 U.S.A.}

\email{g.lupton@csuohio.edu}

\address{
School of Mathematical and Statistical Sciences , The University of Texas Rio Grande Valley}

\email{oleg.musin@utrgv.edu}

\address{Department of Mathematics, Computer Science, and Statistics, Ursinus College, Collegeville PA 19426 U.S.A.}

\email{nscoville@ursinus.edu}

\address{Department of Mathematics, Fairfield University, Fairfield CT 06824 U.S.A.}

\email{cstaecker@fairfield.edu}

\address{Centro de Investigaci{\'o}n en Matem{\'a}ticas, A.C., Guanajuato GTO 36023 M{\'e}xico}

\email{jonathan.trevino@cimat.mx}

\date{\today}

\keywords{Digital Image, Digital Topology,  Trivial Extension, Subdivision,  Digital Second Homotopy Group, Digital Sphere}
\subjclass[2010]{ (Primary) 55Q99;  (Secondary) 68U10 68R99}

\thanks{The work presented here is a direct result of working sessions held by the authors during the AIM workshop \emph{Discrete and Combinatorial Homotopy Theory} held in San Jose, CA the week of March 13--17, 2023.  We thank the AIM and the organizers for providing a stimulating environment and enabling our collaboration. It was at the workshop that we became aware of the graph-theoretic work mentioned at several points in the paper.}

\begin{abstract}   We define a second (higher) homotopy group for digital images. Namely, we construct a functor from digital images to abelian groups, which
closely resembles the ordinary second homotopy group from algebraic topology.    We illustrate that our approach can be effective by computing this (digital) second homotopy group for a digital 2-sphere.
\end{abstract}

\maketitle

\section{Introduction}

\emph{Digital topology} refers to the use of notions and methods from (algebraic) topology to study digital images. A digital image in our sense is an idealization of an actual digital image which consists of  pixels in the plane, or higher dimensional analogues of such. Specifically, a \emph{digital image} is a subset of $\Z^n$ together with  some chosen ``adjacency relation'' induced by the integer lattice. 
 The aim of digital topology is to provide useful theoretical background for certain steps of image processing, such as  contour filling, border and boundary following, thinning, and feature extraction or recognition (e.g. see p.273 of \cite{Ko-Ro91}).  There is an extensive literature on digital topology (see e.g. \cite{Ro86, Ko-Ro91, Bo99, Evako2006} and the references therein).

A number of authors have studied the fundamental group in the setting of digital topology (see \cite{Kong89, Bo99, LOS19c, LuSc22} for a sample).
In this paper, we begin a development of higher homotopy groups in the digital topology setting, essentially extending the approach of \cite{LOS19c, LuSc22} to an initial study of  the second homotopy group of a digital image.  In subsequent work, we hope to continue the development begun here into a fuller treatment of higher homotopy groups of digital images. These notions do appear in the digital topology literature \cite{MVK14, VeKa15}, but these earlier treatments differ from ours, as we describe in Remark \ref{turkishremark}.   Higher homotopy groups have also been developed in a graph-theoretic setting that is very closely related to the setting in which we work (see \cite{BKL01, Do08, C-S21} for a sample of work in this area).   Our emphasis on the operations of \emph{trivial extension} and \emph{subdivision} of maps between digital images separates our work from this graph-theoretic work because these operations rely on specifics of the orthogonal integer lattice and do not naturally generalize to arbitrary graphs. Also, our present treatment is more computational: the second half of the paper focuses on a very hands-on calculation of a nontrivial second homotopy group.



The paper is organized as follows.  In Section \ref{sec: extension} we give basic definitions and define the fundamental notion of \emph{extension homotopy}. In Section~\ref{sec: subd and doubling}, we discuss a more general notion of column- or row-doubling, and subdivision of a map on a digital image. 
In Section~\ref{sec: defn of pi_2} we define the second homotopy group $\pi_2(X,x_0)$, show that it is an abelian group (Thm.~\ref{thm: group} and Thm.~\ref{thm: abelian}), and establish some basic properties such as independence of basepoint (Prop.~\ref{prop: basepoint}) and functoriality (Prop.~\ref{prop: functor}).  We take the general development far enough to establish behaviour with respect to products (Thm.~\ref{thm: products}).

The remainder of the paper is devoted to a calculation of the second homotopy group of a digital $2$-sphere.  Perhaps unsurprisingly, we find that this group is isomorphic to $\Z$.  But the way in which we calculate this to be so involves some interesting combinatorial ingredients.  Notably, in Section~\ref{sec: degree} we develop a \emph{triangle-counting function} for a map of the kind that represents an element of the homotopy group.  This may be conceived of as the degree of such a map, and it may be determined directly from the map in a very transparent way.  We complete the calculation of the second homotopy group of a digital $2$-sphere in Section~\ref{sec: flooding}.  A short Section~\ref{sec: future} ends the paper with some suggestions for future work.

\section{Homotopy, trivial extension, and extension-homotopy}\label{sec: extension}
\begin{defn}
A \emph{digital image} $X\subset \Z^n$ is a finite subset of the integer lattice, together with a chosen reflexive \emph{adjacency relation} denoted $\sim$. 

Typical choices of the adjacency relation are the various adjacencies denoted $c_i$ for $i \in \{1,\dots,n\}$, in which $x\sim y$ when the coordinates of $x$ and $y$ differ by at most 1 in at most $i$ positions, and are equal in all other positions. These $c_i$ adjacencies follow the lattice structure of $\Z^n$, with different interpretations of diagonal adjacencies. The $c_1$ relation includes no diagonally adjacent points, while the $c_n$ adjacency counts any diagonal points as adjacent.
In $\Z^2$, the $c_1$ adjacency is referred to as ``4-adjacency'', because each point is adjacent to 4 points other than itself, while the $c_2$ adjacency is referred to as ``8-adjacency''.

In the digital topology literature, the adjacency relation is often taken to be antireflexive, so that a point is not adjacent to itself. In our case, though, we follow \cite{LOS19c, LuSc22} and require our relation to be reflexive, which simplifies some definitions and clarifies the connections to related work in graph theory.
\end{defn}

Let $I_n = \{0,\dots, n\}$ be the \emph{digital interval}, considered with the usual adjacency in which $a \sim b$ if and only if $|a-b|\le 1$. 

For a real interval $[n,m]$, define $[n,m]_{\Z}=[n,m]\cap \Z$, so that $I_n = [0,n]_\Z$. We will also consider products of the form $I_{m,n} = I_{m} \times I_{n}$, which we refer to as \emph{rectangles}. For a rectangle $I_{m,n}$, let $\partial I_{m,n}$ be the \emph{boundary}, defined by:
\[ \partial I_{m,n} = (\{0,m\} \times I_{n}) \cup (I_{m} \times \{0,n\}). \]

For products of digital images, we always use the categorical product adjacency.  That is, if $x, x' \in X$ and $y, y' \in Y$, then $(x, y) \sim (x', y') \in X \times Y$ if and only if $x \sim x'$ and $y \sim y'$. In the case of $I_{m,n} \subset \Z^2$, we have $(a,b) \sim (a',b')$ if and only if $|a-a'| \le 1$ and $|b-b'|\le 1$. This choice of product essentially dictates that we always use the $c_2$ adjacency (or $8$-adjacency) on the rectangle $I_{m,n}$. 

\begin{defn}\label{def: continuous}
For two digital images $X,Y$, a function $f:X\to Y$ is \emph{[digitally] continuous} when $x\sim x'$ implies $f(x)\sim f(x')$ for all $x,x' \in X$.

It is easy to see that the composition of two continuous functions is continuous.
\end{defn}

There is a natural interpretation of this setup in the context of graph theory, and in fact a body of very similar work has developed in graph theory, independent from and until recently unknown to the digital topology community. Notably the areas of A-theory (see \cite{BKL01, Bar-La05, B-B-L-L06}) and $\times$-homotopy theory (see \cite{Do08, Do09, C-S21}). From the graph theoretic point of view, any digital image $X$ may be regarded as an induced (reflexive) subgraph of the integer lattice $\Z^n$. Then a digitally continuous function $f:X\to Y$ is simply a graph homomorphism, provided that we represent $X$ and $Y$ as reflexive graphs (that is, we must have a looped edge at every vertex in the graph). These looped edges must be present in the codomain to allow the map $f$ to collapse an edge to a vertex and yet to map edges to edges. For example if $a\sim b\in X$ with $a\neq b \in X$ and $f(a)=f(b)=c$, then there must be a looped edge at $c$ in order for $f$ to carry the edge $(a,b)$ to an edge in $Y$.

In abstract terms, the category of digital images and digitally continuous functions is the same as the category of reflexive graphs and graph homomorphisms. Thus in many cases, the constructions used in A-theory and $\times$-homotopy theory of e.g.\ \cite{BKL01, Do08, C-S21} are the same as the constructions in the digital topology literature, which developed independently.

The differences between A-theory and $\times$-homotopy theory and the digital theory arise in different choices made in the definition of homotopy:

\begin{defn}\label{homotopydef}
Two continuous maps $f,g:X \to Y$ are \emph{[digitally] homotopic} if there is some $k$ with a continuous map $H:X\times I_k \to Y$ with $H(x,0) = f(x)$ and $H(x,k)=g(x)$ for all $x$. In this case we write $f \simeq g$. \end{defn}

This notion of homotopy gives an equivalence relation on the set of all maps $X \to Y$ (see Lemma 3.16 of \cite{LOS19a}, for example).
The choice of product in the definition of homotopy has an important effect on developments. As noted above, we use the categorical product, which leads to the homotopy notion typically featured in the $\times$-homotopy theory of \cite{Do08}. The development in A-theory \cite{BKL01} has traditionally used the ``box product'', in which $(x,t)\sim (y,s)$ if and only if either $x\sim y$ and $t=s$, or $x=y$ and $t\sim s$, which leads to a weaker notion of homotopy often referred to as the \emph{box homotopy}. 

The digital topology literature following Boxer \cite{Bo99} has typically used the word ``homotopy'' to indicate the box homotopy, though some papers have explored the categorical product homotopy: it is called ``strong homotopy'' in \cite{St21}. The present paper follows the terminology used in \cite{LOS19c, LuSc22}, with the relation of Definition \ref{homotopydef} simply called ``homotopy.''

If $k=1$ in Definition~\ref{homotopydef}, then we refer to the homotopy as a \emph{one-step homotopy}.  That is, a one-step homotopy between maps $f, g\colon X \to Y$ is a continuous map $H\colon X \times I_1 \to Y$ with $H(x, 0) = f(x)$ and $H(x, 1) = g(x)$. In this case, we say that $f$ and $g$ are \emph{one-step homotopic}. 
There is a simple criterion for maps to be one-step homotopic:

\begin{lem}\label{onesteplem}
\cite[Theorem 2.4]{St21} Suppose continuous maps $f,g\colon X\to Y$ satisfy $f(x)\sim g(x')$ in $Y$  whenever $x\sim x'$ in $X$.  Then $f$ and $g$ are one-step homotopic.  Indeed, the homotopy $H\colon X \times I_1 \to Y$ defined by $H(x, 0) = f(x)$ and $H(x, 1) = g(x)$ is a one-step homotopy from $f$ to $g$.\end{lem}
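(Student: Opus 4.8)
The statement to prove is Lemma~\ref{onesteplem}: given continuous maps $f,g\colon X\to Y$ with the property that $f(x)\sim g(x')$ whenever $x\sim x'$, the function $H\colon X\times I_1\to Y$ defined by $H(x,0)=f(x)$, $H(x,1)=g(x)$ is a one-step homotopy from $f$ to $g$.

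\medskip

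\noindent\textbf{Proof proposal.} The plan is to verify directly that $H$ is digitally continuous; once that is established, the boundary conditions $H(x,0)=f(x)$ and $H(x,1)=g(x)$ hold by definition, so $H$ witnesses a one-step homotopy by Definition~\ref{homotodef}—rather, by the discussion following Definition~\ref{homotopydef}. So the whole content is continuity of $H$.

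First I would set up the adjacency relation on the domain $X\times I_1$. Recall that we use the categorical product adjacency, so $(x,t)\sim(x',t')$ in $X\times I_1$ if and only if $x\sim x'$ in $X$ and $|t-t'|\le 1$ in $I_1$. Since $I_1=\{0,1\}$, the condition $|t-t'|\le 1$ is automatic: \emph{every} pair $t,t'\in I_1$ satisfies it. Hence $(x,t)\sim(x',t')$ if and only if $x\sim x'$, with no constraint on the second coordinates. This is the key simplification that makes the lemma work.

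Next I would check continuity by cases on the second coordinates $t,t'$ of an adjacent pair $(x,t)\sim(x',t')$ (so $x\sim x'$ in $X$). There are effectively three cases up to symmetry. If $t=t'=0$, then $H(x,t)=f(x)\sim f(x')=H(x',t')$ because $f$ is continuous. If $t=t'=1$, then $H(x,t)=g(x)\sim g(x')=H(x',t')$ because $g$ is continuous. If $t=0$ and $t'=1$ (or vice versa), then $H(x,t)=f(x)$ and $H(x',t')=g(x')$, and $f(x)\sim g(x')$ precisely by the hypothesis of the lemma (applied to the adjacent pair $x\sim x'$); note the symmetric case $t=1$, $t'=0$ needs $g(x)\sim f(x')$, which follows from the hypothesis applied to $x'\sim x$ together with symmetry of the adjacency relation. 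In every case $H(x,t)\sim H(x',t')$, so $H$ is continuous.

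There is really no main obstacle here: the only subtlety is recognizing that in $I_1$ all pairs are adjacent, so the hypothesis is being invoked exactly for the ``mixed-level'' adjacent pairs. I would write this out in a single short paragraph rather than belaboring the case analysis.
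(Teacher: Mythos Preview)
Your proposal is correct and matches the paper's own proof: both verify continuity of $H$ by splitting into the cases $t=t'$ (where continuity of $f$ or $g$ applies) and $t\neq t'$ (where the hypothesis applies). Your version is slightly more explicit about why every pair in $I_1$ is adjacent and about the symmetric mixed case, but the argument is the same.
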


\begin{proof}
We need to confirm continuity of $H$.  That is, we require $H(x, t) \sim H(x', t')$ in $Y$ whenever we have $(x, t) \sim (x', t')$ in $X \times I_1$ (recall that we are using the categorical product for adjacencies in $X \times I_1$).  If $t'= t \in I_1$, then the adjacencies in $Y$ follow from continuity of $f$ or $g$.  If $t' \not= t \in I_1$, then the hypothesis provides the required adjacencies.   
\end{proof}

In fact, the hypothesis on $f$ and $g$ in Lemma~\ref{onesteplem} gives a characterization of when maps $f$ and $g$ are one-step homotopic, but we will not need the converse here. 
We will make repeated use of the following simple kind of one-step homotopy.

\begin{lem}\label{lem: spider move}
Let $f:X\to Y$ be a continuous map of digital images, with  $f(a) = b$ at some particular $a \in X$. Suppose $b' \in Y$ is adjacent to $b$ and also adjacent to $f(a')$ for every $a' \in X$ adjacent to $a$.
Define a map 
\[ g(x) = \begin{cases} f(x) & x \not= a \\ b' & x=a \end{cases} \]
Then $g$ is continuous and one-step homotopic to $f$ via the one-step homotopy $H \colon X \times I_1 \to Y$ with $H(x, 0) = f(x)$ and $H(x, 1) = g(x)$.
\end{lem}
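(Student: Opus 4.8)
The plan is to deduce this lemma directly from Lemma~\ref{onesteplem}, since the hypotheses have been arranged precisely so that $f$ and $g$ satisfy the one-step homotopy criterion. First I would check that $g$ is well-defined and digitally continuous. Continuity requires that $g(x) \sim g(x')$ whenever $x \sim x'$ in $X$. There are two cases: if neither $x$ nor $x'$ equals $a$, then $g$ agrees with $f$ on both points, so continuity follows from continuity of $f$; if (say) $x = a$ and $x' \not= a$ with $a \sim x'$, then $g(a) = b'$ and $g(x') = f(x')$, and the hypothesis says $b'$ is adjacent to $f(a')$ for every $a'$ adjacent to $a$, so in particular $b' \sim f(x')$. (The subcase $x = x' = a$ is trivial by reflexivity.) Hence $g$ is continuous.

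Next I would verify the hypothesis of Lemma~\ref{onesteplem} for the pair $f, g$: namely that $f(x) \sim g(x')$ in $Y$ whenever $x \sim x'$ in $X$. Again split into cases according to whether $x'$ (the argument of $g$) equals $a$. If $x' \not= a$, then $g(x') = f(x')$, and $f(x) \sim f(x')$ by continuity of $f$. If $x' = a$, then $x \sim a$, and $g(x') = g(a) = b'$; we must show $f(x) \sim b'$. When $x = a$ this is the hypothesis that $b \sim b'$ (using $f(a) = b$); when $x \not= a$ this is exactly the hypothesis that $b'$ is adjacent to $f(a')$ for every $a' \sim a$, applied with $a' = x$. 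So the hypothesis of Lemma~\ref{onesteplem} holds, and that lemma immediately yields that the map $H \colon X \times I_1 \to Y$ defined by $H(x,0) = f(x)$ and $H(x,1) = g(x)$ is a one-step homotopy from $f$ to $g$.

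I do not anticipate a genuine obstacle here; the content of the lemma is really just a bookkeeping repackaging of Lemma~\ref{onesteplem} specialized to a map $g$ that differs from $f$ at a single point. The one point requiring a little care is not to conflate the roles of the two arguments in the adjacency condition of Lemma~\ref{onesteplem}: the condition is asymmetric in form, $f$ on the first slot and $g$ on the second, so one should be careful to apply it in that order (though since one-step homotopy is symmetric one could also argue with the roles reversed, at the cost of invoking that the hypothesis of Lemma~\ref{onesteplem} is symmetric in a suitable sense). Apart from that, the proof is a short case analysis and can be written in a few lines.
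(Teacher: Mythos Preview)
Your proposal is correct and follows essentially the same approach as the paper: establish continuity of $g$ by noting it agrees with $f$ away from $a$ and using the hypothesis at $a$, then invoke Lemma~\ref{onesteplem}. The paper's proof is just terser, asserting in one line that $f$ and $g$ satisfy the condition of Lemma~\ref{onesteplem} without writing out the case analysis you supply.
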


\begin{proof}
Continuity of $g$ follows because (continuous) $f$ and $g$ agree apart from at $a \in X$, where we have $g(a) = b' \sim f(x) = g(x)$ for all $x \sim a$ (but not equal to $a$)  in $X$.  Then the maps  $f(x)$ and $g(x)$ satisfy the condition of Lemma~\ref{onesteplem}.  
\end{proof}

\begin{defn}\label{spidermove}
The type of one-step homotopy in Lemma~\ref{lem: spider move}, whereby a map is changed in value at a single point, is called  a \emph{spider move}. 
\end{defn}

The following theorem, which has appeared in both the digital topology literature and the $\times$-homotopy literature, shows that an arbitrary homotopy can be realized by a finite sequence of spider moves.  
\begin{thm}\label{spidermovethm}\cite[Proposition 4.4]{C-S21} \cite[Theorem 3.2]{St21} 
For continuous $f,g\colon X\to Y$, the maps $f$ and $g$ are homotopic if and only if they are homotopic by a finite sequence of spider moves. \qed
\end{thm}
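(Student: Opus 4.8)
The plan is to prove both implications separately, with the forward direction (homotopic $\Rightarrow$ finite sequence of spider moves) carrying essentially all of the content. The converse is immediate: if $f = f_0, f_1, \dots, f_r = g$ is a finite chain of maps in which consecutive ones differ by a spider move, then by Lemma~\ref{lem: spider move} each $f_i$ is one-step homotopic, hence homotopic, to $f_{i+1}$; since homotopy is an equivalence relation on maps $X \to Y$ (Lemma~3.16 of \cite{LOS19a}), transitivity gives $f \simeq g$.

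For the forward direction I would first reduce to the case of a one-step homotopy. Given a homotopy $H\colon X \times I_k \to Y$ from $f$ to $g$, restrict $H$ to $X \times \{j,j+1\}$ for $0 \le j \le k-1$ and identify $\{j,j+1\}$ with $I_1$ via $j \mapsto 0$, $j+1 \mapsto 1$. The adjacency induced on $\{j,j+1\} \subset I_k$ coincides with that of $I_1$, and the categorical product adjacency on $X \times I_k$ restricts to the categorical product adjacency on $X \times I_1$, so this restriction is a one-step homotopy from $H(-,j)$ to $H(-,j+1)$. Since $f = H(-,0)$ and $g = H(-,k)$, concatenating spider-move sequences that realize each of these $k$ one-step homotopies reduces the problem to the following claim: any one-step homotopy $H\colon X \times I_1 \to Y$ from $f$ to $g$ is realized by a finite sequence of spider moves.

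To prove the claim, enumerate the (finitely many) points of $X$ as $x_1,\dots,x_m$ and define $h_i\colon X \to Y$ for $0 \le i \le m$ by $h_i(x_j) = g(x_j)$ if $j \le i$ and $h_i(x_j) = f(x_j)$ if $j > i$, so that $h_0 = f$, $h_m = g$, and $h_i$ and $h_{i+1}$ agree except possibly at $x_{i+1}$. Everything now follows from a single case analysis: for adjacent $x, x' \in X$, consider whether each of $x, x'$ has index $\le i$ or $> i$. When both indices lie on the same side of $i$, the needed adjacency $h_i(x) \sim h_i(x')$ in $Y$ is supplied by continuity of $g$ or of $f$; when they lie on opposite sides, it is supplied by continuity of $H$, since $(x,1) \sim (x',0)$ in $X \times I_1$. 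This shows each $h_i$ is continuous. The same analysis, applied with $x = x' = x_{i+1}$ (using reflexivity) and with $x = x_{i+1}$, $x' \sim x_{i+1}$, shows that $g(x_{i+1})$ is adjacent to $f(x_{i+1}) = h_i(x_{i+1})$ and to $h_i(x')$ for every neighbour $x'$ of $x_{i+1}$ — exactly the hypotheses of Lemma~\ref{lem: spider move} for changing the value at $x_{i+1}$ from $f(x_{i+1})$ to $g(x_{i+1})$. Hence $h_i$ and $h_{i+1}$ are related by a spider move (or are equal, when $f(x_{i+1}) = g(x_{i+1})$), and deleting the trivial steps from $h_0, h_1, \dots, h_m$ yields the desired finite sequence of spider moves from $f$ to $g$.

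The only real obstacle is keeping the bookkeeping of this case analysis straight — in particular, recognizing that the ``mixed'' adjacencies (one endpoint already relabelled to its $g$-value, the other still at its $f$-value) are precisely what the one-step homotopy condition provides, which is exactly why a general homotopy must first be cut into one-step pieces. Everything else is formal, and since $X$ is finite and $k$ finite, the resulting sequence of spider moves is finite (of length at most $km$).
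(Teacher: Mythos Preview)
The paper does not give its own proof of this theorem: it is stated with a \qed\ and attributed to \cite[Proposition~4.4]{C-S21} and \cite[Theorem~3.2]{St21}. Your argument is correct and is essentially the standard one found in those references --- decompose an arbitrary homotopy into one-step homotopies, then realize each one-step homotopy by changing values one point at a time, using the characterization of Lemma~\ref{onesteplem} to guarantee that every intermediate map $h_i$ is continuous and that each transition $h_i \to h_{i+1}$ satisfies the hypotheses of Lemma~\ref{lem: spider move}. The finiteness of $X$ (digital images are finite by definition) is what makes the enumeration and the resulting sequence finite, and you invoke it at the right moment.
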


For positive integers $m,n$ and a pointed digital image $(X,x_0)$, we will consider continuous maps of pairs $f\colon (I_{m,n},\partial I_{m,n}) \to (X,x_0)$. That is, continuous maps $f$ with $f (\partial I_{m,n}) = \{x_0\}$. 

It is often convenient to visualize a function $f:(I_{m,n},\partial I_{m,n}) \to (X,x_0)$ as a labeling of the points of the rectangle $I_{m,n}$ with labels taken from the set $X$. For example a function $f:(I_{4,4},\partial I_{4,4}) \to (X,x_0)$ would be represented by the labeled rectangle in Figure \ref{labelingfig}. For simplicity in our pictures, we will indicate the label of the basepoint with a dot.

\begin{figure}
\[
\begin{tikzpicture}[scale=.5,every node/.style={scale=.7}]
\draw[thick, shift={(-.5,-.5)}] (0,0) rectangle (5,5);
\draw[densely dotted, shift={(-.5,-.5)}] (0,0) grid (5,5);
\node at (1,3) {$x_1$};
\node at (1,2) {$x_4$};
\node at (1,1) {$x_7$};
\node at (2,1) {$x_8$};
\node at (3,1) {$x_9$};
\node at (3,2) {$x_6$};
\node at (3,3) {$x_3$};
\node at (2,3) {$x_2$};
\node at (2,2) {$x_5$};
\foreach \x in {0,...,4} {
 \node at (\x,0) {.};
 \node at (\x,4) {.};
 \node at (0,\x) {.};
 \node at (4,\x) {.};
}
\end{tikzpicture}
\]
\caption{Representation of a typical map $f:(I_{4,4}, \partial I_{4,4}) \to (X,x_0)$ for $x_i\in X$. Each pixel is labeled with its function value, so that e.g.\ $f(2,1) = x_8\in X$. Values representing the base point $x_0$ are labeled with a dot.\label{labelingfig}}
\end{figure}

Our second homotopy group is modeled on homotopy classes of maps of pairs $(I_{m,n},\partial I_{m,n}) \to (X,x_0)$, where the homotopies preserve values at the boundary in the following sense:

Given $f,g\colon (I_{m,n},\partial I_{m,n}) \to (X,x_0)$ a homotopy $H\colon I_{m,n}\times I_k \to X$, we say $H$ is a \emph{homotopy relative to the boundary} when $H(\partial I_{m,n} \times I_k) = \{x_0\}$.

In our development, we often encounter a situation in which we have a ``local" homotopy that only involves values of a map in some part of the rectangle $I_{m,n}$.  If such a homotopy is stationary on the boundary of a subrectangle, then it may be extended to a homotopy of the whole rectangle in an obvious way.  

\begin{lem}\label{localized homotopy}
Let  $R$ be a subrectangle of $I_{m,n}$. Let $f:(I_{m,n}, \partial I_{m,n}) \to (X, x_0)$ be a map for which $f_R$, the restriction of $f$ to the subrectangle $R$, is  a map of pairs $f_R: (R, \partial R) \to (X, x_0)$. Let $g:(R, \partial R) \to (X, x_0)$ be a map on the subrectangle such that $f_R\simeq g$ by a homotopy that is stationary on the boundary $\partial R$.
Then the map $A: (I_{m,n}, \partial I_{m,n}) \to (X, x_0)$ defined by:
\[ A(a, b) = \begin{cases} g(a, b) &\text{ if } (a, b) \in R \\
f(a, b) &\text{ if } (a, b)\not \in R \\
\end{cases} \]
is continuous and we have $f \simeq A$ by a homotopy relative to the boundary.
\end{lem}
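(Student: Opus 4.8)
The plan is to build the homotopy $f \simeq A$ directly, by extending the given local homotopy over the rest of the rectangle using the constant homotopy on $f$. Write $R = [a_1,a_2]_\Z \times [b_1,b_2]_\Z \subseteq I_{m,n}$, and let $H_R\colon R \times I_k \to X$ be the given homotopy, so that $H_R(\cdot,0)=f_R$, $H_R(\cdot,k)=g$, and $H_R(p,t)=f_R(p)=x_0$ for every $p\in\partial R$ and $t\in I_k$ (using that $H_R$ is stationary on $\partial R$ and that $f_R$ is a map of pairs). Define $\bar H\colon I_{m,n}\times I_k \to X$ by $\bar H(a,b,t) = H_R(a,b,t)$ when $(a,b)\in R$, and $\bar H(a,b,t)=f(a,b)$ otherwise. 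This is well defined by the clean case split, and by construction $\bar H(\cdot,\cdot,0)=f$ and $\bar H(\cdot,\cdot,k)=A$; in particular, once we know $\bar H$ is continuous, restricting it to the slice $I_{m,n}\times\{k\}$ shows both that $A$ is well defined as claimed and that $A$ is continuous.

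The crux is continuity of $\bar H$. Suppose $(a,b,t)\sim(a',b',t')$ in $I_{m,n}\times I_k$; equivalently, $(a,b)\sim(a',b')$ in $I_{m,n}$ and $|t-t'|\le 1$. We must check $\bar H(a,b,t)\sim\bar H(a',b',t')$. If both $(a,b)$ and $(a',b')$ lie in $R$, this is continuity of $H_R$, since the adjacency on $R\times I_k$ is the restriction of the one on $I_{m,n}\times I_k$. If both lie outside $R$, it is continuity of $f$. The only subtle case is the ``mixed'' one, and the geometric observation that makes it work is this: a point of $R$ that is adjacent to a point of $I_{m,n}\setminus R$ must lie in $\partial R$, because a point in the interior of $R$ has all of its lattice neighbors inside $R$. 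So if, say, $(a,b)\in R$ and $(a',b')\notin R$, then $(a,b)\in\partial R$, hence $\bar H(a,b,t)=x_0=f(a,b)$, and continuity of $f$ gives $f(a,b)\sim f(a',b')=\bar H(a',b',t')$. The subcase $(a,b)\notin R$, $(a',b')\in R$ is symmetric.

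It remains to verify that $\bar H$ is a homotopy relative to $\partial I_{m,n}$, i.e.\ that $\bar H(a,b,t)=x_0$ whenever $(a,b)\in\partial I_{m,n}$. If $(a,b)\notin R$ this is immediate from $f$ being a map of pairs. If $(a,b)\in R$, one observes that $\partial I_{m,n}\cap R\subseteq\partial R$: if a point of $R$ has a coordinate equal to $0$, $m$, or $n$, then since $R\subseteq I_{m,n}$ that coordinate is forced to be an endpoint of the corresponding side of $R$. Hence $(a,b)\in\partial R$ and $\bar H(a,b,t)=x_0$ as above; the same computation at $t=k$ confirms $A$ is a map of pairs. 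Thus $\bar H$ exhibits $f\simeq A$ relative to the boundary. I expect the mixed-case continuity check to be the only genuine content—specifically, spotting that the offending point of $R$ necessarily lies on $\partial R$, where the local homotopy already agrees with $f$ (and with $x_0$)—while the rest is bookkeeping.
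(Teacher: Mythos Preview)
Your proof is correct and follows essentially the same approach as the paper: both extend the local homotopy by the constant homotopy on $f$ outside $R$, and both hinge on the observation that $\partial R$ separates interior from exterior (equivalently, that any point of $R$ adjacent to a point outside $R$ lies on $\partial R$). Your only cosmetic difference is that you obtain continuity of $A$ as the $t=k$ slice of $\bar H$ rather than proving it first as a separate step.
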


\begin{proof}
Continuity of the map $A$ is assured because the maps $f$ and $g$ agree on the boundary of the rectangle $\partial R$, which separates $I_{m,n}$ into non-adjacent interior and exterior:  A point interior to the rectangle $R$ cannot be adjacent to a point exterior to the rectangle $R$.  Thus, $A(a, b) \sim A(a', b')$ follows from continuity of $g$ for points $(a, b) \sim (a', b')$ in $R$ (including its boundary), and from continuity of $f$ for points $(a, b) \sim (a', b')$ in $R^C \cup \partial R$. (Here $R^C$ denotes the complement of $R$ in $I_{m,n}$.)

Suppose $G\colon R \times I_k \to X$ is the homotopy---stationary on $\partial R$---from $f_R$ to $g$.  Then the homotopy $H: I_{m,n}\times I_k \to X$ defined by
\[ H\left( (a, b), t \right) = \begin{cases} G((a, b),t) &\text{ if } (a, b) \in R \\
f(a, b) &\text{ if } (a, b) \not \in R \\
\end{cases} \]
starts at $f$ and ends at $A$.  This homotopy is continuous by reasoning similar to that of the first part.  Namely, $\partial R \times I_k$ separates $I_{m,n} \times I_k$ into non-adjacent interior and exterior.  Then
$H\left( (a, b), t \right) \sim H\left( (a', b'), t' \right)$ follows from continuity of $G$ for points $\left( (a, b), t \right) \sim \left( (a', b'), t' \right)$ in $R\times I_k$ and from continuity of $f$ for points $\left( (a, b), t \right) \sim \left( (a', b'), t' \right)$ in $(R^C \cup \partial R)\times I_k$.
\end{proof}

Boxer's definition of the fundamental group in \cite{Bo99} uses a construction which he calls \emph{trivial extension} of a loop. We adapt this concept for our higher dimensional setting by simply repeating values of the base point.

We define \emph{trivial extensions} of maps $(I_{m,n},\partial I_{m,n}) \to (X,x_0)$ as follows: if $m' \ge m$ and $n'\ge n$, then there is a natural inclusion $I_{m,n} \subseteq I_{m',n'}$. We say 
$\bar f\colon (I_{m',n'},\partial I_{m',n'}) \to (X,x_0)$ is a trivial extension of $f \colon (I_{m,n},\partial I_{m,n}) \to (X,x_0)$ when
\[ \bar f(x) = \begin{cases}
f(x) & \text{ if $x\in I_{m,n}$,} \\
x_0 & \text{ otherwise.}
\end{cases} \]
See Figure~\ref{trivialfig} for a pictorial representation of a trivial extension.

\begin{figure}
\[
f:
\vcenter{\hbox{
\begin{tikzpicture}[scale=.4]
\foreach \x in {0,...,4} {
 \foreach \y in {0,...,4} {
   \node at (\x+.5,\y+.5) {.};
  }
 }
\draw[thick] (0,0) rectangle (5,5);
\draw[densely dotted] (0,0) grid (5,5);
\draw[fill=white] (1,1) rectangle (4,4);
\node at (2.5,2.5) {$*$};
\end{tikzpicture}
}}
\qquad
\bar f:
\vcenter{\hbox{
\begin{tikzpicture}[scale=.4]
\foreach \x in {0,...,8} {
 \foreach \y in {0,...,5} {
   \node at (\x+.5,\y+.5) {.};
  }
 }
\draw[thick] (0,0) rectangle (9,6);
\draw[densely dotted] (0,0) grid (9,6);
\draw[fill=white] (1,1) rectangle (4,4);
\node at (2.5,2.5) {$*$};
\end{tikzpicture}
}}
\]
\caption{Schematic of a map $f:I_{4,4} \to X$ with its trivial extension $\bar f \colon I_{8,5} \to X$.\label{trivialfig}}
\end{figure}

Whereas homotopy is a relation between maps with the same domain, we will need to compare maps whose domains are differently sized rectangles.  We do this through the following device. 

\begin{defn}\label{def: extn homotopy}
Given two maps $f:(I_{m,n},\partial I_{m,n}) \to (X,x_0)$ and $g: (I_{m',n'},\partial I_{m',n'}) \to (X,x_0)$, we write $f \approx g$, and say that $f$ and $g$ are \emph{extension-homotopic}, when there exist $\bar m \ge \max(m,m')$ and $\bar n \ge \max(n,n')$ and $\bar f, \bar g: I_{\bar m, \bar n} \to X$ with $\bar f$ a trivial extension of $f$ and $\bar g$ a trivial extension of $g$ and $\bar f$ homotopic to $\bar g$ by a  homotopy relative to the boundary.
\end{defn}

\begin{thm}\label{thm: extn-htpy equiv}
Extension homotopy of maps is an equivalence relation on the set of maps $(I_{m,n},\partial I_{m,n}) \to (X,x_0)$ for all sizes of rectangles.
\end{thm}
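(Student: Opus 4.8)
The plan is to verify the three defining properties of an equivalence relation for $\approx$ — reflexivity, symmetry, transitivity — with only transitivity requiring real work. Throughout I will use that, for maps with a \emph{fixed} rectangular domain, homotopy relative to the boundary is itself an equivalence relation: reflexivity via the constant homotopy, symmetry by reversing a homotopy ($H'((a,b),t)=H((a,b),k-t)$), and transitivity by concatenating homotopies, each operation visibly preserving the condition that the boundary is sent to $x_0$ at all times (the argument is the same as the one cited after Definition~\ref{homotopydef} for ordinary homotopy). Granting this, reflexivity of $\approx$ is immediate: the trivial extension of $f\colon(I_{m,n},\partial I_{m,n})\to(X,x_0)$ to $I_{m,n}$ itself is $f$, and $f\simeq f$ rel boundary. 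Symmetry of $\approx$ follows because if $\bar f,\bar g\colon I_{\bar m,\bar n}\to X$ are trivial extensions of $f$ and $g$ with $\bar f\simeq\bar g$ rel boundary, then the same rectangle and the same extensions witness $g\approx f$ via the reversed homotopy.

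For transitivity, suppose $f\approx g$ and $g\approx h$. Fix $\bar m_1,\bar n_1$ and trivial extensions $\bar f$ of $f$, $\bar g_1$ of $g$ on $I_{\bar m_1,\bar n_1}$ with $\bar f\simeq\bar g_1$ rel boundary, and fix $\bar m_2,\bar n_2$ and trivial extensions $\bar g_2$ of $g$, $\bar h$ of $h$ on $I_{\bar m_2,\bar n_2}$ with $\bar g_2\simeq\bar h$ rel boundary. Set $\bar m=\max(\bar m_1,\bar m_2)$ and $\bar n=\max(\bar n_1,\bar n_2)$. The argument rests on two elementary facts. First: a trivial extension of a trivial extension is again a trivial extension, and the trivial extension of a given map to a given rectangle is unique — both are immediate from the defining formula. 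Second: if $\bar u\simeq\bar v$ rel boundary as maps of pairs on a subrectangle $R\subseteq I_{\bar m,\bar n}$, then the trivial extensions of $\bar u$ and $\bar v$ to $I_{\bar m,\bar n}$ are homotopic rel boundary. This second fact is exactly Lemma~\ref{localized homotopy} applied with the subrectangle $R$: the "rel boundary" homotopy is stationary on $\partial R$ since $\bar u,\bar v$ send $\partial R$ to $x_0$, and the map $A$ produced by that lemma — being $x_0$ outside $R$ and $\bar v$ on $R$ — is precisely the trivial extension of $\bar v$.

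Combining these: trivially extend all four maps to $I_{\bar m,\bar n}$, obtaining $\bar F,\bar G_1,\bar G_2,\bar H$; by the first fact $\bar F$ is a trivial extension of $f$, $\bar H$ of $h$, and $\bar G_1=\bar G_2$ since both are the \emph{unique} trivial extension of $g$ to $I_{\bar m,\bar n}$. By the second fact, $\bar F\simeq\bar G_1$ rel boundary and $\bar G_1=\bar G_2\simeq\bar H$ rel boundary, so by transitivity of homotopy rel boundary on the fixed domain $I_{\bar m,\bar n}$ we get $\bar F\simeq\bar H$ rel boundary; noting $\bar m\ge\max(m,m_h)$ and $\bar n\ge\max(n,n_h)$, where $I_{m,n}$ and $I_{m_h,n_h}$ are the domains of $f$ and $h$, this exhibits $f\approx h$. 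I expect the only genuine obstacle to be the continuity of a trivially extended homotopy — that setting a homotopy equal to $x_0$ outside a subrectangle keeps it continuous — but this is exactly what Lemma~\ref{localized homotopy} delivers, using that $\partial R$ separates the ambient rectangle into non-adjacent interior and exterior, so no new argument is needed and the proof reduces to bookkeeping with the two facts above.
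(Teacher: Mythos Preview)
Your proof is correct and follows essentially the same strategy as the paper: reduce to transitivity, pass to trivial extensions on a common large rectangle $I_{\bar m,\bar n}$, observe that the two trivial extensions of $g$ coincide there, and conclude by transitivity of homotopy rel boundary on a fixed domain. The only difference is in how the key step---that a homotopy rel boundary survives trivial extension---is justified: the paper isolates this as a separate Lemma~\ref{lem: t.e. htpy} (extending the homotopy by the constant value $x_0$ outside the smaller rectangle), whereas you obtain it as a special case of the already-available Lemma~\ref{localized homotopy} with $R$ the smaller rectangle and the ambient map constant at $x_0$ outside $R$. Both routes amount to the same continuity check, so this is a cosmetic rather than a substantive difference.
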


Reflexivity and symmetry follow immediately because homotopy (relative the boundary) of maps is an equivalence relation. Transitivity is a consequence of the following simple lemma: 

\begin{lem}\label{lem: t.e. htpy}
Suppose we have maps $f \simeq g \colon (I_{m,n},\partial I_{m,n}) \to (X,x_0)$ homotopic relative to the boundary.  Let $f',  g' \colon (I_{m',n'},\partial I_{m',n'}) \to (X,x_0)$ be trivial extensions of $f$ and $g$ to the same-sized rectangle, for $m' \geq m$ and $n' \geq n$.  Then we have $f' \simeq g'$.  
\end{lem}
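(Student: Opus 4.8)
The plan is to reduce directly to Lemma~\ref{localized homotopy}. Regard $I_{m,n}$ as a subrectangle $R$ of the larger rectangle $I_{m',n'}$ via the standard corner inclusion, so that $\partial R = \partial I_{m,n}$ sits inside $I_{m',n'}$ (with part of $\partial R$ lying along $\partial I_{m',n'}$, which causes no difficulty). By construction the trivial extension $f'\colon (I_{m',n'},\partial I_{m',n'}) \to (X,x_0)$ restricts on $R$ to the original map $f$, which is a map of pairs $(R,\partial R)\to (X,x_0)$; thus $f'$ satisfies the hypothesis of Lemma~\ref{localized homotopy} with respect to $R$.

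First I would invoke the given homotopy $H\colon I_{m,n}\times I_k \to X$ from $f$ to $g$ relative to $\partial I_{m,n}$. Reading $H$ as a homotopy on $R$ that is stationary on $\partial R$ and joins $f'_R = f$ to $g$, Lemma~\ref{localized homotopy} produces a map $A\colon (I_{m',n'},\partial I_{m',n'})\to (X,x_0)$ with $A \simeq f'$ by a homotopy relative to the boundary, where $A$ equals $g$ on $R$ and equals $f'$ on $R^C$. Next I would simply identify $A$ with $g'$: on $R=I_{m,n}$ we have $A = g$, while outside $R$ we have $A = f' = x_0$ (since $f'$ is a trivial extension), and this is exactly the defining formula for the trivial extension $g'$ of $g$. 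Hence $f' \simeq A = g'$ relative to the boundary, as required.

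There is essentially no hard step here; the only point to be careful about is that placing $I_{m,n}$ in the corner of $I_{m',n'}$ still yields a subrectangle whose boundary $\partial R$ separates its interior from its exterior in $I_{m',n'}$ — a point of $I_{m',n'}$ not in $R$ cannot be $c_2$-adjacent to a point of the (open) interior of $R$ — so that Lemma~\ref{localized homotopy} genuinely applies. Alternatively, and just as easily, one can bypass Lemma~\ref{localized homotopy} and extend $H$ by hand to $H'\colon I_{m',n'}\times I_k \to X$, setting $H' = x_0$ off $I_{m,n}\times I_k$; continuity of $H'$ follows because $H$ takes the value $x_0$ on $\partial I_{m,n}\times I_k$ (the homotopy being relative to the boundary) and interior points of $I_{m,n}$ have no neighbours outside $I_{m,n}$, after which one checks directly that $H'$ runs from $f'$ to $g'$.
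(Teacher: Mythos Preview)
Your proof is correct and essentially matches the paper's. The paper takes the direct route you sketch in your last paragraph—extending $H$ to $\overline H$ on $I_{m',n'}\times I_k$ by setting it constantly $x_0$ off $I_{m,n}$, with continuity following since interior points of $I_{m,n}$ have no neighbours outside $I_{m,n}$—while your primary approach simply packages that same continuity check inside Lemma~\ref{localized homotopy}.
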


\begin{proof}
The proof is fairly obvious.  A homotopy $H \colon I_{m,n}\times I_{k} \to X$ from $f$ to $g$ relative the boundary extends to a homotopy $\overline{H} \colon I_{m',n'}\times I_{k} \to X$ from $f'$ to $g'$ relative the boundary, by setting $\overline{H}$ to be stationary at $x_0$ on all points of $I_{m',n'}$ not in $I_{m,n}$. This extension $\overline{H}$ is easily seen to be a continuous map on $I_{m',n'}\times I_{k}$ since a point of $I_{m',n'}$ not in $I_{m,n}$ cannot be adjacent to a point in the interior of $I_{m,n}$, and the original $H$ is already stationary at $x_0$ on all points of $\partial I_{m, n}$.
\end{proof}

\begin{proof}[Proof of Theorem~\ref{thm: extn-htpy equiv}]
As observed above, we only need show transitivity.  So, suppose we have maps $f_t\colon (I_{m_t,n_t}, \partial I_{m_t,n_t})\to (X,x_0)$ for $t = 1, 2, 3$ and that $f_1\approx f_2 \approx f_3$.  
Since $f_1\approx f_2$ there are $m'\geq \max\{m_1,m_2\}$ and $n'\geq \max\{n_1,n_2\}$ along with trivial extensions $f_1', f_2'\colon I_{m',n'}\to X$ of $f_1$ and $f_2$ and a homotopy from $f_1'$ to $f_2'$. Similarly, since $f_2\approx f_3$ there are $m''\geq \max\{m_2,m_3\}$ and $n''\geq \max\{n_2,n_3\}$ along with trivial extensions $f_2'', f_3''\colon I_{m'',n''}\to X$ of $f_2$ and $f_3$ and a homotopy from $f_2''$ to $f_3''$.

Let $\bar{m}=\max\{m',m''\}$ and $\bar{n}=\max\{n',n''\}$, and let $\bar{f_1}, \bar{f_2}, \bar{f_3}\colon I_{\bar{m},\bar{n}}\to X$ be trivial extensions of $f_1$, $f_2$ and $f_3$ respectively.  Then $\bar{f_1}$ and $\bar{f_3}$ are trivial extensions of $f_1'$ and $f_3''$, respectively, whilst $\bar{f_2}$ is a common trivial extension of both $f_2'$ and $f_2''$.  Hence we have $\bar{f_1} \simeq \bar{f_2}$ and $\bar{f_2} \simeq \bar{f_3}$ by Lemma~\ref{lem: t.e. htpy}.  Since homotopy is transitive, we have $\bar{f_1} \simeq \bar{f_3}$ and the result follows.
\end{proof}

\section{Column-Doubling and Row-Doubling}\label{sec: subd and doubling}
In this section we define column- and row-doubling operators and subdivisions of a digital map, and show their relations to trivial extensions.

\begin{defn}\label{def: alpha beta}
Given $i \in \{0,\dots, m\}$ and $j \in \{0,\dots, n\}$, let $\alpha_i: I_{m+1,n} \to I_{m,n}$ and $\beta_j: I_{m,n+1}\to I_{m,n}$ be the maps defined by
\[ \alpha_i(a,b) = \begin{cases} (a,b) &\text{ if $a\le i$}, \\
(a-1,b) &\text{ if $a> i$}.
\end{cases} \qquad
\beta_j(a,b) = \begin{cases} (a,b) &\text{ if $b\le j$}, \\
(a,b-1) &\text{ if $b> j$}.
\end{cases} 
\]
The map $\alpha_i$ simply omits one column of the domain, so the composition $f\circ \alpha_i: I_{m+1,n} \to X$ is a map which resembles $f$, but with column $i$ repeated once and all following columns shifted to the right by one position. Namely, viewing $f$ as a labeling of the points of $I_{m, n}$ with values from $X$, we have \emph{doubled the $i$th column} of labeled values to result in a similar labeling of $I_{m+1, n}$ for $f \circ \alpha_i$.  Likewise, $f\circ \beta_j$ is a map which resembles $f$, but with row $j$ doubled.
\end{defn}

\begin{thm}\label{columndoublehomotopy}
Let $f:(I_{m,n}, \partial I_{m,n}) \to (X,x_0)$ be continuous. We have homotopies relative the boundary $f \circ \alpha_m \simeq f \circ \alpha_{m-1} \simeq \cdots \simeq f \circ \alpha_0$ and $f \circ \beta_n \simeq f \circ \beta_{n-1} \simeq \cdots \simeq f \circ \beta_0$.  Consequently, we have
$f \approx f \circ \alpha_i$ for each $i \in \{0, \ldots, m\}$ and $f \approx f\circ \beta_j$ for each $j \in \{0, \ldots, n\}$.
\end{thm}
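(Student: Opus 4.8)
The plan is to establish the single-step homotopies $f\circ\alpha_i \simeq f\circ\alpha_{i-1}$ relative the boundary for each $i\in\{1,\dots,m\}$ (and the analogous statements for the $\beta_j$), chain them together, and then deduce the ``consequently'' clause by recognizing one endpoint of the chain as a trivial extension of $f$ and invoking transitivity of extension-homotopy (Theorem~\ref{thm: extn-htpy equiv}).

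First I would compare $g := f\circ\alpha_i$ and $h := f\circ\alpha_{i-1}$ directly as labelings of $I_{m+1,n}$. Unwinding the formula for $\alpha_i$, both maps send $(a,b)\mapsto f(a,b)$ on columns $a\le i-1$ and $(a,b)\mapsto f(a-1,b)$ on columns $a\ge i+1$; they differ only on column $a=i$, where $g(i,b)=f(i,b)$ while $h(i,b)=f(i-1,b)$. The core of the argument is to verify the hypothesis of Lemma~\ref{onesteplem}: whenever $(a,b)\sim(a',b')$ in $I_{m+1,n}$ one has $g(a,b)\sim h(a',b')$ in $X$. Since adjacency in $I_{m+1,n}$ forces $|a-a'|\le 1$ and $|b-b'|\le 1$, this reduces to a short case analysis on the positions of $a$ and $a'$ relative to the doubled column $i$; in each case the two points are carried (after the relevant shift) to a pair of points of $I_{m,n}$ whose coordinates differ by at most $1$, so the required adjacency in $X$ follows from continuity of $f$. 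Lemma~\ref{onesteplem} then produces a one-step homotopy $H\colon I_{m+1,n}\times I_1\to X$ from $g$ to $h$. That $H$ is relative the boundary is easy: the formula for $\alpha_i$ gives $\alpha_i(\partial I_{m+1,n})\subseteq \partial I_{m,n}$, so $g$ and $h$ are genuine maps of pairs into $(X,x_0)$, and since $I_1=\{0,1\}$ the homotopy $H$ takes only the values $g$ and $h$, hence sends $\partial I_{m+1,n}\times I_1$ to $x_0$. Concatenating these homotopies (transitivity of homotopy relative the boundary holds for the usual reason) yields the chain $f\circ\alpha_m\simeq\cdots\simeq f\circ\alpha_0$, and the same argument with rows in place of columns gives $f\circ\beta_n\simeq\cdots\simeq f\circ\beta_0$.

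For the final assertions, I would observe that $f\circ\alpha_m$ is precisely the trivial extension of $f$ to $I_{m+1,n}$: on the new last column $a=m+1$ it takes the value $f(m,b)$, and $(m,b)\in\partial I_{m,n}$ forces $f(m,b)=x_0$, matching the defining formula of a trivial extension. Hence $f\approx f\circ\alpha_m$ straight from Definition~\ref{def: extn homotopy}, while the chain gives $f\circ\alpha_m\simeq f\circ\alpha_i$ relative the boundary and therefore $f\circ\alpha_m\approx f\circ\alpha_i$. Transitivity of $\approx$ (Theorem~\ref{thm: extn-htpy equiv}) then yields $f\approx f\circ\alpha_i$ for each $i$, and likewise $f\approx f\circ\beta_j$ (using that $f\circ\beta_n$ is the trivial extension of $f$ to $I_{m,n+1}$).

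I expect the only real obstacle to be the case check verifying the hypothesis of Lemma~\ref{onesteplem} near the doubled column: one must separately treat the pairs $(a,a')$ with $a,a'\le i-1$; with $a,a'\ge i+1$; and with $a$ or $a'$ equal to $i$ (the remaining possibility $a\le i-1$, $a'\ge i+1$ being vacuous since then $|a-a'|\ge 2$). Each case is routine once set up correctly, and everything else in the proof is bookkeeping with the definitions of $\alpha_i$, $\beta_j$, trivial extension, and extension-homotopy.
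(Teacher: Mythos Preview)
Your proposal is correct and follows essentially the same route as the paper's proof: both verify the one-step homotopy $f\circ\alpha_i\simeq f\circ\alpha_{i-1}$ via the criterion of Lemma~\ref{onesteplem} by a case analysis near the doubled column, identify $f\circ\alpha_m$ as the trivial extension of $f$, and chain the homotopies to deduce the extension-homotopy conclusion. Your write-up is slightly more explicit than the paper's about the homotopy being relative the boundary and about invoking transitivity of $\approx$, but the argument is the same.
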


\begin{proof}
We will prove the statement for the $f\circ \alpha_i$. The proof for the $f\circ \beta_j$ is similar and we omit it.
Let $\bar f: I_{m+1,n} \to X$ be the trivial extension of $f$, which we may write as $\bar f = f\circ \alpha_m$.  We will show that  $f\circ \alpha_i \simeq f\circ \alpha_{i-1}$ for each $i \in \{1, \ldots, m\}$ and it follows that we have $f \approx f \circ \alpha_i$ for each $i$.

Indeed, we will use the criterion of Lemma \ref{onesteplem} to show $f\circ \alpha_i$ and $f\circ \alpha_{i-1}$ are one-step homotopic.  For $x \sim x' \in I_{m+1,n}$, we must show that 
$f\circ \alpha_i(x) \sim f\circ \alpha_{i-1}(x')$ in $X$.  Notice that $f\circ \alpha_i$ and $f\circ \alpha_{i-1}$ agree in value except at points $(i, b) \in I_{m+1,n}$: Unless one of $x$ or $x'$ has coordinates $(i, b)$ for some $b$, the desired conclusion follows from continuity of either map $f\circ \alpha_i$ or  $f\circ \alpha_{i-1}$. So, assume $x = (i, b)$ and $x' = (a', b')$ with $i \sim a'$ (and thus $a' \in \{ i-1, i, i+1\}$) and $b \sim b'$.  

If $a' = i-1$, we may use continuity of $f$ and the definitions of $f\circ \alpha_i$ and $f\circ \alpha_{i-1}$ to write
$$f\circ \alpha_i(x) = f\circ \alpha_i(i, b) = f(i, b) \sim f(i-1, b') = f\circ \alpha_{i-1}(i-1, b') = f\circ \alpha_{i-1}(x').$$
Similarly, if $a' = i$, we may write
$$f\circ \alpha_i(x) = f\circ \alpha_i(i, b) = f(i, j) \sim f(i-1, b') = f\circ \alpha_{i-1}(i, b') = f\circ \alpha_{i-1}(x').$$
Finally, if $a' = i+1$, we may write
$$f\circ \alpha_i(x) = f\circ \alpha_i(i, b) = f(i, b) \sim f(i, b') = f\circ \alpha_{i-1}(i+1, b') = f\circ \alpha_{i-1}(x').$$
It follows from Lemma \ref{onesteplem} that $f\circ \alpha_i$ and $f\circ \alpha_{i-1}$ are one-step homotopic, and the result follows. 
\end{proof}

Observe that, if $f\colon (I_{m,n}, \partial I_{m,n}) \to (X,x_0)$, then any trivial extension of $f$ may be obtained by repeatedly doubling the $m$th row and the $n$th column.
\begin{lem}\label{trivialdoubling}
Let $f\colon (I_{m,n}, \partial I_{m,n}) \to (X,x_0)$, and let $\bar f \colon (I_{r,s},\partial I_{r,s}) \to (X,x_0)$ be a trivial extension. Then we have
$$\bar f = f \circ \alpha^{r-m}_m \circ \beta^{s-n}_{n}.$$
Furthermore, in this expression the $r-m$ iterations of $\alpha_m$ and the $s-n$ iterations of $\beta_n$ may be shuffled amongst themselves in this expression (any order of these row- or column-doublings achieves the same effect).
\end{lem}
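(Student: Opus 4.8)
The plan is to reduce the statement to a bookkeeping argument about how the maps $\alpha_m$ and $\beta_n$ interact. The key observation is that trivial extension from $I_{m,n}$ to $I_{r,s}$ is characterized by two properties: it agrees with $f$ on $I_{m,n}$, and it is constant at $x_0$ elsewhere. So to prove $\bar f = f\circ \alpha_m^{r-m}\circ \beta_n^{s-n}$ it suffices to check that the right-hand side has these two properties. First I would compute the composite $\alpha_m^{r-m}\colon I_{r,n}\to I_{m,n}$ explicitly: iterating the definition of $\alpha_m$, one sees that $\alpha_m^{r-m}(a,b) = (a,b)$ when $a\le m$ and $\alpha_m^{r-m}(a,b) = (m,b)$ when $a > m$ (each application of $\alpha_m$ either fixes a point or decrements its first coordinate, and the ``decrement'' only applies while the first coordinate exceeds $m$, so after enough iterations everything to the right of column $m$ has been pushed down to column $m$). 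Similarly $\beta_n^{s-n}(a,b) = (a,b)$ for $b\le n$ and $(a,n)$ for $b>n$. Composing, $\alpha_m^{r-m}\circ\beta_n^{s-n}$ sends $(a,b)$ to $(\min(a,m),\min(b,n))$.

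With that formula in hand, the verification is immediate: if $(a,b)\in I_{m,n}$, i.e.\ $a\le m$ and $b\le n$, then $\alpha_m^{r-m}\circ\beta_n^{s-n}(a,b) = (a,b)$, so $f\circ\alpha_m^{r-m}\circ\beta_n^{s-n}(a,b) = f(a,b) = \bar f(a,b)$. If $(a,b)\notin I_{m,n}$, then either $a>m$ or $b>n$, so $\min(a,m)\in\{0,\dots,m\}$ and $\min(b,n)\in\{0,\dots,n\}$ with at least one coordinate equal to $m$ or $n$ respectively — in fact, whenever $a>m$ the image point has first coordinate exactly $m$, and since $(m, \cdot)$ and $(\cdot, n)$ lie on $\partial I_{m,n}$, we get $f\circ\alpha_m^{r-m}\circ\beta_n^{s-n}(a,b) = x_0 = \bar f(a,b)$. (One should note that when $a > m$ the point $(m, \min(b,n))$ is on the right edge of $I_{m,n}$, hence in $\partial I_{m,n}$; likewise when $b > n$ we land on the top edge. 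This uses that $f$ is a map of pairs carrying $\partial I_{m,n}$ to $x_0$.) This establishes the displayed equality.

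For the ``furthermore'' clause about shuffling, the cleanest route is to observe that $\alpha_m\colon I_{m+1,n}\to I_{m,n}$ and $\beta_n\colon I_{m,n+1}\to I_{m,n}$ act on independent coordinates, so at the level of the appropriate intermediate rectangles the identity $\alpha_m\circ\beta_n = \beta_n\circ\alpha_m$ holds (both send $(a,b)$ to $(\min(a,m),\min(b,n))$ on $I_{m+1,n+1}$), and likewise $\alpha_m$ commutes with itself and $\beta_n$ with itself in the obvious trivial sense. Hence any two words in the $\alpha_m$'s and $\beta_n$'s with the same number of each letter represent the same map between the same pair of rectangles; precomposing with $f$ then gives the same map. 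Alternatively, and perhaps more transparently, each such shuffled composite again has the two defining properties of the trivial extension $\bar f$ — agreeing with $f$ on $I_{m,n}$ and constant at $x_0$ off it — by the same $\min$-coordinate computation, so they all coincide with $\bar f$ regardless of order.

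I expect the only mildly delicate step to be the explicit computation of the iterated composite $\alpha_m^{r-m}$, specifically confirming that iterating a map which decrements the first coordinate only when it exceeds $m$ collapses everything past column $m$ onto column $m$ (rather than overshooting). This is a short induction on the number of iterations, and once it is recorded the rest is a routine case check against the definition of trivial extension.
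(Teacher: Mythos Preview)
Your proof is correct and follows essentially the same approach as the paper's, only worked out in more detail: the paper simply observes that each pre-composition with $\alpha_m$ or $\beta_n$ doubles the $m$th column or $n$th row, and since these consist entirely of $x_0$'s the result matches the trivial extension. Your explicit computation that $\alpha_m^{r-m}\circ\beta_n^{s-n}(a,b) = (\min(a,m),\min(b,n))$ is just a careful unpacking of the same observation, and your treatment of the shuffling clause (via commutativity on independent coordinates) is in fact more thorough than the paper's, which merely asserts it.
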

\begin{proof}
Each pre-composition with an $\alpha_m$ or a $\beta_n$ doubles the $n$th row or $m$th column of values of $f$. Since the $n$th row and $m$th column of $f$ are constant maps at $x_0$, this produces the same map as the trivial extension $\bar f$.
\end{proof}
 
\begin{figure}
\[
f:
\vcenter{\hbox{
\begin{tikzpicture}[scale=.4]
\foreach \x in {0,...,5} {
 \foreach \y in {0,...,4} {
   \node at (\x+.5,\y+.5) {.};
  }
 }
\draw[thick] (0,0) rectangle (6,5);
\draw[densely dotted] (0,0) grid (6,5);
\draw[fill=white] (1,1) rectangle (4,4);
\node at (2.5,2.5) {$*$};
\end{tikzpicture}
}}
\qquad
g:
\vcenter{\hbox{
\begin{tikzpicture}[scale=.4]
\foreach \x in {0,...,5} {
 \foreach \y in {0,...,4} {
   \node at (\x+.5,\y+.5) {.};
  }
 }
\draw[thick] (0,0) rectangle (6,5);
\draw[densely dotted] (0,0) grid (6,5);
\draw[fill=white] (2,1) rectangle (5,4);
\node at (3.5,2.5) {$*$};
\end{tikzpicture}
}}
\]
\caption{Schematic of maps $f$ and $g$ from Corollary \ref{shiftcor}: A horizontal shift by one unit. \label{shiftfig}}
\end{figure}

\begin{cor}\label{shiftcor}
Let $f, g \colon (I_{m,n}, \partial I_{m,n}) \to (X,x_0)$ be continuous maps with $f(m-1,b)=x_0$ and $g(1,b)=x_0$ for each $j \in I_n$, and $g(a,b) = f(a-1, b)$ for each $a \in\{ 2, \ldots, m-1\}$ and each $j \in I_n$. That is, 
the maps $f$ and $g$ are \emph{horizontal shifts} of one another, as illustrated in Figure \ref{shiftfig}. Then we have $f\simeq g$.
\end{cor}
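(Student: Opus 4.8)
The plan is to realize both $f$ and $g$ as column-doublings of a single smaller map, so that Theorem~\ref{columndoublehomotopy} finishes the job. Let $h\colon I_{m-1,n}\to X$ be the restriction of $f$ to the columns indexed $0,\dots,m-1$, that is, $h(a,b)=f(a,b)$ for $a\le m-1$. This $h$ is continuous (a restriction of a continuous map), and since $f(0,b)=x_0$, $f(m-1,b)=x_0$ (the hypothesis on $f$), and $f$ is stationary on the top and bottom rows, $h$ is in fact a map of pairs $h\colon(I_{m-1,n},\partial I_{m-1,n})\to(X,x_0)$. Unwinding the definition of $\alpha_{m-1}$, the precomposition $h\circ\alpha_{m-1}$ doubles the (constant) column $m-1$ of $h$, which merely appends a column of $x_0$'s; comparing columns, this is exactly $f$ (recall $f(m,b)=x_0$ on the boundary). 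So $f=h\circ\alpha_{m-1}$.

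Next I would check that the \emph{same} $h$ gives $g=h\circ\alpha_0$. Precomposition with $\alpha_0$ doubles the (constant) column $0$ of $h$, so $(h\circ\alpha_0)(a,b)=x_0$ for $a\in\{0,1\}$ and $(h\circ\alpha_0)(a,b)=h(a-1,b)=f(a-1,b)$ for $a\in\{2,\dots,m\}$. This matches $g$ coordinate-by-coordinate: $g(0,b)=g(1,b)=x_0$ (the boundary condition and the hypothesis on $g$), $g(a,b)=f(a-1,b)$ for $a\in\{2,\dots,m-1\}$ (the hypothesis relating $f$ and $g$), and $g(m,b)=x_0=f(m-1,b)=h(m-1,b)$ (boundary of $g$ together with the hypothesis on $f$). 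Hence $g=h\circ\alpha_0$.

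With these two identifications, Theorem~\ref{columndoublehomotopy} applied to $h$ yields $h\circ\alpha_{m-1}\simeq h\circ\alpha_{m-2}\simeq\cdots\simeq h\circ\alpha_0$ by homotopies relative to the boundary, and therefore $f=h\circ\alpha_{m-1}\simeq h\circ\alpha_0=g$. The only real work is the bookkeeping that isolates $h$ and verifies it is a bona fide map of pairs --- and this is precisely where the two triviality hypotheses $f(m-1,b)=x_0$ and $g(1,b)=x_0$ are indispensable. It is also worth a sentence checking the degenerate small cases: when $m\le 3$ the ``shift region'' $\{2,\dots,m-1\}$ is empty or a single column, but nothing in the argument changes.
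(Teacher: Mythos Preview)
Your proposal is correct and follows essentially the same approach as the paper: both identify $f$ as the trivial extension $h\circ\alpha_{m-1}$ and $g$ as $h\circ\alpha_0$ for the same $h\colon(I_{m-1,n},\partial I_{m-1,n})\to(X,x_0)$, then invoke Theorem~\ref{columndoublehomotopy}. The paper's proof is three terse sentences, and you have simply filled in the verification details (that $h$ is a map of pairs and that the two identifications hold column-by-column), which is entirely appropriate.
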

\begin{proof}
The assumption on $f$ means that $f$ is a trivial extension of some continuous map $h:I_{m-1,n} \to X$. In fact we have $f = h\circ \alpha_{m-1}$ and $g = h \circ \alpha_0$. By the proof of Theorem~\ref{columndoublehomotopy}, we have that $h\circ \alpha_{m-1} \simeq h\circ \alpha_0$, which implies that $f\simeq g$.
\end{proof}

\begin{rem}\label{shiftrem}
Repeated application of Corollary \ref{shiftcor} and Lemma~\ref{localized homotopy} allows for shifting horizontally up to homotopy, respectively extension homotopy, of a subrectangle of a function within a region of $I_{m,n}$ surrounded by basepoints.  A similar argument using row-doubling rather than column-doubling in Corollary~\ref{shiftcor} shows that the same is possible for vertical shifts. Combining these, we see that any ``translation'' of a subrectangle through a region of constant basepoint values will not change the homotopy class of a map.
 For example we may achieve up to homotopy any translation of the type appearing in Figure 4, where three blocks of values from $X$, each surrounded by basepoints, can be maneuvered into a different configuration within $I_{m,n}$.
\end{rem}

\begin{figure}
\[
\vcenter{\hbox{
\begin{tikzpicture}[scale=.5]
\draw[thick] (0,0) rectangle (11,9);
\draw[densely dotted] (0,0) grid (11,9);
\foreach \x in {.5,...,10.5} {
 \foreach \y in {.5,...,8.5} {
  \node[] at (\x,\y) {.};
  }
}
\draw[thick, fill=white] (1,1) rectangle (4,4) node[pos=.5] {$f_{R_1}$};
\draw[thick, fill=white] (3,5) rectangle (7,8) node[pos=.5] {$f_{R_2}$};
\draw[thick, fill=white] (7,1) rectangle (10,4) node[pos=.5] {$f_{R_3}$};
\end{tikzpicture}
}}
\simeq
\vcenter{\hbox{
\begin{tikzpicture}[scale=.5]
\draw[thick] (0,0) rectangle (11,9);
\draw[densely dotted] (0,0) grid (11,9);
\foreach \x in {.5,...,10.5} {
 \foreach \y in {.5,...,8.5} {
  \node[] at (\x,\y) {.};
  }
}
\draw[thick, fill=white] (1,5) rectangle (4,8) node[pos=.5] {$f_{R_1}$};
\draw[thick, fill=white] (5,5) rectangle (9,8) node[pos=.5] {$f_{R_2}$};
\draw[thick, fill=white] (1,1) rectangle (4,4) node[pos=.5] {$f_{R_3}$};
\end{tikzpicture}
}}
\]
\caption{Translations by homotopy of subrectangles surrounded by basepoint values illustrating Remark \ref{shiftrem}.\label{translationfig}}
\end{figure}

The following is a version of Lemma~\ref{localized homotopy} for ``local extension homotopy."

\begin{lem}\label{relativehomotopy}
Let  $R$ be a subrectangle of $I_{m,n}$. Let $f:(I_{m,n}, \partial I_{m,n}) \to (X, x_0)$ be a map for which $f_R$, the restriction of $f$ to the subrectangle $R$, is  a map of pairs $f_R: (R, \partial R) \to (X, x_0)$. Let $g:(R, \partial R) \to (X, x_0)$ be a map on the subrectangle such that $f_R\approx g$ by an extension homotopy in the sense of Definition~\ref{def: extn homotopy}.

Then the map $A: (I_{m,n}, \partial I_{m,n}) \to (X, x_0)$ defined by:
\[ A(x) = \begin{cases} f(x) &\text{ if } x\not \in R, \\
g(x) &\text{ if } x\in R \end{cases} \]
is continuous and we have $f \approx A$.
\end{lem}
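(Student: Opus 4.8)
The plan is to bootstrap from the already-proven ``localized homotopy'' lemma (Lemma~\ref{localized homotopy}) by first passing to a sufficiently large common rectangle on which the given extension homotopy $f_R \approx g$ becomes an honest homotopy relative to $\partial R$, and then applying Lemma~\ref{localized homotopy} inside that enlarged rectangle. The subtlety that must be handled carefully is that enlarging $R$ to a bigger subrectangle $\bar R$ (to accommodate the trivial extensions witnessing $f_R \approx g$) may push $\bar R$ outside of $I_{m,n}$, so we must simultaneously trivially extend $f$ on the ambient rectangle. Thus the proof is really a statement about extension-homotopy of $f$ and $A$, not a relative homotopy on a fixed rectangle, which matches the conclusion $f \approx A$ rather than $f \simeq A$.

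Here is how I would carry it out. First, unwind Definition~\ref{def: extn homotopy} for $f_R \approx g$: there are $\bar p \ge$ (the side lengths of $R$) and a rectangle $\bar R$ of those dimensions, with trivial extensions $\overline{f_R}, \bar g \colon \bar R \to X$ of $f_R$ and $g$ respectively, and a homotopy $\overline{f_R} \simeq \bar g$ relative to $\partial \bar R$. Now position $\bar R$ inside an enlarged ambient rectangle $I_{\bar m, \bar n}$ (with $\bar m \ge m$, $\bar n \ge n$) so that $\bar R$ contains the original $R$ in the natural corner-aligned way, and so that the ``new'' cells of $\bar R \setminus R$, as well as the new cells of $I_{\bar m,\bar n}\setminus I_{m,n}$, all lie in the region where the trivial extension of $f$ takes the value $x_0$. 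Since $R$ was a subrectangle of $I_{m,n}$ with $f_R$ mapping $\partial R$ to $x_0$, one checks that $f$ restricted to a neighborhood of $R$ already agrees with $x_0$ on $\partial R$, so the trivial extension $\bar f\colon I_{\bar m,\bar n}\to X$ of $f$ restricts on $\bar R$ to exactly the trivial extension $\overline{f_R}$ of $f_R$; this is the one bookkeeping point that needs a sentence of justification, and it uses that trivial extension only fills in basepoint values.

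With that setup, apply Lemma~\ref{localized homotopy} to $\bar f\colon (I_{\bar m,\bar n},\partial I_{\bar m,\bar n})\to (X,x_0)$, the subrectangle $\bar R$, and the map $\bar g\colon(\bar R,\partial\bar R)\to(X,x_0)$: since $\bar f_{\bar R} = \overline{f_R}\simeq \bar g$ by a homotopy stationary on $\partial \bar R$, the lemma produces a map $\bar A$, equal to $\bar g$ on $\bar R$ and to $\bar f$ off $\bar R$, with $\bar f \simeq \bar A$ by a homotopy relative to the boundary. It remains to observe that $\bar A$ is precisely a trivial extension of the map $A$ defined in the statement: off $R$, $\bar A$ equals $\bar f$, which equals $f$ on $I_{m,n}\setminus R$ and $x_0$ on the new cells; on $R$, $\bar A$ equals $\bar g$, which restricts to $g$ on $R$ and is $x_0$ on $\bar R\setminus R$. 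Hence $\bar f$ is a trivial extension of $f$, $\bar A$ is a trivial extension of $A$, and $\bar f \simeq \bar A$ relative to the boundary, so by Definition~\ref{def: extn homotopy} we conclude $f \approx A$.

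The main obstacle, and the only place demanding genuine care rather than routine verification, is the compatibility claim in the middle step: choosing the enlarged rectangle and the placement of $\bar R$ so that (a) $\bar R\cap I_{m,n}$ really is $R$ with the alignment making the restriction identifications work, and (b) the trivial extension of $f$ to $I_{\bar m,\bar n}$ restricts on $\bar R$ to the trivial extension of $f_R$ to $\bar R$. Both follow from the elementary geometry of nested corner-aligned rectangles together with the fact that $f$ sends $\partial R$ to $x_0$, but they are exactly the hypotheses one must not gloss over. Everything else is an invocation of Lemma~\ref{localized homotopy} and a rereading of the definition of trivial extension.
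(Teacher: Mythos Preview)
Your strategy has a genuine gap at exactly the point you flag as ``the main obstacle,'' and the justification you offer does not close it. Write $R=[r,s]_\Z\times[p,q]_\Z$ and $\bar R=[r,s+u]_\Z\times[p,q+v]_\Z$. The trivial extension $\bar f$ of $f$ to $I_{\bar m,\bar n}$ inserts the value $x_0$ only on $I_{\bar m,\bar n}\setminus I_{m,n}$, i.e.\ only along the top and right of the ambient rectangle. But the new cells of $\bar R\setminus R$ sit immediately to the right of and above $R$, and unless $R$ happens to have its upper-right corner at $(m,n)$ these cells lie \emph{inside} $I_{m,n}$, where $\bar f$ equals $f$ and may take arbitrary values. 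Knowing that $f(\partial R)=\{x_0\}$ tells you nothing about $f$ on points strictly outside $R$; so the claimed identification $\bar f|_{\bar R}=\overline{f_R}$ fails in general, and with it the invocation of Lemma~\ref{localized homotopy}.

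The paper's proof repairs precisely this. Rather than trivially extending $f$ (which doubles columns $m$ and rows $n$), it applies the column- and row-doubling operators $\alpha_s^u\circ\beta_q^v$ to the \emph{whole} ambient rectangle, doubling the columns and rows through the edge of $R$. Because $f$ is $x_0$ along $\partial R$, these inserted columns and rows carry $x_0$ in the strip $\bar R\setminus R$, while $f$'s values outside $R$ are simply shifted, not overwritten. This produces maps $f\circ\alpha_s^u\circ\beta_q^v$ and $A\circ\alpha_s^u\circ\beta_q^v$ on $I_{m+u,n+v}$ which now \emph{do} restrict correctly on $\bar R$, and the local homotopy $H_{\bar R}$ can be spliced in as you intended. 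The conclusion $f\approx A$ then comes from Theorem~\ref{columndoublehomotopy}, not from Definition~\ref{def: extn homotopy} directly. Your outline becomes correct if you replace ``trivial extension of $f$'' by ``$f\circ\alpha_s^u\circ\beta_q^v$'' throughout, but at that point you are carrying out the paper's argument.
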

\begin{proof}
Let $R = [r,s]_\Z \times [p,q]_\Z$. Since $f_R\approx g$, there is a larger rectangle $\bar R = [r,s+u]_\Z \times [p,q+v]_\Z$ with $u,v \ge 0$ and trivial extensions $\bar f_R, \bar g \colon \bar R \to X$ and a homotopy relative to the boundary $H_{\bar R}:\bar R\times I_k \to X$ from $\bar f_R$ to $\bar g$. By Lemma \ref{trivialdoubling} we have $\bar f_R = f_R \circ \alpha_s^u \circ \beta_q^v$ and $\bar g = g\circ \alpha_s^u \circ \beta_q^v$.

Now we define a homotopy $H:I_{m+u,n+v} \times I_k \to X$ to be a row- and column-doubled version of $f$ outside of $\bar R$, and equal to $H_{\bar R}$ inside of $R$:
\[ H(x,t) = \begin{cases} f\circ \alpha_s^u \circ \beta_q^v(x) &\text{ if } x \not \in \bar R, \\
H_{\bar R}(x,t) &\text{ if } x \in \bar R. \end{cases} \]

We will show that $H_R$ is a homotopy relative to the boundary from $f\circ \alpha_s^u \circ \beta_q^v$ to $A\circ \alpha_s^u \circ \beta_q^v$. Since $f \approx f\circ \alpha_s^u \circ \beta_q^v$ and $A \approx A\circ \alpha_s^u \circ \beta_q^v$ by Theorem \ref{columndoublehomotopy} and following, this will demonstrate that $f \approx A$ as desired.

We have: 
\[
H(x,0) = \begin{cases} f\circ \alpha_s^u \circ \beta_q^v(x) &\text{ if } x \not\in \bar R, \\
\bar f_R(x) &\text{ if } x\in \bar R \end{cases} = f\circ \alpha_s^u \circ \beta_q^v(x)
\]
and
\[
H(x,k) = \begin{cases} f\circ \alpha_s^u \circ \beta_q^v(x) &\text{ if } x \not\in \bar R, \\
\bar g(x) &\text{ if } x\in \bar R \end{cases} = A \circ \alpha_s^u \circ \beta_q^v(x)
\]
so $H$ begins at $f\circ \alpha_s^u \circ \beta_q^v$ and ends at $A \circ \alpha_s^u \circ \beta_q^v$.

Also it is easy to see that $H$ is a homotopy relative to the boundary: if $x\in \partial I_{m+u,n+v}$ then either $x\not\in \bar R$ or $x\in \partial \bar R$. In either case we have $H(x,t) = x_0$ for all $t$ because $f(\partial I_{m,n}) = x_0$ and $H_R$ is a homotopy relative to the boundary. Thus it remains only to show that $H$ is continuous.

Let $(x,t)\sim (x',t') \in I_{m+u,n+v} \times I_k$, and we must show that $H(x,t) \sim H(x',t')$. We prove this in simple cases according to whether or not the points $x,x'$ are in $\bar R$.

If $x\in \bar R$ and $x'\in \bar R$, then we have:
\[ H(x,t) = H_{\bar R}(x,t) \sim H_{\bar R}(x',t') = H(x',t') \]
where the middle step is because $H_{\bar R}$ is continuous. Thus $H(x,t) \sim H(x',t')$ as desired.

If $x\not \in \bar R$ and $x'\not\in \bar R$, then we have:
\[ H(x,t) = f\circ \alpha_s^u \circ \beta_q^v (x) \sim f\circ \alpha_s^u \circ \beta_q^v (x') = H_R(x',t') \]
where the middle step is because $f$, $\alpha_s$, and $\beta_q$ are continuous. Thus again $H(x,t) \sim H(x',t')$ as desired.

If $x\in \bar R$ and $x'\not \in \bar R$, this is only possible when $x \in \partial \bar R$. In that case we must have $f\circ \alpha_s^u \circ \beta_q^v(x) = x_0$ because $f$ maps $\partial R$ to $x_0$. Since $x \in \partial \bar R$ and $H_{\bar R}$ maps $\partial \bar R$ to $x_0$, we have:
\[ H(x,t) = H_{\bar R}(x,t) = x_0 = f\circ \alpha_s^u \circ \beta_q^v(x). \]
Since $x' \not \in \bar R$, we have $H(x',t') = f\circ \alpha_s^u \circ \beta_q^v(x')$. Since $f, \alpha_s,$ and $\beta_q$ are continuous and $x\sim x'$, this means
\[ H(x',t') = f\circ \alpha_s^u \circ \beta_q^v(x') \sim  f\circ \alpha_s^u \circ \beta_q^v(x) = H(x,t) \]
and so $H(x,t) \sim H(x',t')$,
which completes the proof that $H$ is continuous.
\end{proof}

%

The row- and column-doubling operations are closely related to \emph{subdivision} of a digital image, which was defined in \cite{EGS12} to define digitally continuous multivalued maps. The subdivision was used fundamentally in \cite{LOS19c} as the basis for the definition of a digital fundamental group.
In \cite{LOS19c} the authors use a general subdivision of a digital image which they denote $S(X,k)$ for some natural number $k$, which essentially replaces each point of $X$ by a $k\times k$ block of points. This subdivision comes with a natural projection map $\rho_k: S(X,k) \to X$ which collapses $k\times k$ blocks into single points. 

For our purposes we will only need to subdivide the rectangle $I_{m,n}$, in which case the $k$-fold subdivision is simply the rectangle $I_{km +k-1,kn+k-1}$. And then the projection map $\rho_k:I_{km+k-1,kn+k-1}\to I_{m,n}$ is obtained by iterated row and column omissions as follows:
\[ \rho_k = \alpha_0^{k-1} \circ \alpha_k^{k-1} \circ \dots \circ \alpha_{mk}^{k-1} \circ \beta_0^{k-1} \circ \dots \circ \beta_{kn}^{k-1}. \]


Applying Theorem \ref{columndoublehomotopy} repeatedly to the above gives:
\begin{thm}\label{subdivhomotopy}
Let $f:(I_{m,n}, \partial I_{m,n}) \to (X,x_0)$ be continuous and $k\ge 1$. Then $f \approx f \circ \rho_k$. \qed
\end{thm}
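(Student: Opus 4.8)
The plan is to read the statement as an immediate iteration of Theorem~\ref{columndoublehomotopy}. By definition, $\rho_k$ is a composition of finitely many of the column-omission maps $\alpha_i$ and row-omission maps $\beta_j$, and each precomposition with such a map is precisely a column- or row-doubling in the sense of Definition~\ref{def: alpha beta}. So the strategy is to walk through the factors of $\rho_k$ one at a time, invoking Theorem~\ref{columndoublehomotopy} at each stage, and then to glue the resulting extension-homotopies together using transitivity (Theorem~\ref{thm: extn-htpy equiv}).

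Concretely, write $\rho_k = d_1 \circ d_2 \circ \cdots \circ d_N$, where each $d_\ell$ is one of the maps $\alpha_{i}$ or $\beta_{j}$ occurring in the displayed formula for $\rho_k$, so that $d_\ell \colon I_{p_\ell,q_\ell} \to I_{p_{\ell-1},q_{\ell-1}}$ with $I_{p_0,q_0} = I_{m,n}$, with $I_{p_N,q_N} = I_{km+k-1,\,kn+k-1}$, and with each step increasing exactly one of the two dimensions by $1$. Set $g_0 = f$ and $g_\ell = f \circ d_1 \circ \cdots \circ d_\ell$ for $1 \le \ell \le N$, so that $g_N = f \circ \rho_k$. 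First I would observe, by an easy induction, that each $g_\ell$ is genuinely a map of pairs $(I_{p_\ell,q_\ell},\partial I_{p_\ell,q_\ell}) \to (X,x_0)$: each $\alpha_i$ (respectively $\beta_j$) carries the boundary of its domain rectangle into the boundary of its codomain rectangle, so if $g_\ell(\partial I_{p_\ell,q_\ell}) = \{x_0\}$ then also $g_{\ell+1}(\partial I_{p_{\ell+1},q_{\ell+1}}) = g_\ell\bigl(d_{\ell+1}(\partial I_{p_{\ell+1},q_{\ell+1}})\bigr) \subseteq g_\ell(\partial I_{p_\ell,q_\ell}) = \{x_0\}$. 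Consequently $g_{\ell+1} = g_\ell \circ d_{\ell+1}$ is exactly a column- or row-doubling of the map of pairs $g_\ell$, and Theorem~\ref{columndoublehomotopy} applied to $g_\ell$ gives $g_\ell \approx g_{\ell+1}$. Chaining these for $\ell = 0, 1, \ldots, N-1$ and using transitivity of $\approx$ yields
\[ f = g_0 \approx g_1 \approx \cdots \approx g_N = f \circ \rho_k, \]
as required.

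I do not expect any real obstacle here: the result is a bookkeeping consequence of Theorem~\ref{columndoublehomotopy}, which is why it is reasonable to mark it \qed. The only point deserving a moment of care is the verification that the intermediate maps $g_\ell$ remain maps of pairs, so that Theorem~\ref{columndoublehomotopy} is legitimately applicable at every stage; this reduces to the elementary fact that the maps $\alpha_i$ and $\beta_j$ send boundary to boundary, which one checks directly from their defining formulas.
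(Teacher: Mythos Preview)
Your proposal is correct and matches the paper's own argument exactly: the paper simply remarks that the result follows by applying Theorem~\ref{columndoublehomotopy} repeatedly to the factorization of $\rho_k$ as a composite of $\alpha_i$'s and $\beta_j$'s, which is precisely the iteration you spell out. Your extra check that the intermediate maps $g_\ell$ remain maps of pairs is a welcome detail the paper leaves implicit.
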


\section{Definition of $\pi_2(X,x_0)$}\label{sec: defn of pi_2}

We now give the definition of our second homotopy group and establish its basic general properties.

\begin{defn}
Given a based digital image $(X,x_0)$, the \emph{second homotopy group} of $(X,x_0)$, written $\pi_2(X,x_0)$, is the set of equivalence classes of maps $f:(I_{m,n},\partial I_{m,n}) \to (X,x_0)$, for all rectangles $I_{m,n}$, modulo the equivalence relation of extension homotopy.
\end{defn}

\begin{figure}
\[
\begin{tikzpicture}[scale=.4]
\foreach \x in {0,...,12} {
 \foreach \y in {0,...,10} {
   \node at (\x+.5,\y+.5) {.};
  }
 }
\draw[densely dotted] (0,0) grid (13,11);
\draw[thick] (0,0) rectangle (13,11);
\draw[thick] (0,6) -- (13,6);
\draw[thick] (6,0) -- (6,11);

\fill[white] (1,1) rectangle (5,5);
\node at (3,3) {$f$};

\fill[white] (7,7) rectangle (12,10);
\node at (9.5,8.5) {$g$};
\end{tikzpicture}
\]
\caption{Schematic of the product $f\cdot g$ of two maps $f$ and $g$. \label{cdotfig}}
\end{figure}

The group operation in $\pi_2(X,x_0)$ is induced by the following operation on maps. Let $f\colon (I_{m,n}, \partial I_{m,n})\to (X,x_0)$ and $g\colon (I_{r,s}, \partial I_{r,s})\to (X,x_0)$ be maps.  Define $f\cdot g\colon (I_{m+r+1, n+s+1}, \partial I_{m+r+1, n+s+1}) \to (X, x_0)$ by

$$
(f\cdot g)(a,b)=
    \begin{cases}
        f(a,b) & \text{if } (a,b)\in [0,m]_{\Z}\times [0,n]_{\Z}\\
        g(a-(m+1), b-(n+1)) & \text{if } (a,b)\in [m+1,m+r+1]_{\Z}\times [n+1,n+s+1]_{\Z}\\
        x_0 & \text{otherwise}.
    \end{cases}
$$
See Figure~\ref{cdotfig} for an illustration with $I_{m,n} = I_{5, 5}$ and  $I_{r,s} = I_{6, 4}$

\begin{prop}\label{prop: operation well-defined} Suppose we have maps 
$$f_1\colon (I_{m_1,n_1}, \partial I_{m_1,n_1})\to (X, x_0), \quad   f_2\colon (I_{m_2,n_2}, \partial I_{m_2,n_2})\to (X, x_0),$$
and
$$g_1\colon (I_{r_1,s_1}, \partial I_{r_1,s_1})\to (X, x_0), \quad g_2\colon (I_{r_2,s_2}, \partial I_{r_2,s_2})\to (X, x_0)$$
with $f_1\approx f_2$ and $g_1\approx g_2$. Then $f_1\cdot g_1\approx f_2 \cdot g_2$.
\end{prop}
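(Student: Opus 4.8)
The plan is to reduce the claim to a single application of Lemma~\ref{relativehomotopy} (local extension homotopy on a subrectangle) together with the translation principle of Remark~\ref{shiftrem}. First I would observe that the product map $f \cdot g$ has a simple structure: the rectangle $I_{m+r+1,n+s+1}$ contains two disjoint ``blocks'' carrying the values of $f$ and $g$ respectively, each block surrounded entirely by basepoint values, with the two blocks occupying opposite corners (lower-left for $f$, upper-right for $g$). In particular, the subrectangle $R_f = [0,m]_\Z \times [0,n]_\Z$ satisfies the hypothesis of Lemma~\ref{relativehomotopy}: the restriction $(f\cdot g)_{R_f}$ is exactly $f$, and $f$ maps $\partial R_f$ to $x_0$ because the row and column adjacent to this block in $f\cdot g$ are constant at $x_0$. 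The analogous statement holds for the block $R_g$ carrying $g$.

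Next I would run the argument in two stages. Since $f_1 \approx f_2$, Lemma~\ref{relativehomotopy} applied with $R = R_f$ inside $I_{m_1+r_1+1,n_1+s_1+1}$ shows that $f_1 \cdot g_1$ is extension-homotopic to the map obtained by replacing the $f_1$-block with $f_2$; that is, $f_1 \cdot g_1 \approx f_2 \cdot g_1$, where a priori the replacement happens ``in place'' (the $g_1$ block stays put while the $f$-block is enlarged and deformed), so after this step we have a map that agrees with $f_2 \cdot g_1$ up to a translation of the $g_1$-block through a region of basepoint values — which by Remark~\ref{shiftrem} does not change the extension-homotopy class. Then I would repeat with $R = R_g$ and $g_1 \approx g_2$ to obtain $f_2 \cdot g_1 \approx f_2 \cdot g_2$. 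Chaining these via transitivity of $\approx$ (Theorem~\ref{thm: extn-htpy equiv}) gives $f_1 \cdot g_1 \approx f_2 \cdot g_2$, as required.

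The main obstacle, and the step that needs the most care, is bookkeeping the sizes and positions: Lemma~\ref{relativehomotopy} enlarges the ambient rectangle by doubling rows/columns at the right/top edge of the subrectangle $R$, so when we replace the $f$-block the result is not literally $f_2 \cdot g_1$ on the nose — the $g_1$-block may end up shifted, or the two blocks may no longer sit in the ``corner'' configuration of the definition of $\cdot$. This is exactly what Remark~\ref{shiftrem} is for: any translation of a subrectangle through basepoint values preserves the class, and one can always trivially extend further to recover enough room to put both blocks back into the standard corner positions. So the real content is a careful but routine verification that, after the Lemma~\ref{relativehomotopy} deformation and some trivial extensions, the resulting map is related by block-translations (Remark~\ref{shiftrem}) to $f_2 \cdot g_1$ in its standard form. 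Once that is granted for each of the two stages, the conclusion is immediate. I would also note the symmetry: one could equally do the $g$-replacement first, which is occasionally convenient but not needed here.
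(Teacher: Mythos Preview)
Your overall strategy---replace one block at a time and chain by transitivity---is exactly the paper's. The paper, however, packages the block replacement differently: it first uses Theorem~\ref{columndoublehomotopy} (column/row doubling) to enlarge the $f$-block and the $g$-block inside the product to common sizes $I_{\bar m,\bar n}$ and $I_{\bar r,\bar s}$, obtaining $f_1\cdot g_1 \approx \bar f_1 \cdot \bar g_1$ and $f_2\cdot g_2 \approx \bar f_2 \cdot \bar g_2$, and only then invokes the simpler Lemma~\ref{localized homotopy} (genuine homotopy on a fixed ambient rectangle) to pass $\bar f_1 \cdot \bar g_1 \simeq \bar f_2 \cdot \bar g_1 \simeq \bar f_2 \cdot \bar g_2$. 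No translations are needed.

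Your route via Lemma~\ref{relativehomotopy} is morally the same, but as written it has a small imprecision: Lemma~\ref{relativehomotopy} takes a replacement map $g$ defined on the \emph{same} subrectangle $R$ and returns a map $A$ on the \emph{same} ambient $I_{m,n}$; the enlargement you describe happens only inside its proof and is then undone. Since $f_2$ lives on $I_{m_2,n_2}$, not on $R_f = I_{m_1,n_1}$, you cannot literally feed $f_2$ into the lemma. The fix is immediate---first trivially extend $f_1\cdot g_1$ so that $R_f$ is large enough to hold a trivial extension of $f_2$, then apply Lemma~\ref{relativehomotopy}---but that preliminary enlargement is precisely the column/row doubling step the paper makes explicit, and once you do it, the translations of Remark~\ref{shiftrem} become unnecessary.
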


\begin{proof} Let $f_1,f_2,g_1$, and $g_2$ satisfy the above. Then there are trivial extensions $\bar{f_1}, \bar{f_2}\colon I_{\bar{m},\bar{n}}\to X$ of $f_1$ and $f_2$, respectively, that are homotopic relative the boundary.  Similarly, there are trivial extensions $\bar{g_1}, \bar{g_2}\colon I_{\bar{r},\bar{s}}\to X$ of $g_1$ and $g_2$, respectively, that are homotopic relative the boundary. 

In the notation of Definition~\ref{def: alpha beta}, we may write $\bar{f_1}$ as $f_1 \circ \alpha_{m_1}^{\bar{m}-m_1} \circ \beta_{n_1}^{\bar{n}-n_1}$.  Since the first $m_1 +1$ columns and first $n_1+1$ rows (aside from extra $x_0$s to the right and above) of $f_1\cdot g_1$ are those of $f_1$, we may further write $\bar{f_1}\cdot g_1$ as $(f_1\cdot g_1) \circ \alpha_{m_1}^{\bar{m}-m_1} \circ \beta_{n_1}^{\bar{n}-n_1}$. Continuing to break down the trivial extensions into successive column- and row-doubling,  being careful with the indexing of rows and columns in the products, and using Theorem~\ref{columndoublehomotopy}, we have a sequence of extension homotopies as follows:
$$
\begin{aligned}f_1 \cdot g_1 &\approx (f_1\cdot g_1) \circ \alpha_{m_1}^{\bar{m}-m_1} \circ \beta_{n_1}^{\bar{n}-n_1} = \bar{f_1}\cdot g_1 \\
&\approx (\bar{f_1}\cdot g_1) \circ \alpha_{\bar{m}+r_1 +1}^{\bar{r}-r_1} \circ \beta_{\bar{n}+s_1+1}^{\bar{s}-s_1} = \bar{f_1}\cdot \bar{g_1}.
\end{aligned}
$$
Likewise, we may write $f_2 \cdot g_2 \approx \bar{f_2}\cdot \bar{g_2}$ by changing all subscripts from $1$ to $2$ in the above steps. 
Now the homotopies $\bar{f_1} \simeq \bar{f_2}$ and $\bar{g_1} \simeq \bar{g_2}$ extend to homotopies $\bar{f_1}\cdot \bar{g_1} \simeq \bar{f_2}\cdot \bar{g_1} \simeq \bar{f_2}\cdot \bar{g_2}$ 
by Lemma~\ref{localized homotopy}.  It follows that we have $f_1\cdot g_1 \approx f_2\cdot g_2$.
\end{proof}

Now let $[f],[g]\in \pi_2(X,x_0)$ and define $[f]\cdot[g]=[f\cdot g]$.  This operation is well-defined by Proposition \ref{prop: operation well-defined}.

\begin{thm}\label{thm: group} With the operation given above, the set of equivalence classes $\pi_2(X,x_0)$ is a group.
\end{thm}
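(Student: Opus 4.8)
The plan is to verify the monoid axioms on representatives and then to construct inverses with an explicit ``fold'' homotopy; well-definedness of $[f]\cdot[g]:=[f\cdot g]$ is already Proposition~\ref{prop: operation well-defined}. \emph{Associativity} in fact holds on the nose: given $f\colon(I_{m,n},\partial I_{m,n})\to(X,x_0)$, $g\colon(I_{r,s},\partial I_{r,s})\to(X,x_0)$ and $h\colon(I_{p,q},\partial I_{p,q})\to(X,x_0)$, unwinding the definition of $\cdot$ twice shows that $(f\cdot g)\cdot h$ and $f\cdot(g\cdot h)$ are the \emph{same} map on $I_{m+r+p+2,\,n+s+q+2}$ --- in each, $f$ occupies $[0,m]_\Z\times[0,n]_\Z$, $g$ occupies $[m+1,m+r+1]_\Z\times[n+1,n+s+1]_\Z$, $h$ occupies $[m+r+2,m+r+p+2]_\Z\times[n+s+2,n+s+q+2]_\Z$, and every other value is $x_0$. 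So associativity descends to classes at once. For the \emph{identity} I would take $e\colon I_{0,0}\to\{x_0\}$ (any constant map represents the same class, since any two share a common trivial extension): $f\cdot e$ is precisely the trivial extension of $f$ to $I_{m+1,n+1}$, so $f\cdot e\approx f$ by Theorem~\ref{columndoublehomotopy}, while $e\cdot f$ is a copy of $f$ shifted by $(1,1)$ inside $I_{m+1,n+1}$ and constant at $x_0$ elsewhere, so $e\cdot f\approx f$ by Remark~\ref{shiftrem} and Theorem~\ref{columndoublehomotopy}. Hence $[e]$ is a two-sided identity.

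The real content is the construction of \emph{inverses}. Given $f\colon(I_{m,n},\partial I_{m,n})\to(X,x_0)$, let $f'(a,b)=f(m-a,b)$ be its reflection in the first coordinate; $f'$ is again a map of pairs on $I_{m,n}$, and $(f')'=f$. I claim $[f]\cdot[f']=[e]$. First, using the translation principle of Remark~\ref{shiftrem} (and Lemma~\ref{relativehomotopy}) to slide the $f'$-block of $f\cdot f'$ down so that it sits horizontally beside the $f$-block, one obtains $f\cdot f'\approx G$, where $G\colon I_{2m,n}\to X$ is the horizontal concatenation $G(a,b)=f(a,b)$ for $a\le m$ and $G(a,b)=f(2m-a,b)$ for $a\ge m$ (the two formulas agree on column $m$, on which $f$ is constant at $x_0$). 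It then suffices to show $G$ is homotopic rel boundary to the constant map $I_{2m,n}\to\{x_0\}$.

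For this I would use the fold homotopy $H\colon I_{2m,n}\times I_m\to X$,
\[ H\big((a,b),u\big)=f\big(\min(a,\,2m-a,\,m-u),\,b\big), \]
which is digitally continuous because it is $f$ composed with the map $I_{2m,n}\times I_m\to I_{m,n}$, $\big((a,b),u\big)\mapsto\big(\min(a,2m-a,m-u),b\big)$, and a minimum of $1$-Lipschitz functions is $1$-Lipschitz. One checks directly that $H(-,0)=G$, that $H(-,m)\equiv x_0$, and that $H$ carries $\partial I_{2m,n}$ to $x_0$ at every time $u$ --- using that $\min(a,2m-a,m-u)\in\{0,\dots,m\}$ always, equals $0$ when $a\in\{0,2m\}$, and that $f(\partial I_{m,n})=\{x_0\}$. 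Hence $G\approx e$, so $[f]\cdot[f']=[e]$; applying the same claim to $f'$ (using $(f')'=f$) gives $[f']\cdot[f]=[e]$ as well, so $[f']$ is a two-sided inverse of $[f]$, which finishes the proof that $\pi_2(X,x_0)$ is a group.

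I expect the main obstacle to be neither associativity nor the identity --- both amount to bookkeeping --- but rather the reduction $f\cdot f'\approx G$ in the inverse argument: reorganizing the two diagonally-offset blocks of $f\cdot f'$ into a side-by-side configuration requires carefully tracking indices through trivial extensions, row- and column-doubling (Theorem~\ref{columndoublehomotopy}) and the localized and relative homotopy lemmas (Lemmas~\ref{localized homotopy} and~\ref{relativehomotopy}). If that step is awkward to carry out directly, the fallback is to first pass to a subdivision, $f\approx f\circ\rho_k$ (Theorem~\ref{subdivhomotopy}), so that every required translation can be performed one column or row at a time.
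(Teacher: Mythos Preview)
Your proof is correct and follows the paper's line for associativity, identity, and the reduction $f\cdot f'\approx G$ (what the paper calls $g_m$, obtained from $(f\mid f^{-1})$ by collapsing the repeated middle column). The genuine difference is how you pass from $G$ to the constant map. The paper argues iteratively: it defines maps $g_k$ on $I_{2k,n}$ with columns $\mathbf v_0,\dots,\mathbf v_k,\dots,\mathbf v_0$, shows $g_k\simeq g_{k-1}\circ\alpha_{k-1}^2\approx g_{k-1}$ one step at a time, and chains these to reach $g_0$, the constant. Your fold homotopy $H\big((a,b),u\big)=f\big(\min(a,2m-a,m-u),b\big)$ accomplishes the same reduction in a single stroke, as a genuine homotopy rel boundary on $I_{2m,n}$ rather than a sequence of extension-homotopies through smaller rectangles. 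The paper's approach has the virtue of staying entirely within the column-doubling/collapsing machinery already developed in Section~\ref{sec: subd and doubling}, so no new idea is needed; your approach is shorter and more geometric, at the cost of introducing one explicit formula whose continuity must be checked (the $1$-Lipschitz argument for the minimum is the right way to do this). Both establish only that $[f']$ is a right inverse; you close the argument cleanly by applying the same result to $f'$ and using $(f')'=f$, whereas the paper simply appeals to the general fact that right inverses in a monoid are two-sided.
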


\begin{proof}
Associativity follows immediately since  $(f\cdot g)\cdot h=f\cdot (g\cdot h)$ at the level of maps.

Next, let $c_{x_0}\colon I_{m,n}\to X$ be the constant map at $x_0 \in X$ from any rectangle.  Any such map may be viewed as a trivial extension of the constant map $c_{x_0}\colon I_{0,0}\to X$, where $I_{0,0} = \{ (0, 0) \}$.  We show that  $[c_{x_0}]$ acts as a two-sided identity, where---by the preceding remark---we may as well assume the representative $c_{x_0}$ has domain the single point $I_{(0, 0)}$.   Let $f\colon (I_{m,n}, \partial I_{m,n}) \to (X, x_0)$ be any map. On the right, we see that  
 $f\cdot c_{x_0}\colon I_{m+1, n+1} \to X$ is simply equal to the trivial extension $\bar f\colon I_{m+1, n+1} \to X$. Thus $[f]\cdot[c_{x_0}] = [f\cdot c_{x_0}] = [\bar f] = [f]$ and so $[c_{x_0}]$ acts on the right as an identity element.  On the left, we see that $c_{x_0}\cdot f\colon I_{m+1, n+1} \to X$ may be written as the result of doubling the first row and column of $f$, in the sense of Section~\ref{sec: subd and doubling}.  From 
 Theorem~\ref{columndoublehomotopy}, we may write
 $$c_{x_0}\cdot f = f\circ \alpha_0\circ\beta_0 \approx f\circ \alpha_0 \approx f.$$
 That is, we have $[c_{x_0}]\cdot [f] = [f]$ and $[c_{x_0}]$ acts as a left identity too.

Finally, we consider inverses.  For a map $f\colon I_{m,n}\to X$, define $f^{-1}\colon I_{m,n}\to X$ by $f^{-1}(a,b)=f(m-a,b)$.  As a pre-processing step,  we show that $f\cdot f^{-1}$ has the same extension homotopy type of the map that we denote by 
$(f\mid f^{-1})\colon (I_{2m+1,n}, \partial I_{2m+1,n}) \to (X, x_0)$ and define as 
$$
(f\mid f^{-1})(a, b)=
    \begin{cases}
        f(a,b) & \text{if } 0\leq a \leq m\\
        f^{-1}(a-(m+1),b) & \text{if } m+1 \leq a \leq 2m+1.\\
    \end{cases}
$$
To see this, note that on the sub-rectangle $R = [m+1,2m+1]_{\Z}\times [n+1,2n+1]_{\Z} \subseteq I_{2m+1, 2n+1}$, the map $f\cdot f^{-1}$ restricts to the map $f^{-1}\circ \beta_{0}^{n+1}$ in the notation of 
Theorem~\ref{columndoublehomotopy}.  By using translations of the type described in Remark \ref{shiftrem} and which flow from Corollary~\ref{shiftcor} and Lemma~\ref{localized homotopy}, $f^{-1}\circ \beta_{0}^{n+1}$ and $f^{-1}\circ \beta_{n+1}^{n+1}$ on the right-hand half of the rectangle are homotopic via a homotopy that extends to one of $f\cdot f^{-1}$ on the whole rectangle $I_{2m+1, 2n+1}$ and leaves the left-hand half fixed. This map may now be written as $(f\mid f^{-1})\circ \beta_{n+1}^{n+1}$ and repeated application of Theorem~\ref{columndoublehomotopy} yields an extension homotopy to $(f\mid f^{-1})$.  This pre-processing step may be summarized pictorially as a combination of translation and collapsing of repeated rows as follows:
\[
f\cdot g =
\vcenter{\hbox{\begin{tikzpicture}[scale=.4]
\draw[thick] (0,0) rectangle (2,3) node[pos=.5] {$f$};
\draw[thick] (2,3) rectangle (4,6) node[pos=.5] {$f^{-1}$};
\draw[thick] (0,3) rectangle (2,6) node[pos=.5] {$x_0$};
\draw[thick] (2,0) rectangle (4,3) node[pos=.5] {$x_0$};
\end{tikzpicture}
}}
\simeq
\vcenter{\hbox{\begin{tikzpicture}[scale=.4]
\draw[thick] (0,0) rectangle (2,3) node[pos=.5] {$f$};
\draw[thick] (2,3) rectangle (4,6) node[pos=.5] {$x_0$};
\draw[thick] (0,3) rectangle (2,6) node[pos=.5] {$x_0$};
\draw[thick] (2,0) rectangle (4,3) node[pos=.5] {$f^{-1}$};
\end{tikzpicture}
}}
\approx
\vcenter{\hbox{\begin{tikzpicture}[scale=.4]
\draw[thick] (0,0) rectangle (2,3) node[pos=.5] {$f$};
\draw[thick] (2,0) rectangle (4,3) node[pos=.5] {$f^{-1}$};
\end{tikzpicture}
}}
= (f\mid f^{-1})
\]

Now display the values of $(f\mid f^{-1})$ on the rectangle  $I_{2m+1, n}$ in column-wise form as $(f\mid f^{-1}) = [\mathbf{v}_0\mid \mathbf{v}_1\mid \cdots\mid \mathbf{v}_{m-1}\mid \mathbf{v}_m\mid \mathbf{v}_m\mid \mathbf{v}_{m-1}\mid \cdots\mid \mathbf{v}_0]$,
where each $\mathbf{v}_i$ is a column vector of entries from $X$ given by
$$\mathbf{v}_i = \left[ \begin{array}{c} f(a, n)\\ f(a, n-1)\\ \vdots \\ f(a, 1)\\f(a, 0) \end{array}\right].$$
Since the middle pair of columns repeat, we may write $(f\mid f^{-1}) = g_m\circ \alpha_m$, in the notation of Theorem~\ref{columndoublehomotopy}, where the values of $g_m$ on the rectangle  $I_{2m, n}$ in column-wise form are 
$$g_m = [\mathbf{v}_0\mid \mathbf{v}_1\mid \cdots\mid \mathbf{v}_{m-1}\mid \mathbf{v}_m\mid \mathbf{v}_{m-1}\mid \cdots\mid \mathbf{v}_0].$$
Namely, we have ``collapsed" the repeated $\mathbf{v}_m$ column into a single column and we have $(f\mid f^{-1}) \approx g_m$ by Theorem~\ref{columndoublehomotopy}.  Now define, for each $k = 0, \ldots, m$, a map
$g_k \colon (I_{2k, n}, \partial I_{2k, n}) \to (X, x_0)$ by
$$g_k(a, b) = \begin{cases} f(a, b) & a =0, \ldots, k\\ f(2k-a, b) & a= k+1, \ldots, 2k.\end{cases}$$
The map $g_m$ we arrived at above is the case in which $k = m$, and the general $g_k$ may be pictured column-wise as a reduced form of $g_m$, with 
$$g_k = [\mathbf{v}_0\mid \cdots\mid \mathbf{v}_{k-1}\mid \mathbf{v}_k\mid \mathbf{v}_{k-1}\mid \cdots\mid \mathbf{v}_0].$$
We note in this case that:
\[ g_{k-1}\circ \alpha_{k-1}^2 =  [\mathbf{v}_0\mid \cdots\mid \mathbf{v}_{k-1}\mid \mathbf{v}_{k-1}\mid \mathbf{v}_{k-1}\mid \cdots\mid \mathbf{v}_0]. \]

\emph{Claim}. For each $k \in \{m, m-1, \ldots, 1 \}$, we have $g_k \simeq g_{k-1}\circ \alpha_{k-1}^2 \approx g_{k-1}$, where the first homotopy is relative to the boundary.

\emph{Proof of Claim}. Repeated application of Theorem~\ref{columndoublehomotopy} gives an extension homotopy $g_{k-1}\circ \alpha_{k-1}^2 \approx g_{k-1}$, so we need only show that  $g_k \simeq g_{k-1}\circ \alpha_{k-1}^2$. In fact we will prove that $g_k \simeq g_{k-1}\circ \alpha_{k-1}^2$ by a one-step homotopy. By Lemma \ref{onesteplem}, take $(a,b) \sim (a',b')$, and we must show that $g_k(a,b) \sim g_{k-1}\circ \alpha_{k-1}^2(a',b')$.

Since $g_k$ and $g_{k-1}\circ \alpha_{k-1}^2$ differ only in column $k$, we need only consider the cases where $\{a,a'\} = \{k-1,k\}$. (The cases where $\{a,a'\} = \{k,k+1\}$ are similar.)

In the case where $a=k-1$ and $a'=k$, we have:
\[ g_{k-1}\circ \alpha^2_{k-1}(a',b') = g_{k-1}\circ \alpha^2_{k-1}(k,b') = g_k(k-1,b') \sim g_k(k-1,b) = g_k(a,b) \]
so $g_{k-1}\circ \alpha^2_{k-1}(a',b') \sim g_k(a,b)$ as desired.

In the case where $a=k$ and $a'=k-1$, we have:
\[ g_{k-1}\circ \alpha^2_{k-1}(a',b') = g_{k-1}\circ \alpha^2_{k-1}(k-1,b') = g_k(k,b') \sim g_k(k-1,b') = g_k(a',b')\]
so again $g_{k-1}\circ \alpha^2_{k-1}(a',b') \sim g_k(a,b)$ as desired. \emph{End of Proof of Claim}.  

The preceding arguments give a chain of extension homotopies as follows:
$$f\cdot f^{-1} \approx (f \mid f^{-1}) \approx g_m \approx g_{m-1} \approx \cdots \approx g_1 \approx g_0,$$
where the final map is  a constant map at $x_0$.  This shows that, for any map $f$, we have a right inverse $[f^{-1}]$ for $[f]$, which is sufficient for $[f^{-1}]$ to be a two-sided inverse for $[f]$. 
\end{proof}

\begin{rem}\label{turkishremark}
The earlier papers \cite{MVK14, VeKa15} define a higher digital homotopy group in a way that superficially uses the same ingredients that we do here. Some of the deductions in that work appear to be logically flawed, and a number of proofs are omitted. But the main difference between that work and ours stems from subtle but vital differences in the basic approach. In \cite{MVK14, VeKa15} a rectangle $I_{m,m}$ is assumed to have only $4$-adjacencies, which means that many more maps from a rectangle are admitted as continuous than are in our work.  Furthermore, the ``box" homotopy is used, which means that maps are more easily homotopic there than in our work here.  These differences result in totally different invariants.  For instance, the $2$-sphere that we use in this paper is easily shown to be contractible if the box homotopy is used in place of ours. All the higher homotopy groups of \cite{MVK14, VeKa15} would thus be trivial for our $2$-sphere and in fact no non-trivial example of a higher homotopy group is given in that work.
\end{rem}

The next result shows that the second homotopy group is independent of choice of basepoint.

\begin{prop}\label{prop: basepoint} If $x$ and $x'$ are in the same (path-connected) component of $X$, then $\pi_2(X,x) \cong \pi_2(X,x')$.
\end{prop}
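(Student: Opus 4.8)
The plan is to imitate the classical change-of-basepoint isomorphism $\beta_\gamma\colon \pi_2(X,x_0)\to\pi_2(X,x_1)$, building it out of a ``framing'' construction. First I would reduce to the case of \emph{adjacent} basepoints: since $x$ and $x'$ lie in the same component, there is an edge-path $x = x_0 \sim x_1 \sim \cdots \sim x_\ell = x'$, and it is enough to produce an isomorphism $\pi_2(X,x_{i-1}) \cong \pi_2(X,x_i)$ for each edge and then compose (a composite of group isomorphisms is a group isomorphism). So I assume $x \sim x'$.

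For $f\colon (I_{m,n},\partial I_{m,n}) \to (X,x)$, let $\hat f\colon (I_{m+2,n+2}, \partial I_{m+2,n+2}) \to (X,x')$ be the map that places $f$ on the central sub-rectangle $[1,m+1]_\Z \times [1,n+1]_\Z$ and sends all of $\partial I_{m+2,n+2}$ to $x'$. This is continuous: two adjacent boundary points both carry $x'$, and a boundary point adjacent to a point of the embedded copy of $\partial I_{m,n}$ sees the values $x'$ and $x$, which are adjacent by hypothesis. Define $\Phi\colon \pi_2(X,x) \to \pi_2(X,x')$ by $\Phi[f] = [\hat f]$. To see $\Phi$ is well defined: a homotopy of $f$ relative to the boundary extends to a homotopy of $\hat f$ relative to the boundary that is stationary on the frame (the frame value $x'$ stays adjacent to the constant boundary value $x$ of the homotopy), and if $\bar f$ is a trivial extension of $f$ then flooding the added $x$-valued points of $\hat{\bar f}$ out to $x'$ by spider moves (Lemma~\ref{lem: spider move}) shows $\hat{\bar f}$ is homotopic relative to the boundary to a trivial extension of $\hat f$; combined with Theorem~\ref{thm: extn-htpy equiv} this gives well-definedness.

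Next I would show $\Phi$ is a homomorphism, i.e.\ $\widehat{f\cdot g} \approx \hat f \cdot \hat g$. Starting from $\widehat{f\cdot g}$, which is the map $f \cdot g$ surrounded by a single ring of $x'$, I would flood \emph{every} point that carries the value $x$ and lies outside the $f$-block and the $g$-block out to the value $x'$ by spider moves; each such move is legitimate because such a point is never adjacent to an interior point of the $f$- or $g$-block, so its neighbours carry only $x$ or $x'$, and $x'$ is adjacent to $x$, to $x'$, and to itself. The resulting map is a copy of $\hat f$ and a copy of $\hat g$ sitting in a sea of basepoint values $x'$, and Remark~\ref{shiftrem} (translation of subrectangles through a region of constant basepoint values, after trivial extension to make room) slides these two blocks into the configuration defining $\hat f \cdot \hat g$. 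For bijectivity, the symmetric map $\Psi\colon \pi_2(X,x') \to \pi_2(X,x)$ (framing by a ring of $x$) is a two-sided inverse: $\Psi\Phi[f]$ is represented by $f$ surrounded by a ring of $x'$ and then a ring of $x$, and flooding the $x'$-ring out to $x$ by spider moves leaves $f$ surrounded by two rings of $x$, which is extension-homotopic to $f$ by repeated application of Theorem~\ref{columndoublehomotopy}. Thus $\Phi$ is a group isomorphism, and composing along the edge-path gives $\pi_2(X,x) \cong \pi_2(X,x')$.

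I expect the homomorphism step to be the main obstacle. The flood-fill-by-spider-moves arguments and the translations are all in the spirit of Sections~\ref{sec: extension}--\ref{sec: subd and doubling}, but making them rigorous requires careful bookkeeping of which points carry which value at each stage, and of how large a rectangle is needed for the translations of Remark~\ref{shiftrem} to be carried out; the remaining verifications are routine uses of the extension-homotopy machinery already in place.
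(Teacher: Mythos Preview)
Your proposal is correct and follows essentially the same approach as the paper: reduce to adjacent basepoints, define the change-of-basepoint map by surrounding $f$ with a one-point border of the new basepoint, verify well-definedness via extending homotopies and handling trivial extensions, and prove the homomorphism property by using spider moves to convert the extra $x$-valued points in $\widehat{f\cdot g}$ to $x'$. Your explicit construction of the inverse $\Psi$ (framing by $x$) and the argument that $\Psi\Phi[f]=[f]$ via flooding the $x'$-ring back to $x$ is a welcome addition, since the paper's written proof establishes that $\Phi_{x_1}$ is a homomorphism but does not spell out bijectivity.
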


\begin{proof}
Since $x$ and $x'$ are in the same component of $X$,  there is a sequence of adjacencies $x=x_0\sim x_1\sim \cdots \sim x_n=x'$.  By induction, it suffices to show that $\pi_2(X,x_0)$ is isomorphic to $\pi_2(X,x_1)$. For a map  $f\colon (I_{m,n}, \partial I_{m,n})\to (X,x_0)$, define a map $f_{x_1}\colon (I_{m+2,n+2}, \partial I_{m+2,n+2})\to (X,x_1)$ by
$$
f_{x_1}(a,b)=    \begin{cases}
        f(a-1,b-1) & \text{if } (a,b)\in [1,m+1]_{\Z}\times [1,n+1]_{\Z}\\
        x_1 & \text{otherwise }. \\
    \end{cases}
$$
If we picture maps from a rectangle as a labeling of points by their values in $X$, as we have before, this simply takes $f$ and surrounds it by a border of $x_1$. We claim that this induces a well-defined map $\Phi_{x_1}\colon \pi_2(X,x_0)\to \pi_2(X,x_1)$.    For suppose we have $[f] = [g] \in \pi_2(X,x_0)$, with $f\colon (I_{m,n}, \partial I_{m,n})\to (X,x_0)$ and $g\colon (I_{r,s}, \partial I_{r,s})\to (X,x_0)$.  Then there are trivial extensions $\bar{f}, \bar{g}\colon (I_{\bar{m},\bar{n}}, \partial I_{\bar{m},\bar{n}})\to (X,x_0)$ of each and a homotopy relative the boundary $H \colon I_{\bar{m},\bar{n}} \to X$ from $\bar{f}$ to $\bar{g}$.  By Lemma~\ref{localized homotopy}, this homotopy extends to a homotopy relative the boundary $H \colon I_{\bar{m}+2,\bar{n}+2} \to X$ from $(\bar{f})_{x_1}$ to $(\bar{g})_{x_1}$.  In the following sequence of extension homotopies, identifications and homotopies 
$$f_{x_1} \approx f_{x_1} \circ \alpha_{m+2}^{\bar{m}-m}\circ \beta_{n+2}^{\bar{n}-n} = (\bar{f})_{x_1} \simeq (\bar{g})_{x_1} = g_{x_1} \circ \alpha_{r+2}^{\bar{m}-r}\circ \beta_{s+2}^{\bar{n}-s} \approx g_{x_1},$$
the first and last extension homotopies follow from repeated application of Theorem~\ref{columndoublehomotopy}, the middle homotopy is the one we just observed, and the identifications follow from the definitions of the maps involved. Hence, we may define a map $\Phi_{x_1}\colon \pi_2(X,x_0)\to \pi_2(X,x_1)$ by setting $\Phi_{x_1}([f]) = [f_{x_1}]$. 
We show that $\Phi_{x_1}$ is an isomorphism.

To show that $\Phi_{x_1}$ is a homomorphism, we must show that  $[(f\cdot g)_{x_1}] = [f_{x_1}]\cdot [g_{x_1}]$ in $\pi_2(X,x_1)$, for $[f], [g]\in \pi_2(X,x_0)$.  Firstly, if $f\colon (I_{m,n}, \partial I_{m,n})\to (X,x_0)$ and $g\colon (I_{r,s}, \partial I_{r,s})\to (X,x_0)$, then we have an extension homotopy
$$(f\cdot g)_{x_1} \approx (f\cdot g)_{x_1}\circ \alpha_{m+1}^2 \circ \beta_{n+1}^2$$
from repeated application of Theorem~\ref{columndoublehomotopy}. For brevity, let $h = (f\cdot g)_{x_1}\circ \alpha_{m+1}^2 \circ \beta_{n+1}^2$. 

\begin{figure}
\newcommand{\nodescale}{.6}
\[
\vcenter{\hbox{\begin{tikzpicture}[scale=.4]
\foreach \x in {-1,...,10} {
 \foreach \y in {-1,10} {
  \node[scale=\nodescale] at (\x+.5,\y+.5) {$x_1$};
  }
  }
 \foreach \x in {-1,10} {
 \foreach \y in {-1,...,10} {
  \node[scale=\nodescale] at (\x+.5,\y+.5) {$x_1$};
  }}
\foreach \x in {0,...,9} {
 \foreach \y in {0,...,9} {
   \node[scale=\nodescale] at (\x+.5,\y+.5) {.};
  }
 }
\draw[densely dotted] (-1,-1) grid (11,11);
\draw[thick] (0,0) rectangle (10,10);
\draw[thick] (-1,-1) rectangle (11,11);
\draw[thick] (0,5) -- (10,5);
\draw[thick] (5,0) -- (5,10);
\fill[white] (1,1) rectangle node[black] {$f$} (4,4);
\fill[white] (6,6) rectangle node[black] {$g$} (9,9);
\node at (5.5,-2) {$(f\cdot g)_{x_1}$};
\end{tikzpicture}}}
\qquad
\vcenter{\hbox{\begin{tikzpicture}[scale=.4]
\foreach \x in {-1,...,12} {
 \foreach \y in {-1,12} {
  \node[scale=\nodescale] at (\x+.5,\y+.5) {$x_1$};
  }
  }
 \foreach \x in {-1,12} {
 \foreach \y in {-1,...,12} {
  \node[scale=\nodescale] at (\x+.5,\y+.5) {$x_1$};
  }}
\foreach \x in {0,...,11} {
 \foreach \y in {0,...,11} {
   \node[scale=\nodescale] at (\x+.5,\y+.5) {.};
  }
 }
\draw[densely dotted] (-1,-1) grid (13,13);
\draw[thick] (-1,-1) rectangle (13,13);
\fill[white] (1,1) rectangle node[black] {$f$} (4,4);
\fill[white] (8,8) rectangle node[black] {$g$} (11,11);
\node at (6.5,-2) {$h=(f\cdot g)_{x_1} \circ \alpha_{m+1}^2 \circ \beta_{n+1}^2$};
\end{tikzpicture}}}
\]
\[
\vcenter{\hbox{\begin{tikzpicture}[scale=.4]
\foreach \x in {-1,...,12} {
 \foreach \y in {-1,...,12} {
  \node[scale=\nodescale] at (\x+.5,\y+.5) {$x_1$};
  }
  }
\fill[white] (0,0) rectangle (5,5);
\foreach \x in {0,...,4} {
 \foreach \y in {0,...,4} {
   \node[scale=\nodescale] at (\x+.5,\y+.5) {.};
  }
 }
\fill[white] (7,7) rectangle (12,12);
\foreach \x in {7,...,11} {
 \foreach \y in {7,...,11} {
   \node[scale=\nodescale] at (\x+.5,\y+.5) {.};
  }
 }
\draw[densely dotted] (-1,-1) grid (13,13);
\draw[thick] (-1,-1) rectangle (13,13);
\draw[thick] (6,-1) -- (6,13);
\draw[thick] (-1,6) -- (13,6);
\draw[thick] (0,0) rectangle (5,5);
\draw[thick] (7,7) rectangle (12,12);
\fill[white] (1,1) rectangle node[black] {$f$} (4,4);
\fill[white] (8,8) rectangle node[black] {$g$} (11,11);
\node at (6.5,-2) {$f_{x_1} \cdot g_{x_1}$};
\end{tikzpicture}}}
\]
\caption{Various maps used in the proof of Proposition \ref{prop: basepoint}, which shows that $(f\cdot g)_{x_1} \approx (f\cdot g)_{x_1} \circ \alpha_{m+1}^2 \circ \beta_{n+1}^2 \simeq f_{x_1} \cdot g_{x_1}$. (Dots represent the base point $x_0$.)\label{basepointfig}}
\end{figure}

It suffices now to show that $h \simeq f_{x_1} \cdot g_{x_1}$. These maps are pictured in Figure \ref{basepointfig}. We see that $h$ and $f_{x_1} \cdot g_{x_1}$ differ only in certain points whose value under  $h$ is $x_0$, while the value under $f_{x_1} \cdot g_{x_1}$ is $x_1$. But all of these points have adjacent values only of $x_0$ or $x_1$. Thus we may perform repeated spider moves on $h$ which change all of these values of $x_0$ to $x_1$. In this way we obtain a homotopy $h \simeq f_{x_1} \cdot g_{x_1}$ as required.
\end{proof}

Next we show that given a continuous map of based digital images $\phi: (X,x_0) \to (Y,y_0)$, there is a natural  induced homomorphism on the fundamental group. This induced homomorphism is invariant under the following natural type of homotopy: We say that two based maps $\phi, \psi: (X,x_0) \to (Y,y_0)$ are \emph{homotopic relative to the basepoint} or \emph{based homotopic} when there is some continuous $H:X\times I_k \to Y$ such that $H$ is a homotopy from $\phi$ to $\psi$ and $H(x_0,t) = y_0$ for all $t\in I_k$.  

\begin{prop}\label{prop: functor}
If $\phi\colon (X,x_0)\to (Y,y_0)$ is a based digital map between based digital images,   there is an induced homomorphism $\phi_*\colon \pi_2(X,x_0)\to \pi_2(Y,y_0)$ given by $\phi_*([f])=[\phi\circ f]$.

This $*$ operator is functorial in the sense that $(\phi \circ \psi)_* = \phi_* \circ \psi_*$ for any based maps $\phi$ and $\psi$, and also $(\id_X)_* = \id_{\pi_2(X,x_0)}$, where $\id_X$ denotes the identity function of $X$.

Furthermore, if $\phi$ and $\psi$ are based homotopic, then $\phi_* = \psi_*$.
\end{prop}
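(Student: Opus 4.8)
The statement of Proposition~\ref{prop: functor} has three parts: (i) that $\phi_*$ is a well-defined homomorphism, (ii) functoriality and preservation of identities, and (iii) that based-homotopic maps induce equal homomorphisms. Parts (i) and (ii) are essentially formal. For well-definedness: if $f \approx g$ then taking trivial extensions $\bar f \simeq \bar g$ by a homotopy $H$ relative to the boundary, one checks that $\phi \circ \bar f$ is a trivial extension of $\phi \circ f$ (because $\phi(x_0) = y_0$, so $\phi$ carries basepoint padding to basepoint padding), that $\phi \circ H$ is a homotopy relative to the boundary from $\phi \circ \bar f$ to $\phi \circ \bar g$ (continuity of a composite of continuous maps, plus $\phi(x_0)=y_0$ again on the boundary), hence $\phi \circ f \approx \phi \circ g$. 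That $\phi_*$ is a homomorphism follows from the observation that $\phi \circ (f \cdot g) = (\phi \circ f) \cdot (\phi \circ g)$ at the level of maps: this is immediate from the piecewise formula defining $f \cdot g$, again using $\phi(x_0) = y_0$ on the block of basepoint values. Functoriality $(\phi \circ \psi)_*([f]) = [(\phi\circ\psi)\circ f] = [\phi \circ (\psi \circ f)] = \phi_*(\psi_*([f]))$ and $(\id_X)_*([f]) = [\id_X \circ f] = [f]$ are pure associativity of composition.

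\textbf{The substantive part} is (iii). Suppose $\phi, \psi\colon (X,x_0) \to (Y,y_0)$ are based homotopic via $H\colon X \times I_k \to Y$ with $H(x_0, t) = y_0$ for all $t$. Given $f\colon (I_{m,n}, \partial I_{m,n}) \to (X,x_0)$, I want to show $\phi \circ f \approx \psi \circ f$, i.e.\ (since these have the same domain) that $\phi\circ f \simeq \psi \circ f$ by a homotopy relative to the boundary. The natural candidate is the composite homotopy
\[
K\colon I_{m,n} \times I_k \to Y, \qquad K\bigl((a,b), t\bigr) = H\bigl(f(a,b), t\bigr).
\]
I would verify that $K$ is continuous: if $(a,b) \sim (a',b')$ in $I_{m,n}$ and $t \sim t'$ in $I_k$, then $f(a,b) \sim f(a',b')$ in $X$ by continuity of $f$, so $(f(a,b), t) \sim (f(a',b'), t')$ in $X \times I_k$ (categorical product adjacency), and continuity of $H$ gives $K((a,b),t) \sim K((a',b'),t')$. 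Next, $K((a,b), 0) = H(f(a,b), 0) = \phi(f(a,b))$ and $K((a,b), k) = \psi(f(a,b))$, so $K$ is a homotopy from $\phi \circ f$ to $\psi \circ f$. Finally, for $(a,b) \in \partial I_{m,n}$ we have $f(a,b) = x_0$, so $K((a,b), t) = H(x_0, t) = y_0$ for all $t$; thus $K$ is a homotopy relative to the boundary. Hence $[\phi \circ f] = [\psi \circ f]$ in $\pi_2(Y, y_0)$, which says precisely $\phi_* = \psi_*$.

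\textbf{Main obstacle.} There is no serious obstacle; the only point requiring a little care is bookkeeping with the adjacency conventions---specifically, remembering that $X \times I_k$ carries the categorical product adjacency, so that feeding an adjacent pair of points together with an adjacent pair of time coordinates into $H$ is legitimate, and that the restriction $H(x_0, -)$ being constant at $y_0$ is exactly what makes the homotopy relative to the boundary. One should also note at the outset that $\phi(x_0) = y_0$ (part of the definition of a based map) is used repeatedly and silently in part (i) to see that $\phi$ sends trivial extensions to trivial extensions and respects the $\cdot$-operation. Everything else is routine composition of continuous maps.
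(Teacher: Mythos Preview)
Your proof is correct and follows essentially the same approach as the paper's: the homomorphism and functoriality checks are identical chains of equalities, and for part (iii) your homotopy $K((a,b),t) = H(f(a,b),t)$ is exactly the composite $I_{m,n}\times I_k \to X\times I_k \to Y$ the paper invokes. If anything your argument is more complete, since you spell out the well-definedness of $\phi_*$ (which the paper's proof leaves implicit) and verify continuity and the relative-to-boundary condition explicitly.
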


\begin{proof} 
To show that $\phi_*$ is a group homomorphism, we observe:
$$\phi_*([f]\cdot[g])=\phi_*([f\cdot g])=[\phi \circ (f\cdot g)]=[(\phi\circ f)\cdot (\phi \circ g)]=[\phi\circ f]\cdot[\phi\circ g]=\phi_*([f])\cdot\phi_*([g]).$$

It is clear from our definitions that $(\psi\circ \phi)_*([f])=[\psi\circ \phi \circ f]=\psi_*[\phi\circ f]=\psi_*\circ \phi_*([f])$ and $(\id_X)_*([f])=[\id_X\circ f]=[f]=\id_{\pi_2(X,x_0)}[f]$, proving functoriality.

For the last statement, let $\phi, \psi \colon X\to Y$ be based homotopic, and we will show $\phi_*=\psi_*$.  Since $\phi$ and $\psi$ are homotopic, there exists a based homotopy $H\colon X\times I_k\to Y$,  Let $[f]\in \pi_2(X)$. To see that $[\phi\circ f]=[\psi\circ f]$, observe that $I_{m,n}\times I_k\to X\times I_k\to Y$ is a homotopy between $\phi\circ f$ and $\psi \circ g$. Because $H$ is a based homotopy, this homotopy of $\phi\circ f$ and $\psi \circ g$ is a homotopy relative to the boundary, and so $[\phi\circ f]=[\psi\circ f]$.
\end{proof}

Stated more abstractly, the above means that the second homotopy group $\pi_2$ is a functor from the category of based digital images and homotopy classes of digitally continuous maps to the category of abelian groups and group homomorphisms.

Two based digital images $(X,x_0)$ and $(Y,y_0)$ are \emph{based homotopy equivalent} when there are based maps $\phi:(X,x_0) \to (Y,y_0)$ and $\psi:(Y,y_0) \to (X,x_0)$ with $\phi \circ \psi$ and $\psi \circ \phi$ each based homotopic to identity maps on $(X,x_0)$ and $(Y,y_0)$ respectively.

\begin{thm}
Let $(X,x_0)$ and $(Y,y_0)$ be based homotopy equivalent. Then $\pi_2(X,x_0)$ and $\pi_2(Y,y_0)$ are isomorphic.
\end{thm}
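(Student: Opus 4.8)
The plan is to run the standard categorical argument, using that $\pi_2$ is a functor that is moreover invariant under based homotopy, both of which are established in Proposition~\ref{prop: functor}. Let $\phi\colon (X,x_0)\to (Y,y_0)$ and $\psi\colon (Y,y_0)\to (X,x_0)$ be based maps witnessing the based homotopy equivalence, so that $\psi\circ\phi$ is based homotopic to $\id_X$ and $\phi\circ\psi$ is based homotopic to $\id_Y$. By Proposition~\ref{prop: functor} these induce group homomorphisms $\phi_*\colon \pi_2(X,x_0)\to\pi_2(Y,y_0)$ and $\psi_*\colon \pi_2(Y,y_0)\to\pi_2(X,x_0)$.

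Next I would combine the three clauses of Proposition~\ref{prop: functor}. Functoriality gives $\phi_*\circ\psi_* = (\phi\circ\psi)_*$; since $\phi\circ\psi$ is based homotopic to $\id_Y$, the based-homotopy-invariance clause gives $(\phi\circ\psi)_* = (\id_Y)_*$; and the identity clause gives $(\id_Y)_* = \id_{\pi_2(Y,y_0)}$. Chaining these, $\phi_*\circ\psi_* = \id_{\pi_2(Y,y_0)}$, and the symmetric argument with the roles of $X$ and $Y$ reversed yields $\psi_*\circ\phi_* = \id_{\pi_2(X,x_0)}$. Hence $\phi_*$ is a bijective group homomorphism with two-sided inverse $\psi_*$, i.e.\ a group isomorphism, and $\pi_2(X,x_0)\cong\pi_2(Y,y_0)$.

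There is essentially no obstacle to overcome: the argument is purely formal once Proposition~\ref{prop: functor} is available. The only point that deserves a sentence of care—and the only place one could conceivably slip—is checking that the hypotheses of that proposition are genuinely met, namely that the homotopies $\psi\circ\phi\simeq\id_X$ and $\phi\circ\psi\simeq\id_Y$ are homotopies \emph{relative to the basepoint}; but this is exactly what the definition of based homotopy equivalence supplies, so the verification is immediate.
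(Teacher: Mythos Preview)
Your proof is correct and follows exactly the same approach as the paper's: the standard categorical argument invoking functoriality, based-homotopy invariance, and the identity property from Proposition~\ref{prop: functor}. If anything, your version is slightly more careful with the bookkeeping of which composite lands on which space.
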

\begin{proof}
Suppose we have based maps $\phi:(X,x_0) \to (Y,y_0)$ and $\psi:(Y,y_0) \to (X,x_0)$ with $\phi \circ \psi$ and $\psi \circ \phi$ each based homotopic to identity maps on $(X,x_0)$ and $(Y,y_0)$ respectively.  Then it follows from  Proposition~\ref{prop: functor} that we have  $\phi_* \circ \psi_* =  (\phi \circ \psi)_* = (\id_X)_* = \id_{\pi_2(X,x_0)}$ and likewise $\psi_* \circ \phi_* = \id_{\pi_2(Y,y_0)}$.  Hence, each of $\phi_*$ and $\psi_*$ must be an isomorphism.
\end{proof}

\begin{thm}\label{thm: abelian}
Given any pointed digital image $(X,x_0)$, the group $\pi_2(X,x_0)$ is abelian.
\end{thm}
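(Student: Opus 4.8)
The plan is to imitate the classical Eckmann--Hilton argument adapted to this digital, rectangle-based setting. Recall the classical picture: if one has two binary operations on a set that share a two-sided unit and satisfy an interchange law, then they coincide and are both commutative. Here the ``two operations'' will be the horizontal concatenation $f \cdot g$ of Figure~\ref{cdotfig} and a vertical analogue, say $f \ast g$, where $g$ is placed directly \emph{above} $f$ (with appropriate $x_0$-padding) rather than up and to the right. Both operations induce the same group structure on $\pi_2(X,x_0)$ — indeed I first need to verify this — and then the interchange law $(f \cdot g) \ast (h \cdot k) \approx (f \ast h) \cdot (g \ast k)$ will hold at the level of maps up to extension homotopy, essentially by the translation-of-subrectangles principle of Remark~\ref{shiftrem}.

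Here are the steps in order. First, define the vertical product $f \ast g$ on a rectangle $I_{\max(m,r), n+s+1}$ by placing $f$ in the lower-left block, $g$ in the upper block shifted right as needed, and $x_0$ elsewhere — mirroring the definition of $f\cdot g$ with the roles of the two axes appropriate to a ``stacking.'' Second, show $[f] \ast [g] := [f \ast g]$ is well-defined and gives a group operation with the same identity $[c_{x_0}]$; this is a routine transcription of Proposition~\ref{prop: operation well-defined} and the relevant parts of Theorem~\ref{thm: group}, using Theorem~\ref{columndoublehomotopy} and Lemma~\ref{localized homotopy} with the axes swapped. Third — the heart of the argument — establish the interchange identity: given four maps $f,g,h,k$, both $(f\cdot g)\ast(h\cdot k)$ and $(f\ast h)\cdot(g\ast k)$ are, up to extension homotopy, the map on a large rectangle with four disjoint blocks $f, g, h, k$ each surrounded by basepoint padding, arranged with $f$ lower-left, $g$ lower-right, $h$ upper-left, $k$ upper-right; by Remark~\ref{shiftrem} any two such ``four-block'' configurations with the same blocks are extension-homotopic, since each block sits in a region of constant $x_0$ values and may be translated freely. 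Fourth, run the Eckmann--Hilton conclusion: setting two of the four maps to $c_{x_0}$ in the interchange law and using that $c_{x_0}$ is a two-sided unit for both operations yields first that $\cdot$ and $\ast$ agree on $\pi_2$, and then that this common operation is commutative; I would phrase this purely algebraically once the interchange law and shared unit are in hand.

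The main obstacle I anticipate is the bookkeeping in Step~3 — making the translation argument of Remark~\ref{shiftrem} precise enough to move from the literal rectangle produced by each iterated product (whose block positions are dictated rigidly by the definitions of $\cdot$ and $\ast$, and which in general will not even have the same dimensions) to a common canonical four-block configuration. One must first pass to a sufficiently large trivial extension of each side so that both live on the same rectangle $I_{M,N}$, then check that in that rectangle each of the four blocks is genuinely surrounded by a strip of basepoint values (so that Corollary~\ref{shiftcor} and Lemma~\ref{localized homotopy} apply to translate it), and then exhibit an explicit sequence of such translations carrying one configuration to the other. This is conceptually the translation illustrated in Figure~\ref{translationfig} but with four blocks and carried out twice; the details are fiddly but not deep. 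A minor secondary point is to double-check that the vertical product $\ast$ really does induce the \emph{same} operation on $\pi_2$ as $\cdot$ independently of Eckmann--Hilton is \emph{not} needed — the Eckmann--Hilton argument delivers that equality as a byproduct — so I would be careful not to assume it prematurely.
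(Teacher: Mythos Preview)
Your Eckmann--Hilton argument is correct and will go through; the key ingredient is exactly the translation principle of Remark~\ref{shiftrem}, which you invoke in Step~3. However, the paper takes a more direct route that avoids introducing a second operation altogether. Rather than setting up $\ast$ and proving an interchange law, the paper simply shows $f\cdot g \simeq g\cdot f$ by a sequence of block-sliding homotopies on a single rectangle: starting from the diagonal configuration of $f\cdot g$ (with $f$ lower-left, $g$ upper-right), one slides $g$ down, then $f$ up, then $g$ left, then $f$ right, arriving at the diagonal configuration of $g\cdot f$. Each slide is a translation of one block through a region of basepoint values, licensed by Remark~\ref{shiftrem} (via Corollary~\ref{shiftcor} and Lemma~\ref{localized homotopy}), and the fact that both $f$ and $g$ map their own boundaries to $x_0$ means the blocks can pass alongside each other without breaking continuity. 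Your approach buys a cleaner conceptual story and would generalize more readily to higher $\pi_n$; the paper's approach buys economy, since it needs no auxiliary operation, no well-definedness check for $\ast$, and no four-block bookkeeping---just a five-frame cartoon.
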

\begin{proof}
The result follows using translations of the type described in Remark \ref{shiftrem} and which flow from Corollary~\ref{shiftcor} and Lemma~\ref{localized homotopy}. We  have $f\cdot g \simeq g\cdot f$ by homotopies indicated as follows: 
\[
\begin{split}
f\cdot g &=
\vcenter{\hbox{\begin{tikzpicture}[scale=.4]
\draw[thick] (0,0) rectangle (2,3) node[pos=.5] {$f$};
\draw[thick] (2,3) rectangle (6,5) node[pos=.5] {$g$};
\draw[thick] (0,3) rectangle (2,5) node[pos=.5] {$x_0$};
\draw[thick] (2,0) rectangle (6,3) node[pos=.5] {$x_0$};
\end{tikzpicture}
}}
\simeq
\vcenter{\hbox{\begin{tikzpicture}[scale=.4]
\draw[thick] (0,0) rectangle (2,3) node[pos=.5] {$f$};
\draw[thick] (0,3) rectangle (2,5) node[pos=.5] {$x_0$};
\draw[thick] (2,0) rectangle (6,2) node[pos=.5] {$g$};
\draw[thick] (2,2) rectangle (6,5) node[pos=.5] {$x_0$};
\end{tikzpicture}
}}
\simeq
\vcenter{\hbox{\begin{tikzpicture}[scale=.4]
\draw[thick] (0,0) rectangle (2,2) node[pos=.5] {$x_0$};
\draw[thick] (0,2) rectangle (2,5) node[pos=.5] {$f$};
\draw[thick] (2,0) rectangle (6,2) node[pos=.5] {$g$};
\draw[thick] (2,2) rectangle (6,5) node[pos=.5] {$x_0$};
\end{tikzpicture}
}}
\\
&\simeq
\vcenter{\hbox{\begin{tikzpicture}[scale=.4]
\draw[thick] (0,0) rectangle (4,2) node[pos=.5] {$g$};
\draw[thick] (0,2) rectangle (2,5) node[pos=.5] {$f$};
\draw[thick] (4,0) rectangle (6,2) node[pos=.5] {$x_0$};
\draw[thick] (2,2) rectangle (6,5) node[pos=.5] {$x_0$};
\end{tikzpicture}
}}
\simeq
\vcenter{\hbox{\begin{tikzpicture}[scale=.4]
\draw[thick] (0,0) rectangle (4,2) node[pos=.5] {$g$};
\draw[thick] (0,2) rectangle (4,5) node[pos=.5] {$x_0$};
\draw[thick] (4,0) rectangle (6,2) node[pos=.5] {$x_0$};
\draw[thick] (4,2) rectangle (6,5) node[pos=.5] {$f$};
\end{tikzpicture}
}}
= g\cdot f
\end{split}
\]
Recall that both $f$ and $g$ map their boundaries to the base point $x_0$. This means that, in the diagrams above, we may slide these blocks alongside each other without breaking continuity of the intermediate maps.
\end{proof}

The last general result that we give shows that our second homotopy group behaves with respect to products like the second homotopy group in the ordinary topological setting.  That is, our second homotopy group preserves products in the functorial sense.  Recall that, for based digital images $(X, x_0)$ and $(Y, y_0)$, their product $X \times Y$ is the categorical product and its basepoint is the point  $(x_0, y_0) \in X \times Y$.  The product of two (abelian) groups, denoted here by `$\times$,' means their direct product. This result uses the induced homomorphisms just discussed.

 \begin{thm}\label{thm: products}
Let $(X, x_0)$ and $(Y, y_0)$ be any based digital images.  Let $p_1\colon X \times Y \to X$ and $p_2\colon X \times Y \to Y$ denote the projections onto either factor.  Define a map
$$\Psi \colon  \pi_2\big(X \times Y; (x_0, y_0)\big) \to \pi_2(X; x_0)\times \pi_2(Y; y_0),$$
by setting $\Psi([\alpha]) := \big( (p_1)_*([\alpha]), (p_2)_*([\alpha])\big)$ for each $[\alpha] \in  \pi_2\big(X \times Y; (x_0, y_0)\big)$.
Then $\Psi$ is an isomorphism.
\end{thm}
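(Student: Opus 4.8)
The plan is to mimic the classical proof that $\pi_n$ preserves products, constructing an explicit inverse to $\Psi$ at the level of maps. First I would verify that $\Psi$ is a homomorphism: this is immediate from Proposition~\ref{prop: functor}, since each $(p_i)_*$ is a homomorphism and the group operation on the direct product is componentwise, so $\Psi([\alpha]\cdot[\beta]) = \big((p_1)_*([\alpha]\cdot[\beta]), (p_2)_*([\alpha]\cdot[\beta])\big) = \Psi([\alpha])\cdot\Psi([\beta])$.

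The heart of the argument is building a candidate inverse $\Theta\colon \pi_2(X;x_0)\times\pi_2(Y;y_0) \to \pi_2\big(X\times Y;(x_0,y_0)\big)$. Given $f\colon(I_{m,n},\partial I_{m,n})\to(X,x_0)$ and $g\colon(I_{r,s},\partial I_{r,s})\to(Y,y_0)$, I would first use trivial extensions (Theorem~\ref{columndoublehomotopy} and Lemma~\ref{trivialdoubling}) to replace $f$ and $g$ by trivial extensions $\bar f, \bar g\colon I_{M,N}\to X$ (resp.\ $\to Y$) on a common rectangle, and then define the ``diagonal'' map $(\bar f, \bar g)\colon (I_{M,N},\partial I_{M,N})\to (X\times Y,(x_0,y_0))$ by $(\bar f,\bar g)(a,b) = \big(\bar f(a,b),\bar g(a,b)\big)$; this is digitally continuous precisely because $X\times Y$ carries the categorical product adjacency, and it sends the boundary to $(x_0,y_0)$. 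I would set $\Theta([f],[g]) := [(\bar f,\bar g)]$. One must check this is well-defined: independence of the chosen common size follows from Theorem~\ref{columndoublehomotopy} applied coordinatewise, and independence of representatives follows because a homotopy relative to the boundary $H\colon I_{M,N}\times I_k\to X$ from $\bar f$ to $\bar f'$ pairs with a corresponding homotopy (after further common trivial extension) on the $Y$ side to give a homotopy $I_{M,N}\times I_k \to X\times Y$ relative to the boundary --- again using the categorical product adjacency on the homotopy parameter. That $\Theta$ is a homomorphism is checked by noting $(\overline{f_1\cdot f_2}, \overline{g_1\cdot g_2})$ is extension-homotopic to $(\bar f_1,\bar g_1)\cdot(\bar f_2,\bar g_2)$; here the two products are laid out in ``different'' rectangles but are related by the translations of Remark~\ref{shiftrem} together with Lemma~\ref{localized homotopy}, since the extra regions are all constant at the basepoint.

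Finally I would show $\Theta$ and $\Psi$ are mutually inverse. For $\Psi\circ\Theta$: $(p_1)_*[(\bar f,\bar g)] = [p_1\circ(\bar f,\bar g)] = [\bar f] = [f]$ and similarly for $p_2$, so $\Psi\circ\Theta = \id$. For $\Theta\circ\Psi$: given $\alpha\colon(I_{m,n},\partial I_{m,n})\to (X\times Y,(x_0,y_0))$, writing $\alpha = (p_1\circ\alpha, p_2\circ\alpha)$ pointwise, and after trivially extending so that $\overline{p_1\circ\alpha}$ and $\overline{p_2\circ\alpha}$ live on a common rectangle that also trivially extends $\alpha$, the map $(\overline{p_1\circ\alpha}, \overline{p_2\circ\alpha})$ is literally equal to the trivial extension $\bar\alpha$ of $\alpha$ (because the trivial extension of a product map, with basepoint $(x_0,y_0)$, is the product of the trivial extensions of the coordinate maps), hence $\Theta(\Psi([\alpha])) = [\bar\alpha] = [\alpha]$.

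I expect the main obstacle to be the well-definedness of $\Theta$ and, relatedly, that it is a homomorphism: the bookkeeping of common rectangle sizes, the interplay of trivial extension with the pairing construction, and especially re-indexing the two layouts of $(\bar f_1,\bar g_1)\cdot(\bar f_2,\bar g_2)$ versus $(\overline{f_1\cdot f_2},\overline{g_1\cdot g_2})$ via the translation moves of Remark~\ref{shiftrem} all require care. Continuity arguments themselves are routine given the categorical product adjacency, but one must be vigilant that every intermediate homotopy is genuinely relative to the boundary so that it represents a valid extension homotopy.
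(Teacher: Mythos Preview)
Your approach is correct and yields the result, but it differs from the paper's. The paper does not construct an explicit inverse $\Theta$; instead it shows $\Psi$ is surjective by exhibiting, for any $([\alpha],[\beta])$, the preimage $[(\alpha,c_{y_0})]\cdot[(c_{x_0},\beta)]$, and then shows $\Psi$ is injective by a kernel argument: if $p_1\circ\alpha$ and $p_2\circ\alpha$ are each extension-homotopic to constants, one ``same-sizes'' the two nullhomotopies (both in the rectangle dimensions and in the homotopy-interval length) and pairs them into a single nullhomotopy of $\bar\alpha$ in $X\times Y$. Your explicit-inverse route is arguably more informative and makes the isomorphism concrete; note also that once $\Psi\circ\Theta=\id$ and $\Theta\circ\Psi=\id$ are established, your separate verification that $\Theta$ is a homomorphism becomes unnecessary, since the set-theoretic inverse of a bijective homomorphism is automatically a homomorphism.

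One small point you glossed over and should make explicit: when you pair a homotopy $H\colon I_{M,N}\times I_k\to X$ with one $G\colon I_{M,N}\times I_\ell\to Y$, the interval lengths $k$ and $\ell$ need not agree, and ``further common trivial extension'' as defined in the paper only enlarges the rectangle $I_{M,N}$, not the homotopy parameter. You must also pad the shorter homotopy with stationary steps so that both run over a common $I_K$ before forming $(H,G)$. The paper confronts exactly this issue in its injectivity step and handles it by appending stationary steps at the constant map; the same device (or, alternatively, changing one coordinate at a time via $(\bar f,\bar g)\approx(\bar f',\bar g)\approx(\bar f',\bar g')$) works in your argument.
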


\begin{proof}
Because $(p_1)_*$ and $(p_2)_*$ are both well-defined and homomorphisms, it follows that so too is $\Psi$  well-defined and a homomorphism. We show that $\Psi$ is both surjective and injective, and thus an isomorphism.

For surjectivity, suppose that we have $([\alpha], [\beta]) \in \pi_2(X; x_0)\times \pi_2(Y; y_0)$,  with  $\alpha\colon (I_{m,n}, \partial I_{m,n})\to (X,x_0)$ and $\beta\colon (I_{m',n'}, \partial I_{m',n'})\to (Y,y_0)$.  Then the maps $(\alpha, c_{y_0}) \colon I_{m, n} \to X \times Y$ and $(c_{x_0}, \beta) \colon I_{m', n'} \to X \times Y$ represent elements of $\pi_2\big(X \times Y; (x_0, y_0)\big)$.  We have
$$\Psi \big( [(\alpha, c_{y_0})] \cdot [(c_{x_0}, \beta)]\big) = \Psi \big( [(\alpha, c_{y_0})] \big) \Psi \big( [(c_{x_0}, \beta)]\big) = ([\alpha], [c_{y_0}]) ([c_{x_0}], [\beta]) =
([\alpha], [\beta]).$$
It follows that $\Psi$ is surjective.

For injectivity, suppose that we have $[\alpha] \in \pi_2\big(X \times Y; (x_0, y_0)\big)$ represented by a map $\alpha \colon I_{m, n} \to X \times Y$, such that $\alpha \in \ker \Psi$. That is, $\Psi([\alpha]) = ([c_{x_0}], [c_{y_0}]) \in \pi_2(X; x_0)\times \pi_2(Y; y_0)$.  Then $p_1\circ\alpha$ and $p_2\circ\alpha$ are extension-homotopic to constant maps.  
Suppose we have
$\overline{p_1\circ\alpha} \simeq c_{x_0}$ via a homotopy relative to the boundary  $H \colon I_{m', n'} \times I_T \to X$  and $\overline{p_2\circ\alpha} \simeq c_{y_0}$ via a homotopy relative to the boundary $G \colon I_{m'', n''} \times I_S \to Y$. The idea is simply to ``same size" the domains of these homotopies using a process akin to a ``$3$D trivial extension.".
Firstly, we  trivially extend $\overline{p_1\circ\alpha}$ and $\overline{p_2\circ\alpha}$ to a larger common domain  $I_{m''', n'''}$. We continue to write these new trivial extensions as  $\overline{p_1\circ\alpha}$ and $\overline{p_2\circ\alpha}$ . On any point $(i, j)$ of $I_{m''', n'''}$ not in $I_{m', n'}$, extend $H$ by setting $H(i,j, t) = x_0$ for all $t \in I_T$.  Likewise, extend $G$ to the stationary homotopy at $y_0$ on points of $I_{m''', n'''}$ not in $I_{m'', n''}$.  Since $H$ and $G$ were originally homotopies relative to the boundary (stationary at their respective basepoints), these extensions of the homotopies to the larger domains are evidently continuous.   Secondly, if $S \not=T$, then we may lengthen the shorter one by adding stationary steps at the basepoint.  Note that this depends on either homotopy ending at the constant map.  Thus, we have trivially extended our homotopies to  maps $H \colon I_{m''', n'''} \times I_R \to X$ and $G \colon I_{m''', n'''} \times I_R \to Y$, on some common domain.
Then we have a homotopy relative to the boundary
$$\big( H,  G \big) \colon  I_{m''', n'''} \times I_R \to X \times Y$$
from $\big( \overline{p_1\circ\alpha}, \overline{p_2\circ\alpha}\big)$ to $\big( c_{x_0}, c_{y_0}\big)$.  Now may write
$\alpha = ( p_1\circ\alpha, p_2\circ\alpha)$ and hence $\big( \overline{p_1\circ\alpha}, \overline{p_2\circ\alpha}\big)$ as a trivial extension $\overline{\alpha}$ of $\alpha$, and also
$c_{(x_0, y_0)} = \big( c_{x_0}, c_{y_0}\big)$.
it follows that we have $[\alpha] = [c_{(x_0, y_0)} ] \in \pi_2\big(X \times Y; (x_0, y_0)\big)$.  Hence $\Psi$ is injective and the result follows.
\end{proof}

%

\section{A triangle-counting function on $\pi_2(S^2)$}\label{sec: degree}

We now turn to the computation of  the second homotopy group of a digital two-sphere.
Various models have appeared in the literature of sphere-like digital images (see \cite{Evako2015} for instance). We will define the digital sphere $S^n$ as in classical geometry as the set of points in $(n+1)$-dimensional space at unit distance from the origin. In the digital setting, namely with points from $\Z^n$, this includes only points with a single nonzero coordinate of magnitude 1.

Specifically let $\be_i = (0,\dots, 0,1,0,\dots,0)$ be the $i$-th standard basis vector in $\Z^{n+1}$, and define \emph{the digital $n$-sphere}
\[ S^n := \{ \pm\be_1,\dots, \pm\be_{n+1} \}. \]
By $S^n$ we always mean this digital image of $2n$ points, rather than the classical manifold $S^n$. We always consider $S^n$ as a digital image with $c_n$ adjacency, that is, two points are adjacent when they differ by at most 1 in each of their coordinates.

From now on, we deal exclusively with $S^2$, the set of 6 points whose (non-self-) adjacencies form the octahedral graph:
\tikzset{vertex/.style={circle,draw,fill,inner sep=0pt,minimum size=1mm}}
\[
S^2:
\vcenter{\hbox{
\begin{tikzpicture}[scale=1.5]
\node[vertex,label={[right]$\be_1$}] (p1) at (1,0,0) {};
\node[vertex,label={[left]$-\be_1$}] (m1) at (-1,0,0) {};
\node[vertex,label=$\be_3$] (p3) at (0,1,0) {};
\node[vertex,label={[below=.1cm]$-\be_3$}] (m3) at (0,-1,0) {};
\node[vertex,label=$\be_2$] (p2) at (0,0,1) {};
\node[vertex,label=$-\be_2$] (m2) at (0,0,-1) {};
\foreach \x in {p2,m2,p3,m3} {
 \draw[densely dotted] (p1) -- (\x) -- (m1);
}
\draw[densely dotted] (p2) -- (p3) -- (m2) -- (m3) -- (p2);
\end{tikzpicture}
}}
\]
For $a,b\in S^2$ with our chosen adjacency, we will have $a \sim b$ if and only if $a \neq -b$.  As discussed between Definitions \ref{def: continuous} and 
\ref{homotopydef},  we should really add loops at each vertex in the figure above to represent our $S^2$ strictly from the graph-theoretic point of view.  But we typically suppress these self-adjacencies from our figures.

The rest of the paper is concerned with showing that we have a group isomorphism $\pi_2(S^2, -\be_1) \cong \Z$.
The main tool we use for showing this is the following \emph{triangle-counting function} for maps from the rectangle into $S^2$.

We may view a map $f: (I_{m,n},\partial I_{m,n}) \to (S^2,-\mathbf{e}_1)$ as a labeling of the points of the rectangle $I_{m,n}$ with values from $S^2 = \{  \pm \mathbf{e}_1, \pm \mathbf{e}_2, \pm\mathbf{e}_3\}$.  Furthermore, we choose the triangulation of the rectangle $I_{m,n}$ that uses horizontal and vertical  edges (adjacencies)  between points, together with all positively sloped diagonal edges (adjacencies). (Here, we are setting aside all negatively sloped diagonal edges (adjacencies)  that also exist in the adjacency relation of $\Z^2$.) 

\begin{defn}[Triangle-counting function]
With the conventions above, define an integer $d(f)$ as the signed sum of the number of oriented triangles  labeled $\langle \mathbf{e}_1, \mathbf{e}_2, \mathbf{e}_3\rangle$ in this triangulation.  By oriented triangles, we mean that a triangle labeled $\langle \mathbf{e}_1, \mathbf{e}_2, \mathbf{e}_3\rangle$ in a counter-clockwise sense counts as $+1$ and a  triangle labeled $\langle \mathbf{e}_1, \mathbf{e}_2, \mathbf{e}_3\rangle$ in a clockwise sense counts as $-1$.
\end{defn}
\begin{figure}
\centering
$$
\begin{tikzpicture}[scale=1.3]

\node[inner sep=1.75pt, circle, fill=black] (v00) at (0,0) [draw] {};
\node[inner sep=1.75pt, circle, fill=black] (v01) at (0,1) [draw] {};
\node[inner sep=1.75pt, circle, fill=black] (v02) at (0,2) [draw] {};
\node[inner sep=1.75pt, circle, fill=black] (v03) at (0,3) [draw] {};
\node[inner sep=1.75pt, circle, fill=black] (v04) at (0,4) [draw] {};
\node[inner sep=1.75pt, circle, fill=black] (v10) at (1,0) [draw] {};
\node[inner sep=1.75pt, circle, fill=black] (v11) at (1,1) [draw] {};
\node[inner sep=1.75pt, circle, fill=black] (v12) at (1,2) [draw] {};
\node[inner sep=1.75pt, circle, fill=black] (v13) at (1,3) [draw] {};
\node[inner sep=1.75pt, circle, fill=black] (v14) at (1,4) [draw] {};
\node[inner sep=1.75pt, circle, fill=black] (v20) at (2,0) [draw] {};
\node[inner sep=1.75pt, circle, fill=black] (v21) at (2,1) [draw] {};
\node[inner sep=1.75pt, circle, fill=black] (v22) at (2,2) [draw] {};
\node[inner sep=1.75pt, circle, fill=black] (v23) at (2,3) [draw] {};
\node[inner sep=1.75pt, circle, fill=black] (v24) at (2,4) [draw] {};
\node[inner sep=1.75pt, circle, fill=black] (v30) at (3,0) [draw] {};
\node[inner sep=1.75pt, circle, fill=black] (v31) at (3,1) [draw] {};
\node[inner sep=1.75pt, circle, fill=black] (v32) at (3,2) [draw] {};
\node[inner sep=1.75pt, circle, fill=black] (v33) at (3,3) [draw] {};
\node[inner sep=1.75pt, circle, fill=black] (v34) at (3,4) [draw] {};
\node[inner sep=1.75pt, circle, fill=black] (v40) at (4,0) [draw] {};
\node[inner sep=1.75pt, circle, fill=black] (v41) at (4,1) [draw] {};
\node[inner sep=1.75pt, circle, fill=black] (v42) at (4,2) [draw] {};
\node[inner sep=1.75pt, circle, fill=black] (v43) at (4,3) [draw] {};
\node[inner sep=1.75pt, circle, fill=black] (v44) at (4,4) [draw] {};
\node[inner sep=1.75pt, circle, fill=black] (v50) at (5,0) [draw] {};
\node[inner sep=1.75pt, circle, fill=black] (v51) at (5,1) [draw] {};
\node[inner sep=1.75pt, circle, fill=black] (v52) at (5,2) [draw] {};
\node[inner sep=1.75pt, circle, fill=black] (v53) at (5,3) [draw] {};
\node[inner sep=1.75pt, circle, fill=black] (v54) at (5,4) [draw] {};

\draw (v00) -- (v50);
\draw (v01) -- (v51);
\draw (v02) -- (v52);
\draw (v03) -- (v53);
\draw (v04) -- (v54);
\draw (v00) -- (v04);
\draw (v10) -- (v14);
\draw (v20) -- (v24);
\draw (v30) -- (v34);
\draw (v40) -- (v44);
\draw (v50) -- (v54);
\draw (v03) -- (v14);
\draw (v02) -- (v24);
\draw (v01) -- (v34);
\draw (v00) -- (v44);
\draw (v10) -- (v54);
\draw (v20) -- (v53);
\draw (v30) -- (v52);
\draw (v40) -- (v51);

\foreach \i in {0, 1, 2, 3, 4, 5} {
\node[below]  at (\i,0) {$-\mathbf{e}_1$};
\node[above]  at (\i,4) {$-\mathbf{e}_1$};
}
\foreach \j in {1, 2, 3} {
\node[left]  at (0,\j) {$-\mathbf{e}_1$};
\node[right]  at (5,\j) {$-\mathbf{e}_1$};
}
\node[above left]  at (1,1) {$\mathbf{e}_3$};
\node[above left]  at (2,1) {$\mathbf{e}_3$};
\node[above left]  at (3,1) {$\mathbf{e}_3$};
\node[above left]  at (4,1) {$\mathbf{e}_3$};
\node[above left]  at (1,2) {$\mathbf{e}_2$};
\node[above left]  at (2,2) {$\mathbf{e}_1$};
\node[above left]  at (3,2) {$\mathbf{e}_1$};
\node[above left]  at (4,2) {$\mathbf{e}_2$};
\node[above left]  at (1,3) {$-\mathbf{e}_3$};
\node[above left]  at (2,3) {$\mathbf{e}_2$};
\node[above left]  at (3,3) {$\mathbf{e}_3$};
\node[above left]  at (4,3) {$\mathbf{e}_2$};

\node[scale=1.5]  at ((1.3, 1.7) {$+1$};
\node[scale=1.5]  at ((2.3, 2.7) {$-1$};
\node[scale=1.5]  at ((3.3, 2.7) {$+1$};
\node[scale=1.5]  at ((3.3, 1.7) {$-1$};

\end{tikzpicture}
$$
\caption{Triangulation of $I_{m,n}$ shown with $m=5$ and $n=4$.  Labelling with points from $S^2$ shows a map with $d(f) = 0$. }\label{fig: Example of map}
\end{figure}
\begin{ex}
Take the map $f: (I_{5,4},\partial I_{5,4}) \to (S^2,-\mathbf{e}_1)$ whose values on the points of $I_{5,4}$ are as specified in Figure~\ref{fig: Example of map}.  We have triangulated $I_{5,4}$ in the way described above.  Note that, although we have not included diagonal edges of slope $-1$ in our triangulation, pairs of points that would be connected by such must be labeled with adjacent values from $S^2$ as well, to preserve continuity of the map $f$.  We find that there are two triangles labeled $\langle \mathbf{e}_1, \mathbf{e}_2, \mathbf{e}_3\rangle$ in a counter-clockwise sense and two in a clockwise sense, leading to a signed sum of $0$.  We will see that $d$ provides a function $d:\pi_2(S^2, -\mathbf{e}_1) \to \Z$, which we will eventually prove is a group isomorphism.
Thus, this map represents the trivial element  in $\pi_2(S^2, -\mathbf{e}_1)$, namely it must be extension-homotopic to the constant map $c_{-\mathbf{e}_1} \in S^2$.
\end{ex}

First we show that the triangle-counting function is preserved by extension-homotopy.

\begin{lem}
If $f\approx g$, then we have $d(f) = d(g)$. Thus, the integer $d(f)$ described above induces a well-defined \emph{induced triangle-counting function} $d: \pi_2(S^2, -\mathbf{e}_1) \to \Z$, given by setting $d([f]) = d(f)$.
\end{lem}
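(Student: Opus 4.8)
The plan is to peel off the two operations that generate extension homotopy and check that $d$ is unaffected by each. By Definition~\ref{def: extn homotopy}, $f\approx g$ exactly when there are trivial extensions $\bar f,\bar g$ on a common rectangle that are homotopic relative to the boundary, so it suffices to prove: (i) $d(\bar f)=d(f)$ whenever $\bar f$ is a trivial extension of $f$; and (ii) $d$ is constant on homotopy-rel-boundary classes. Step (i) is immediate: the triangulation of the larger rectangle contains every triangle of the smaller one carrying the same labels, together with new triangles each of which has a vertex lying outside the original rectangle, hence labeled $-\mathbf{e}_1$; a triangle with a vertex labeled $-\mathbf{e}_1$ cannot be labeled $\langle\mathbf{e}_1,\mathbf{e}_2,\mathbf{e}_3\rangle$, so it contributes $0$ to $d$. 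Thus $d(\bar f)=d(f)$.

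For step (ii) I would encode $d$ cohomologically and run a prism argument. Let $\phi$ be the alternating simplicial $2$-cochain on the octahedron $S^2$ with $\phi(\langle\mathbf{e}_1,\mathbf{e}_2,\mathbf{e}_3\rangle)=1$ and $\phi=0$ on the other seven triangular faces; extending $\phi$ by $0$ to degenerate triples, one has $d(f)=\langle f^{\#}\phi,[I_{m,n}]\rangle$ for every $f\colon(I_{m,n},\partial I_{m,n})\to(S^2,-\mathbf{e}_1)$, where $[I_{m,n}]$ is the fundamental chain of the chosen (positive-diagonal) triangulation with its standard counter-clockwise orientation. Since the octahedron is $2$-dimensional, $\delta\phi=0$. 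By reading off consecutive time-slices it is enough to treat a one-step homotopy rel boundary $H\colon I_{m,n}\times I_1\to S^2$ from $f=H(\cdot,0)$ to $g=H(\cdot,1)$. Triangulate the solid prism $I_{m,n}\times I_1$ into tetrahedra all of whose edges are $c_3$-adjacencies of $\Z^3$ and which restrict on the top and bottom faces to the chosen triangulation of $I_{m,n}$ (the standard prism decomposition induced by the lexicographic order on vertices does this). Since $H$ is digitally continuous it carries any $c_3$-clique to a clique of the octahedron graph, hence $H$ is a simplicial map for these triangulations, and so $H^{\#}\phi$ is a simplicial cocycle on the triangulated prism. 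Evaluating on the boundary $3$-cycle gives $0=\langle H^{\#}\phi,\partial[\mathrm{prism}]\rangle$; but $\partial[\mathrm{prism}]$ equals (top, standard orientation) $-$ (bottom, standard orientation) $+$ (side faces), $H$ restricts to $g$ on top and $f$ on bottom, and $H$ is constant at $-\mathbf{e}_1$ on the side faces $\partial I_{m,n}\times I_1$ so $\phi$ vanishes there. Hence $0=d(g)-d(f)$. Combining (i) and (ii), $f\approx g$ gives $d(f)=d(\bar f)=d(\bar g)=d(g)$, so $d([f]):=d(f)$ is well-defined on $\pi_2(S^2,-\mathbf{e}_1)$.

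I expect the main obstacle to be step (ii): setting up a triangulation of the prism whose edges are all $c_3$-adjacencies and which is compatible with the given triangulations of the top and bottom faces, verifying that $H$ is genuinely simplicial onto the octahedron, and keeping the orientation bookkeeping straight so that the boundary-cycle identity reads off exactly $d(g)-d(f)$. An alternative route to step (ii) is to invoke Theorem~\ref{spidermovethm} to reduce to a single spider move at an interior vertex and argue locally on the hexagonal star of that vertex: the two maps on that star agree on its boundary hexagon and both take values in the full subcomplex of $S^2$ spanned by the common neighbours of the two values at the moved vertex, which (the two values being adjacent and distinct) is a union of two triangles along an edge, hence a contractible disk, forcing the associated "degree difference" over the star to vanish; the care needed there is to ensure the spider moves realizing a homotopy \emph{rel boundary} can be taken at interior vertices.
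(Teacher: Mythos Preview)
Your proposal is correct, and your step (i) matches the paper exactly. For step (ii), however, you take a genuinely different route from the paper. The paper follows what you list as your \emph{alternative}: it invokes Theorem~\ref{spidermovethm} to reduce to a single spider move at an interior point $(a,b)$, then analyzes the hexagonal star around $(a,b)$ directly. If either of the two values $f(a,b),g(a,b)$ lies in $\{-\be_1,-\be_2,-\be_3\}$, the hexagon can contain no $\langle\be_1,\be_2,\be_3\rangle$ triangle at all; otherwise $f(a,b)=\be_i$ and $g(a,b)=\be_j$ force the six surrounding labels into $\{\be_i,\be_j,\pm\be_k\}$, and the paper reads the boundary hexagon as a closed walk $\gamma$ in that four-point subgraph. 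The oriented triangle count for $f$ over the hexagon equals the signed count of $(\be_j,\be_k)$ edges in $\gamma$, and for $g$ the signed count of $(\be_k,\be_i)$ edges; the identity ``in-degree $=$ out-degree at $\be_k$'' for the closed walk $\gamma$ then forces these two counts to agree. This is entirely elementary and self-contained, requiring no simplicial machinery.

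Your primary route---encoding $d(f)=\langle f^\#\phi,[I_{m,n}]\rangle$ for the cocycle $\phi$ supported on the single face $\langle\be_1,\be_2,\be_3\rangle$ of the octahedron and running a prism/Stokes argument---is more conceptual and has the virtue of generalizing immediately (the same argument shows that pairing with \emph{any} simplicial $2$-cocycle on a $2$-dimensional target is an extension-homotopy invariant). The cost is exactly the bookkeeping you flag: you must exhibit a triangulation of $I_{m,n}\times I_1$ whose $1$-skeleton consists of $c_3$-edges and which restricts to the chosen positive-diagonal triangulation on both top and bottom, and then verify that $H$ is simplicial (which holds because the octahedron has no $4$-clique, so every tetrahedron maps to a degenerate $3$-simplex). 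Your concern about spider moves occurring only at interior vertices is easily dispatched: the decomposition in Theorem~\ref{spidermovethm} proceeds by changing one value at a time between consecutive time-slices of $H$, and a homotopy relative to the boundary never changes a boundary value, so every spider move is interior.
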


\begin{proof}
Let $f \colon (I_{m,n},\partial I_{m,n}) \to (S^2, -\mathbf{e}_1)$  and $g: (I_{m',n'},\partial I_{m',n'}) \to (S^2, -\mathbf{e}_1)$ be maps with $f\approx g$.  Recall that this means there are trivial extensions $\overline{f}$ of $f$ and $\overline{g}$ of $g$ such that $\overline{f}$ and $\overline{g}$ are defined on the same-sized rectangle as each other and are homotopic via a homotopy relative to the boundary.  Firstly, it is clear that we have $d(f) = d(\overline{f})$ and $d(g) = d(\overline{g})$, since a trivial extension preserves the labels of the original rectangle and simply labels additional points in the larger containing rectangle with $-\mathbf{e}_1$, thereby preserving all the original triangles labeled $\langle \mathbf{e}_1, \mathbf{e}_2, \mathbf{e}_3\rangle$ and not introducing any additional ones.  So, it is sufficient to show that we have $d(f)= d(g)$ when $f$ and $g$ are homotopic (and defined on the same-sized rectangle as each other).

By Theorem \ref{spidermovethm}, any homotopy can be effected by a sequence of spider moves which change only one point at a time. Thus, it suffices to show that $d(f) =d(g)$ when $f$ and $g$ are homotopic by a spider move that changes the values of $f$ at only one point.  So, we consider the situation in which $f, g \colon (I_{m,n},\partial I_{m,n}) \to (S^2, -\mathbf{e}_1)$ differ in value only at the point $(a,b) \in I_{m,n}$, for some $a$ and $b$ with $0<a<m$ and $0<b<n$ (recall that we  do not change the value of boundary points through our homotopy).

Since the  spider move only changes the map at the point $(a,b)$, with everything else remaining unchanged, the signed counts of triangles $d(f)$ and $d(g)$ may only differ according as the counts of triangles labeled $\langle \mathbf{e}_1, \mathbf{e}_2, \mathbf{e}_3\rangle$ differ in what we will call \emph{the hexagon} of labeled points in $I_{m, n}$, namely the points (labeled in their respective positions) illustrated in Figure~\ref{fig: hexagon}.
\begin{figure}
\[
\begin{tikzpicture}[scale=2.1,every node/.style={scale=.7}]
\node[vertex,label={[below right]$f(a,b)\sim g(a,b)$}] at (0,0) {};
\node[vertex,label={[right]$f(a+1,b+1)= g(a+1,b+1)$}] at (1,1) {};
\node[vertex,label={[above]$f(a,b+1)= g(a,b+1)$}] at (0,1) {};
\node[vertex,label={[left]$f(a-1,b)= g(a-1,b)$}] at (-1,0) {};
\node[vertex,label={[left]$f(a-1,b-1)= g(a-1,b-1)$}] at (-1,-1) {};
\node[vertex,label={[below]$f(a,b-1)= g(a,b-1)$}] at (0,-1) {};
\node[vertex,label={[right]$f(a+1,b)= g(a+1,b)$}] at (1,0) {};
\draw (1,0) -- (1,1) -- (0,1) -- (-1,0) -- (-1,-1) -- (0,-1) -- (1,0);
\draw (-1,0) -- (1,0);
\draw (0,-1) -- (0,1);
\draw (-1,-1) -- (1,1);
\end{tikzpicture}
\]
\caption{The hexagon of the $6$ triangles in $I_{m,n}$ surrounding $(a, b)$ labelled with points from $S^2$. }\label{fig: hexagon}
\end{figure}

If either $f(a,b)$ or  $g(a,b)$ has a value from $\{ -\mathbf{e}_1, -\mathbf{e}_2, -\mathbf{e}_3\}$ then the hexagon does not contribute any triangles labeled $\langle \mathbf{e}_1, \mathbf{e}_2, \mathbf{e}_3\rangle$ either before or after the spider move.  This follows because both $f(a,b)$ and  $g(a,b)$ must be adjacent to all values taken by points   of the hexagon---including both $f(a,b)$ and $g(a,b)$.  But if one of $f(a,b)$ or $g(a,b)$ takes a negative value $-\mathbf{e}_i$, then its positive counterpart $\mathbf{e}_i$ must be absent from the hexagon---again including both values $f(a,b)$ and $g(a,b)$.  However, we need all three values  $\{ \mathbf{e}_1, \mathbf{e}_2, \mathbf{e}_3\}$ to be taken on the hexagon by $f$ or $g$ in order to have any triangles to count at all.  Thus, in this case the
signed sums $d(f)$ and $d(g)$ are determined entirely by triangles in $I_{m,n}$ not involving the point $(a,b)$, and on which $f$ and $g$ agree.  That is, we have $d(f) = d(g)$.

It remains to consider the cases in which both of $f(a,b)$ and  $g(a,b)$ have (different) values from $\{ \mathbf{e}_1, \mathbf{e}_2, \mathbf{e}_3\}$.  Say $f(a,b) = \be_i$ and $g(a,b) = \be_j$. Let $k$ be the third value so that no two of $\{\be_i, \be_j, \be_k\}$ are equal. Without loss of generality, assume that $j = i+1 \mod 3$, which means that $k = i+2\mod 3$. Then the oriented count of $\langle \be_1,\be_2,\be_3\rangle$ triangles will equal the oriented count of $\langle \be_i,\be_j,\be_k\rangle$ triangles, so we can compute $d(f)$ and $d(g)$ by counting $\langle \be_i,\be_j,\be_k\rangle$ triangles.

Consider the values assigned to the $6$ points of the hexagon that surround $(a, b)$.  Since each of these points must be labelled with values in $S^2$ adjacent to both $\mathbf{e}_i$ and $\mathbf{e}_j$, all $6$ points must be labelled from amongst the points $\{ \mathbf{e}_i, \mathbf{e}_j, \pm \mathbf{e}_k \}\subseteq S^2$. If none of these $6$ points is labeled  $\mathbf{e}_k$, then the hexagon does not display any triangles labeled $\langle \mathbf{e}_i, \mathbf{e}_j, \mathbf{e}_k\rangle$  either before or after this spider move.  So, further assume that at least one of the $6$ points of the hexagon that surround $(a, b)$ is labeled  $\mathbf{e}_k$.  Starting at one such point,  travel counter-clockwise in a loop around the six vertices, listing the values with which they are labeled.  The result is a circuit $\gamma$ of length at most $6$
\begin{equation}\label{eq: length-6 circuit}
\gamma = (\mathbf{e}_k, v_1, v_2, v_3, v_4, v_5, \mathbf{e}_k)
\end{equation}
in the subgraph $\{ \mathbf{e}_i, \mathbf{e}_j, \pm \mathbf{e}_k\}$  of $S^2$, pictured with its adjacencies in Figure~\ref{fig: 6-point circuit}.
\begin{figure}
\centering

$$
\begin{tikzpicture}[scale=1.3]

\node[inner sep=1.75pt, circle, fill=black] (e3) at (0,0) [draw] {};
\node[inner sep=1.75pt, circle, fill=black] (e1) at (-1,1) [draw] {};
\node[inner sep=1.75pt, circle, fill=black] (f3) at (-2,0) [draw] {};
\node[inner sep=1.75pt, circle, fill=black] (e2) at (-1,-1) [draw] {};

\draw (e3) -- (e1);
\draw (e3) -- (e2);
\draw (e1) -- (e2);
\draw (e1) -- (f3);
\draw (e2) -- (f3);

\node[right]  at (e3) {{$\mathbf{e}_k$}};
\node[above]  at (e1) {{$\mathbf{e}_i$}};
\node[left]  at (f3) {{$-\mathbf{e}_k$}};
\node[below]  at (e2) {{$\mathbf{e}_j$}};

\end{tikzpicture}
$$
\caption{Subgraph of $S^2$ on which we have the circuit (\ref{eq: length-6 circuit}).  }\label{fig: 6-point circuit}
\end{figure}

Let $N(f)$ be the oriented count of $\langle \be_i, \be_j,\be_k\rangle$ triangles of $f$ occuring in the hexagon of Figure \ref{fig: hexagon}. Let $N(g)$ be the same count in the hexagon for $g$. Since $f$ and $g$ agree outside of the hexagon, we need only show that $N(f)=N(g)$.

Since $f$ maps the center point to $\be_i$, and $\gamma$ is oriented clockwise around the hexagon, the count $N(f)$ will equal the oriented count of the number of edges $(\be_j, \be_k)$ in $\gamma$. Similarly, the count $N(g)$ equals the oriented count of the number of edges $(\be_k,\be_i)$ in $\gamma$.

For some edge given as a vertex-pair $(v,v')$, let $K_\gamma (v,v')$ be the number of occurrences of the edge $(v,v')$ in $\gamma$. Then the above means that:
\begin{align*}
N(f) &= K_\gamma(\be_j,\be_k) - K_\gamma(\be_k,\be_j), \\
N(g) &= K_\gamma(\be_k,\be_i) - K_\gamma(\be_i,\be_k).
\end{align*}

Since $\gamma$ is a cycle based at $\be_k$, its in-degree at $\be_k$ must match its out-degree at $\be_k$. That is, $K_\gamma(\be_i,\be_k) + K_\gamma(\be_j,\be_k)  = K_\gamma(\be_k,\be_i) + K_\gamma(\be_k,\be_j)$. Combining this with the equations just displayed gives $N(f)-N(g) = 0$ and so $N(f)=N(g)$ as desired.
\end{proof}

\begin{prop}\label{prop: induced d a hom}
The induced triangle-counting function $d: \pi_2(S^2,-\be_1) \to \Z$ is a group homomorphism.  In particular, we have $d\left( [c_{-\be_1}]\right) = 0$ and $d\left( [f]^{-1}\right) =d\left( [f^{-1}]\right) = -d\left( [f]\right)$ for any $[f] \in \pi_2(S^2,-\be_1)$.
\end{prop}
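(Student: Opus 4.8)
The plan is to establish the homomorphism property by proving the sharper representative-level identity $d(f\cdot g) = d(f) + d(g)$ for all maps $f\colon (I_{m,n},\partial I_{m,n})\to (S^2,-\be_1)$ and $g\colon(I_{r,s},\partial I_{r,s})\to(S^2,-\be_1)$. Because $d$ is well-defined on extension-homotopy classes by the preceding lemma, and $[f]\cdot[g]=[f\cdot g]$ by definition of the group operation, this at once yields $d([f]\cdot[g])=d([f])+d([g])$. So the whole content is a bookkeeping statement about triangle counts in the domain $I_{m+r+1,n+s+1}$ of $f\cdot g$.

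To carry this out, I would partition the triangles of the (positive-diagonal) triangulation of $I_{m+r+1,n+s+1}$ according to the blocks $B_f = [0,m]_\Z\times[0,n]_\Z$ and $B_g = [m+1,m+r+1]_\Z\times[n+1,n+s+1]_\Z$ (which are disjoint) and the rest. The key geometric fact is that the three vertices of any triangle in this triangulation are pairwise within $\ell^\infty$-distance $1$; hence a triangle with a vertex in the interior of $B_f$ (a point $(a,b)$ with $0<a<m$, $0<b<n$) has all three vertices in $B_f$, and likewise for $B_g$. From this I would argue three things: first, the triangles contained in $B_f$ are exactly the triangles of the triangulation defining $d(f)$, with the same counterclockwise orientation convention, and $(f\cdot g)$ restricted to $B_f$ equals $f$, so these triangles contribute exactly $d(f)$; second, the triangles contained in $B_g$ contribute exactly $d(g)$, using that integer translation by $(m+1,n+1)$ carries the triangulation of $I_{r,s}$ onto that of $B_g$ and preserves orientations, and that $(f\cdot g)$ on $B_g$ is $g$ translated; third, every remaining triangle has no vertex in the interior of $B_f$ or of $B_g$, hence all of its vertices lie on $\partial B_f$, on $\partial B_g$, or outside both blocks, and in each case the vertex is labeled $-\be_1$ (since $f,g$ send their boundaries to $-\be_1$ and $f\cdot g$ is constantly $-\be_1$ off the blocks), so such a triangle is not a $\langle\be_1,\be_2,\be_3\rangle$ triangle and contributes $0$. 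Adding the three contributions gives $d(f\cdot g)=d(f)+d(g)$.

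For the two special cases: $d([c_{-\be_1}])=0$ is immediate since a constant map at $-\be_1$ uses none of the labels $\be_1,\be_2,\be_3$, so there are no oriented $\langle\be_1,\be_2,\be_3\rangle$ triangles whatsoever (and in any case a homomorphism into $\Z$ must send the identity to $0$). For inverses, apply the homomorphism property to $[f]\cdot[f]^{-1}=[c_{-\be_1}]$ to get $d([f])+d([f]^{-1}) = d([c_{-\be_1}])=0$, so $d([f]^{-1})=-d([f])$; and since Theorem~\ref{thm: group} identifies $[f^{-1}]$ as the (two-sided) inverse of $[f]$, also $d([f^{-1}])=-d([f])$.

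I expect the main obstacle to be the careful verification in the third point above — that no triangle straddling the two blocks or meeting the surrounding basepoint region can carry all three labels $\be_1,\be_2,\be_3$ — together with confirming that the triangulation convention (positive-slope diagonals only) and the orientation signs transport correctly under the translation that embeds $B_g$ in the large rectangle. These points are elementary but need to be spelled out, all the more so because the ``obvious'' direct approach to the inverse statement via the horizontal reflection $f^{-1}(a,b)=f(m-a,b)$ does not work cleanly (reflection converts positive-slope diagonals into negative-slope ones), which is precisely why the inverse formula is better obtained from the homomorphism property than from a direct triangle count.
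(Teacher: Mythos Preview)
Your proposal is correct and follows essentially the same approach as the paper: both argue that $d(f\cdot g)=d(f)+d(g)$ because the juxtaposition cannot create any new $\langle \be_1,\be_2,\be_3\rangle$ triangles at the seam (all vertices there are labeled $-\be_1$), and both derive the identity and inverse formulas formally from the homomorphism property rather than by a direct count. Your version is more carefully spelled out---the explicit partition into $B_f$, $B_g$, and the rest, together with the $\ell^\infty$-distance remark, is exactly the bookkeeping the paper's one-sentence ``no opportunity for the formation of new triangles'' is gesturing at---and your closing observation about why the reflection approach to $d(f^{-1})=-d(f)$ fails (positive diagonals become negative ones) matches the paper's own remark that this case ``is not so easy to see directly.''
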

\begin{proof}
Let $f,g:(I_{m,n},\partial I_{m,n}) \to (S^2, -\be_1)$ be two maps representing elements of $\pi_2(S^2,-\be_1)$. It is sufficient to show that $d(f\cdot g) = d(f) + d(g)$.

Recall that  $f\cdot g$ simply juxtaposes the grids defining $f$ and $g$ into a larger grid. Since $f$ and $g$ each individually map their boundary rectangles to the base point, there is no opportunity for the formation of new triangles involving $\be_1,\be_2,\be_3$ where the grids of $f$ and $g$ meet. Thus the total oriented count of $\langle \be_1,\be_2,\be_3\rangle$ triangles of $f\cdot g$ will equal the sum of the oriented counts for each of $f$ and $g$, which is to say $d(f\cdot g) = d(f)+d(g)$.

The last two assertions follow formally for any group homomorphism $d\colon G \to \Z$: with $G$ any group, we have $d(e) = 0$ and $d(g^{-1}) = - d(g)$ for any $g \in G$ and $e\in G$ the identity element.   The first of these is also easy to see directly: a constant map  $c_{-\be_1}$  contains no triangles labeled $\langle \mathbf{e}_1, \mathbf{e}_2, \mathbf{e}_3\rangle$.  The second is not so easy to see directly, as passing from $f$ to $f^{-1}$ involves re-labeling the points of a rectangle whilst preserving the triangulation---see the examples of $T, T^{-1} \colon (I_{4,4},\partial I_{4,4}) \to (X,-\be_1)$ given below. 
\end{proof}

We can now state the main result of this part of the paper: 

\begin{thm}\label{thm: d iso}
The induced triangle-counting function $d: \pi_2(S^2,-\be_1) \to \Z$ is a group isomorphism.
\end{thm}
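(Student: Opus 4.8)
The map $d$ is a well-defined group homomorphism by the preceding Lemma together with Proposition~\ref{prop: induced d a hom}, so it remains only to prove that it is bijective.

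\textbf{Surjectivity.} The plan is to exhibit a single ``degree one'' map and take powers. I would write down an explicit $T\colon (I_{4,4},\partial I_{4,4})\to (S^2,-\be_1)$ whose interior $3\times 3$ block of labels runs once around the rim $4$-cycle $\be_2,\be_3,-\be_2,-\be_3$ of the octahedron with a single $\be_1$ in the centre, arranged so that exactly one oriented $\langle\be_1,\be_2,\be_3\rangle$ triangle occurs; a direct inspection of the definition of $d$ then gives $d(T)=1$ (this is the map referred to after Proposition~\ref{prop: induced d a hom}). Since $d$ is a homomorphism, $d\big([T]^{k}\big)=k$ and $d\big([T^{-1}]^{k}\big)=-k$ for all $k\ge 0$, so $d$ is onto $\Z$.

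\textbf{Injectivity via flooding.} It suffices to show that $d([f])=0$ forces $[f]=[c_{-\be_1}]$. Given a representative $f\colon (I_{m,n},\partial I_{m,n})\to (S^2,-\be_1)$, the first step is to \emph{flood} with the basepoint: by Lemma~\ref{lem: spider move}, any point $p$ whose label is not $\be_1$ and none of whose neighbours is labelled $\be_1$ may be re-set to $-\be_1$, and since such a move never creates a new $\be_1$, these moves can be iterated until stable, producing $f'\simeq f$ that equals $-\be_1$ outside the distance-$1$ neighbourhoods of the points originally labelled $\be_1$. A further extension homotopy (Remark~\ref{shiftrem}, via Corollary~\ref{shiftcor} and Lemma~\ref{localized homotopy}) slides these neighbourhoods apart into a disjoint family of ``island'' subrectangles $R_1,\dots,R_\ell$, with $f'(\partial R_i)=\{-\be_1\}$ and the interior labels of each $R_i$ drawn from $S^2\setminus\{-\be_1\}$. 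The combinatorial core of Section~\ref{sec: flooding} is then to classify each island: reading its labels as a closed walk in the rim $4$-cycle yields an integer winding number $k_i$, and one shows, by further spider moves and subdivisions and using Lemma~\ref{relativehomotopy}, that the island is extension-homotopic to $[T]^{k_i}$ --- in particular to $[c_{-\be_1}]$ when $k_i=0$. Hence $[f]=[f']=[T]^{k_1}\cdots[T]^{k_\ell}$, and applying $d$ together with its additivity over the $\cdot$-operation gives $k_1+\cdots+k_\ell=d([f])=0$. Therefore $[f]=[T]^{0}=[c_{-\be_1}]$; equivalently, one repeatedly picks islands of opposite-sign winding, slides them together by Remark~\ref{shiftrem}, and uses $[T\cdot T^{-1}]=[T]\cdot[T]^{-1}=[c_{-\be_1}]$ from the proof of Theorem~\ref{thm: group} to erase them. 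Thus $\ker d=0$, and combined with surjectivity, $d$ is an isomorphism.

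\textbf{Expected main obstacle.} The delicate step is the flooding normal form itself: verifying that the iterated spider-move flooding genuinely terminates with every non-basepoint label confined to separable islands, that each island --- whose labels cluster around $\be_1$-points and lie in the contractible wheel $S^2\setminus\{-\be_1\}$ --- is indeed extension-homotopic to $[T]^{k_i}$, and that all triangle orientations are tracked correctly through the moves, including the cases where the neighbourhoods of distinct $\be_1$-points are initially adjacent or nested. Once that lemma is in place, everything above is formal bookkeeping.
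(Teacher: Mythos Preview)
Your surjectivity argument matches the paper exactly.

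For injectivity, your outline has the right shape but misses the key technical device the paper actually uses, and as written contains a genuine gap. Your flooding step floods only with $-\be_1$, which can never change a point labeled $\be_1$ (since $-\be_1\not\sim\be_1$), nor can it separate two adjacent $\be_1$-points. So after your flood the preimage $f^{-1}(\be_1)$ is exactly what it was before; if it was a connected $10\times 10$ block, you still have one large island, and your appeal to Remark~\ref{shiftrem} to ``slide neighbourhoods apart'' is illegitimate because that remark only moves subrectangles already surrounded by basepoints. You then need the assertion that an arbitrary island (possibly with many $\be_1$-labels) is extension-homotopic to $[T]^{k_i}$; you flag this as the ``expected main obstacle,'' but it is in fact the entire content of the injectivity proof, and your sketch gives no mechanism for it.

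The paper sidesteps this by a different normalization (Lemma~\ref{islandlem}): first replace $f$ by the $k$-fold subdivision $f\circ\rho_k$ with $k\ge 5$ (Theorem~\ref{subdivhomotopy}), make two local ``adjustments'' near places where distinct $k\times k$ blocks of $\be_1$'s meet, and then flood successively by $\be_2$, then $\be_3$, then $-\be_1$. The first two floods wash away the interiors of the $\be_1$-blocks, leaving $\be_1$ only at isolated corner points, far apart from each other. Thus every island contains \emph{exactly one} $\be_1$-point, so Theorem~\ref{islandforms} classifies it completely as $T$, $T^{-1}$, or constant---no general ``winding number $k_i$'' statement is needed. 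The cancellation of $T$'s against $T^{-1}$'s then finishes as you describe.
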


The proof  of Theorem~\ref{thm: d iso} occupies the remainder of the paper. We will prove surjectivity immediately; the proof of injectivity requires some preparation.  

First, we introduce a specific map that will end up playing a prominent role in our calculation as a generator of  $\pi_2(S^2,-\be_1)$.

\begin{defn}\label{def: T}
Let $T: (I_{4,4}, \partial I_{4,4}) \to (S^2, -\be_1)$ be the map given by the labeling of points of $I_{4, 4}$ with values from $S^2$ as in Figure~\ref{fig: Defn ot T}.  In this diagram, we have used the style of earlier sections and indicated a label of the basepoint $-\be_1$ with a dot.  Furthermore, we have indicated labels of the three standard basis vectors with their subscript, and labels of $-\be_2$, respectively $-\be_3$, by $-2$, respectively $-3$.  We will adopt this style of diagram going forward.
\begin{figure}
\[
T: \drawisland{2}{3}{-2}{-3}
\]
\caption{The map $T\colon (I_{4,4},\partial I_{4,4}) \to (S^2, -\be_1)$ }\label{fig: Defn ot T}
\end{figure}
\end{defn}

\begin{lem}[\emph{Surjective part of Theorem~\ref{thm: d iso}}]\label{lem: d surjective}
The map $T: (I_{4,4}, \partial I_{4,4}) \to (S^2, -\be_1)$ given in Definition~\ref{def: T} satisfies $d(T)=1$.  Hence, the triangle-counting function $d: \pi_2(S^2,-\be_1) \to \Z$ is surjective.
\end{lem}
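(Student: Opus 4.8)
The plan is to unpack the definition of $T$ from Figure~\ref{fig: Defn ot T}, locate the one triangle in the triangulation that contributes to $d(T)$, pin down its orientation, and then deduce surjectivity formally. First I would read off from the figure the labeling of the interior $3\times 3$ block of $I_{4,4}$ (the points with both coordinates in $\{1,2,3\}$), namely the bottom row $\be_3, \be_3, -\be_2$, the middle row $\be_2, \be_1, -\be_2$, and the top row $\be_2, -\be_3, -\be_3$, with the entire boundary $\partial I_{4,4}$ labeled $-\be_1$. The key observation is that the center point $(2,2)$ is the \emph{only} point of $I_{4,4}$ carrying the label $\be_1$. Since every triangle of the chosen triangulation has three distinct vertices, any triangle labeled $\langle \be_1,\be_2,\be_3\rangle$ must have $(2,2)$ among its vertices, so it suffices to examine the six triangles of the hexagon (in the sense of Figure~\ref{fig: hexagon}) surrounding $(2,2)$.

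Next I would run through those six triangles. With the positively sloped diagonals as edges, the six triangles at $(2,2)$ have third-vertex labels drawn from $f(a\pm1,b), f(a\pm1,b\pm1), f(a,b\pm1)$, and a direct check shows that five of them fail to display all of $\be_1,\be_2,\be_3$ — each such triangle either involves one of $-\be_2,-\be_3$ or repeats $\be_3$ — while exactly one, the triangle on the vertices $(1,1),(1,2),(2,2)$ with labels $\be_3,\be_2,\be_1$ respectively, carries all three standard basis vectors. I would then fix the orientation by walking counterclockwise around this triangle's boundary: this visits the vertices in the order $(2,2),(1,2),(1,1)$, i.e.\ the labels in the cyclic order $\be_1,\be_2,\be_3$. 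Hence this triangle counts $+1$, no other triangle contributes, and therefore $d(T)=1$.

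Finally, for surjectivity I would invoke Proposition~\ref{prop: induced d a hom}: the induced triangle-counting function $d\colon \pi_2(S^2,-\be_1)\to\Z$ is a group homomorphism, so $d\big([T]^{\,n}\big)=n\,d([T])=n$ for every $n\in\Z$, and thus $d$ is onto.

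I do not expect any real obstacle here, since the argument is a finite verification: the load-bearing step is simply the reduction to the hexagon via the uniqueness of the $\be_1$-label. The only things requiring care are transcribing the labeling correctly from the figure macro and fixing the orientation convention consistently, so that the surviving triangle genuinely counts $+1$ (a sign error would give $d(T)=-1$, which still yields surjectivity but would clash with the later use of $[T]$ as a preferred generator).
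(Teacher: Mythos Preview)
Your proposal is correct and follows essentially the same approach as the paper's own proof: identify the unique $\langle \be_1,\be_2,\be_3\rangle$ triangle in the triangulated $I_{4,4}$ (the paper simply points to Figure~\ref{fig: Degree 1 map} rather than spelling out the hexagon reduction), verify its orientation is positive, and then invoke the homomorphism property from Proposition~\ref{prop: induced d a hom} for surjectivity. Your version is more explicit about why no other triangle contributes, but the underlying argument is identical.
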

\begin{proof}
Figure~\ref{fig: Degree 1 map} shows the map $T$ with the triangulation we use to define the value of $d$.  In the figure, we see exactly one $\langle \be_1,\be_2,\be_3\rangle$ triangle, oriented in the positive direction. Thus we have $d([T])=1$. The consequence for surjectivity of $d$ follows immediately, since we already have shown that $d$ is a homomorphism.  Note that, \emph{per} Proposition~\ref{prop: induced d a hom}, we have 
$d([T^{-1}])=-1$ and $d\left( [c_{-\be_1}]\right) = 0$.  
\end{proof}

\begin{figure}
\centering
$$
\begin{tikzpicture}[scale=1.0,every node/.style={scale=.8}]

\foreach \i in {0, 1, 2, 3, 4}{
  \foreach \j in {0, 1, 2, 3, 4}{
    \node[inner sep=1.75pt, circle, fill=black] at (\i,\j) [draw] {}; } }

\foreach \i in {0, 1, 2, 3, 4}{
 \draw (\i, 0) -- (\i, 4);
 \draw (0, \i) -- (4, \i); }

\foreach \i in {0, 1, 2, 3}{
 \draw (0, \i) -- (4-\i, 4); }

\foreach \i in {1, 2, 3}{
 \draw (\i, 0) -- (4, 4-\i); }

\foreach \i in {0, 1, 2, 3, 4} {
\node[below]  at (\i,0) {$-\mathbf{e}_1$};
\node[above]  at (\i,4) {$-\mathbf{e}_1$};
}
\foreach \j in {1, 2, 3} {
\node[left]  at (0,\j) {$-\mathbf{e}_1$};
\node[right]  at (4,\j) {$-\mathbf{e}_1$};
}
\node[above left]  at (1,1) {$\be_3$};
\node[above left]  at (2,1) {$\be_3$};
\node[above left]  at (3,1) {$-\be_2$};
\node[above left]  at (1,2) {$\be_2$};
\node[above left]  at (2,2) {$\mathbf{e}_1$};
\node[above left]  at (3,2) {$-\be_2$};
\node[above left]  at (1,3) {$\be_2$};
\node[above left]  at (2,3) {$-\be_3$};
\node[above left]  at (3,3) {$-\mathbf{e}_3$};

\node[scale=1.5]  at ((1.3, 1.7) {$+1$};

\end{tikzpicture}
$$
\caption{The map $T\colon (I_{4,4},\partial I_{4,4}) \to (S^2, -\be_1)$ has $d(T) = 1$. }\label{fig: Degree 1 map}
\end{figure}

We now break off from the proof of Theorem~\ref{thm: d iso} to prepare for showing injectivity of the induced triangle-counting function. The proof
of Theorem~\ref{thm: d iso} is completed at the end of the next section.

\section{Islands, Flooding, and Injectivity of $d$}\label{sec: flooding}

We show several Lemmas that will be used in showing injectivity of $d$.  These Lemmas are summarized in Theorem~\ref{islandforms} below.
The first lemma is a simple application of spider moves.
\begin{lem}\label{2blocksisland}
Let $f:(I_{4,4},\partial I_{4,4}) \to (X,-\be_1)$ be continuous with $f(x)=\be_1$ for exactly one $x\in I_{4,4}$. Then $f$ is homotopic to a map of the form:
\[
\drawisland{x}{y}{z}{w}
\]
for $x,y,z,w \in \{\pm \be_2, \pm \be_3\}$.

Furthermore, if $\{x,y,z,w\} \neq \{ \pm \be_2, \pm \be_3\}$, then $f$ is homotopic to the constant map with constant value $-\be_{1}$.
\end{lem}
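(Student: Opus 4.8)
The plan is to first pin down where $f$ takes the value $\be_1$, then to realise the island form by a short sequence of spider moves on the corners of the central $3\times3$ block, and finally — in the non-surjective case — to contract to the constant map by one further spider move followed by a one-step homotopy.

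I would begin by noting that the unique point $x$ with $f(x)=\be_1$ must be the center $(2,2)$ of $I_{4,4}$. Indeed $x\notin\partial I_{4,4}$, since the boundary carries the label $-\be_1$ and $\be_1\not\sim-\be_1$; moreover $x$ cannot even be adjacent to a boundary point, as that would force $\be_1=f(x)\sim-\be_1$. The only point of $I_{4,4}$ that is interior and not adjacent to the boundary is $(2,2)$. Consequently each of the eight points of the surrounding ring is adjacent to the center, so its label is adjacent to $\be_1$, is not $\be_1$ (by uniqueness), and is not $-\be_1$; hence all eight ring labels lie in $\{\pm\be_2,\pm\be_3\}$.

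To produce the island form I would apply Lemma~\ref{lem: spider move} four times, each time recolouring one corner of the $3\times3$ block so that it agrees with an adjacent edge-midpoint: set the top-left corner equal to the left-midpoint value, the top-right corner equal to the top-midpoint value, the bottom-right corner equal to the right-midpoint value, and the bottom-left corner equal to the bottom-midpoint value. Each such move is legal: the new corner label lies in $\{\pm\be_2,\pm\be_3\}$, hence is adjacent both to the center label $\be_1$ and to the label $-\be_1$ at the neighbouring boundary points, while it is adjacent to the old corner label and to the value at the other adjacent edge-midpoint by continuity of $f$, using that the relevant cells are $8$-adjacent in $I_{4,4}$ (in particular, diagonally adjacent cells receive adjacent labels). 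The four moves do not interfere, since no two corners are adjacent and neither the center nor any midpoint is altered. Writing $x,w,z,y$ for the (unchanged) labels at the left-, top-, right-, and bottom-midpoints respectively, the resulting map is exactly the island form $\drawisland{x}{y}{z}{w}$ of the statement, which settles the first assertion; its continuity is automatic, since it arises from $f$ by legal spider moves.

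For the final assertion, suppose $\{x,y,z,w\}\neq\{\pm\be_2,\pm\be_3\}$. As $x,y,z,w$ already lie in $\{\pm\be_2,\pm\be_3\}$, this set must be a proper subset, so I can choose $v\in\{\pm\be_2,\pm\be_3\}\setminus\{x,y,z,w\}$. Then $-v$ is adjacent in $S^2$ to $\be_1$, to $-\be_1$, and to each of $x,y,z,w$ (none of which equals $v$), so Lemma~\ref{lem: spider move} supplies a spider move recolouring the center from $\be_1$ to $-v$; write $f'$ for the result. Now $f'$ attains no value equal to $\be_1$, so $f'(p)\sim-\be_1$ at every point $p$, and in particular whenever $p\sim p'$. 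By Lemma~\ref{onesteplem} this yields a one-step homotopy from $f'$ to the constant map $c_{-\be_1}$, and it is relative to the boundary since every map involved sends $\partial I_{4,4}$ to $-\be_1$. Chaining the homotopies constructed above — from $f$ to the island form, thence to $f'$, thence to $c_{-\be_1}$ — completes the proof. I do not anticipate a serious obstacle; the one point demanding care is the verification of the adjacency hypotheses of Lemma~\ref{lem: spider move} for the four corner recolourings, which relies on the precise $8$-adjacency pattern of the central $3\times3$ block inside $I_{4,4}$.
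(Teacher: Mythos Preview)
Your proof is correct and follows essentially the same route as the paper's: locate the unique $\be_1$-label at the center, then use four spider moves to overwrite the corner labels of the $3\times3$ block with the adjacent midpoint labels, yielding the island form. Your contraction in the second part is a mild streamlining of the paper's version---the paper spider-moves all nine interior labels to $-v$ and then to $-\be_1$, whereas you recolour only the center and then invoke Lemma~\ref{onesteplem} directly---but the idea is the same.
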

\begin{proof}
Because $f$ maps the boundary of $I_{4,4}$ to $-\be_1$, the only point which can map to $\be_1$ is the center point $(2,2)\in I_{4,4}$. Thus our map $f$ must take the following form:
\[
f =
\drawislandeight{x_1}{x_4}{x_6}{x_7}{x_8}{x_5}{x_3}{x_2}
\]
for some $x_1,\dots,x_8 \in \{\pm \be_2,\pm \be_3\}$. We may do several spider move homotopies as in Lemma \ref{spidermove}. For example the value of $f(1,3) = x_1$ may be changed to $x_4$ because all neighbors of $(1,3)$ have labels which are already adjacent to $x_4$. Performing 4 similar spider moves results in the following map homotopic to $f$:
\[
f \simeq
\drawisland{x_4}{x_7}{x_5}{x_2}
\]
which has the desired format, proving the first statement of the Lemma.

For the second statement, assume that there is some $v \in \{\pm \be_2,\pm \be_3\}$ which is different from all values of $f$. Then all interior values of $f$ can be changed by spider moves to $-v$, and then to $-\be_1$:
\[
f \simeq
\vcenter{\hbox{
\begin{tikzpicture}[scale=.5,every node/.style={scale=.7}]
\draw[thick, shift={(-.5,-.5)}] (0,0) rectangle (5,5);
\draw[densely dotted, shift={(-.5,-.5)}] (0,0) grid (5,5);
\foreach \x in {1,2,3} {
 \foreach \y in {1,2,3} {
  \node at (\x,\y) {-$v$};
 }
 }
\foreach \x in {0,...,4} {
 \node at (\x,0) {.};
 \node at (\x,4) {.};
 \node at (0,\x) {.};
 \node at (4,\x) {.};
}
\end{tikzpicture}
}}
\simeq
\vcenter{\hbox{
\begin{tikzpicture}[scale=.5,every node/.style={scale=.7}]
\draw[thick, shift={(-.5,-.5)}] (0,0) rectangle (5,5);
\draw[densely dotted, shift={(-.5,-.5)}] (0,0) grid (5,5);
\foreach \x in {0,...,4} {
 \foreach \y in {0,...,4} {
  \node at (\x,\y) {$.$};
 }
 }
\end{tikzpicture}
}}
\]
\end{proof}

Now the map $T$ introduced in Definition~\ref{def: T} has $T(x)=\be_1$ at exactly one point $x$ and satisfies $d(T) = 1$. The next lemma shows that these properties effectively characterize  $T$ up to extension homotopy.
\begin{lem}\label{islandTlem}
Let $f:(I_{4,4},\partial I_{4,4}) \to (X,-\be_1)$ be continuous with $f(x)=\be_1$ for exactly one $x\in I_{4,4}$ and $d(f)=1$. Then $f\approx T$.
\end{lem}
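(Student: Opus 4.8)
The plan is to reduce $f$ to a normal form using Lemma~\ref{2blocksisland}, enumerate the few possibilities, use the value of $d$ to single out the relevant ones, and then connect each to $T$ by a rotation argument.

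First I would apply Lemma~\ref{2blocksisland}: since $f(x)=\be_1$ at exactly one point, $f$ is homotopic to a map $g$ of the island form appearing in that lemma, with corner values $x,y,z,w\in\{\pm\be_2,\pm\be_3\}$. If these four values were not all distinct, Lemma~\ref{2blocksisland} would give $g\simeq c_{-\be_1}$, so that $d(f)=d(g)=0$ since $d$ is a homotopy invariant, contradicting $d(f)=1$. Hence $x,y,z,w$ is a permutation of $\be_2,-\be_2,\be_3,-\be_3$, and in particular $f\approx g$.

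Next I would enumerate the maps of this form that are actually continuous. Writing out the adjacency constraints among the nine interior points of $I_{4,4}$, one finds that the four blocks of $g$---in the West, South, East and North positions around the centre---must be arranged so that antipodal values $\be_i,-\be_i$ occupy opposite blocks; equivalently, reading the blocks cyclically around the centre, $\be_2$ lies opposite $-\be_2$ and $\be_3$ opposite $-\be_3$. There are exactly eight such maps: the four maps obtained from $T$ by a rotation of $I_{4,4}$, and the four obtained from $T^{-1}$ by a rotation. To decide which of these eight $g$ is, I would compute $d(g)$ directly. The only pixel mapped to $\be_1$ is the centre, so any triangle labelled $\langle\be_1,\be_2,\be_3\rangle$ is incident to it; of the six triangles of our triangulation incident to the centre, two carry a repeated label, and among the other four exactly one is labelled $\langle\be_1,\be_2,\be_3\rangle$ (since among the cyclically arranged values $\{\pm\be_2,\pm\be_3\}$ exactly one pair of adjacent blocks carries $\{\be_2,\be_3\}$). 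Thus $d(g)=\pm1$, the sign being dictated by the cyclic orientation of the blocks, and a short check shows $d=+1$ on the rotations of $T$ and $d=-1$ on the rotations of $T^{-1}$ (it is enough to verify this for $T$ and for one $90^\circ$ rotation, since a $180^\circ$ rotation of $I_{4,4}$ preserves both orientation and our chosen triangulation). Since $d(g)=1$, the map $g$ is one of the four rotations of $T$.

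Finally, and this is the crux, I would show that each of the four rotations of $T$ is extension-homotopic to $T$; by transitivity it suffices to prove that a $90^\circ$ rotation of any map of island form is extension-homotopic to that map. This cannot be achieved by spider moves on $I_{4,4}$ itself, because the four values on the outer ring of interior pixels obstruct one another cyclically and no legal spider move is available. The remedy is to first pass to a trivial extension on a larger rectangle, say $I_{6,6}$, and then carry out an explicit sequence of spider moves that swings the island through a right angle, using the extra room to break the cyclic obstruction. Producing this sequence is the main piece of work in the proof; once it is in hand, the chain $f\approx g\approx T$ gives the result.
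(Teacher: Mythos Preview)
Your outline matches the paper's proof closely: reduce via Lemma~\ref{2blocksisland}, use $d(f)=1$ to force $\{x,y,z,w\}=\{\pm\be_2,\pm\be_3\}$, observe that the cyclic adjacency constraint leaves only the four cyclic permutations of $(\be_2,\be_3,-\be_2,-\be_3)$ (the other four permutations giving $d=-1$), and then show these four are mutually extension-homotopic by a ``rotation'' argument.

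The one point where you diverge from the paper is the mechanism for the rotation. You propose passing to a trivial extension (say on $I_{6,6}$) and then finding a spider-move sequence in the extra room on the \emph{outside}. The paper instead performs a single \emph{interior} row-doubling---replacing the island on $I_{4,4}$ by a ``tall island'' on $I_{4,5}$ with a $2\times 1$ block of $\be_1$'s at the centre---and then exhibits an explicit six-step sequence of spider moves that cycles $(x,y,z,w)\mapsto(w,x,y,z)$, followed by collapsing the doubled row. The second central $\be_1$ is exactly what decouples the top and bottom halves of the ring and breaks the cyclic obstruction you correctly identified; with only one central $\be_1$ the four ``edge'' pixels of the ring remain pinned regardless of how much $-\be_1$ padding you add outside. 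Your plan is still salvageable, since by Theorem~\ref{columndoublehomotopy} the trivial extension and the interior row-doubling are themselves homotopic, so after trivially extending you could first migrate to the tall-island configuration and then run the paper's moves. But the direct idea of ``swinging the island through a right angle using outside room'' is not the move that actually works; the leverage comes from doubling the centre, not from enlarging the boundary.
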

\begin{proof}
By Lemma \ref{2blocksisland} we may assume that $f$ takes the form:
\[
\drawisland{x}{y}{z}{w}
\]
where the values $x,y,z,w$ are taken from the set of possible values $\{\pm \be_2,\pm \be_3\}$. Since we assume that $d(f)=1$, this means that $f$ is not homotopic to a constant map, and so $\{x,y,z,w\} = \{\pm \be_2,\pm \be_3\}$ by the second part of Lemma \ref{2blocksisland}.

Since $(x,y,z,w)$ are all distinct, and we must have $x \sim y \sim z \sim w \sim x$, these values in order must be some cyclic permutation of $(\be_2,\be_3,-\be_2,-\be_3)$ or of $(\be_2, -\be_3,-\be_2,\be_3)$. In fact, only cyclic permutations of the first type will result in $d(f)=1$. The others will result in $d(f)=-1$, and so we do not consider them. Thus $f$ is homotopic to one of the following four maps:
\begin{equation}\label{fourmaps}
\begin{split}
\drawisland{2}{3}{-2}{-3}
\drawisland{-3}{2}{3}{-2}
\drawisland{-2}{-3}{2}{3}
\drawisland{3}{-2}{-3}{2}
\end{split}
\end{equation}
The first of these maps is $T$, so it will suffice to show that any of these maps is extension-homotopic to the others.

We do this by demonstrating a ``rotation of values'' by extension homotopies. Our demonstration is pictorial. In the sequence of diagrams below, each step is either a pair of consecutive spider moves or a row doubling:

\begin{align*}
\drawisland{x}{y}{z}{w}
&\approx
\drawtallisland{x}{x}{x}{y}{y}{z}{z}{z}{w}{w}
\simeq
\drawtallisland{w}{x}{x}{y}{y}{y}{z}{z}{w}{w}
\simeq
\drawtallisland{w}{x}{x}{x}{y}{y}{z}{z}{z}{w} \\
&\simeq
\drawtallisland{w}{w}{x}{x}{y}{y}{y}{z}{z}{w}
\simeq
\drawtallisland{w}{w}{x}{x}{x}{y}{y}{z}{z}{z} \\
&\simeq
\drawtallisland{w}{w}{w}{x}{x}{y}{y}{y}{z}{z} 
\approx
\drawisland{w}{x}{y}{z}
\end{align*}

By repeatedly applying this rotation, we see that all four maps of \eqref{fourmaps} are extension-homotopic.
Since $f$ is homotopic to one of these four maps, and the first one is $T$, we have $f\approx T$.
\end{proof}

Let  $T^{-1} \colon (I_{4,4},\partial I_{4,4}) \to (X,-\be_1)$  be the inverse of $T$, as we defined inverses in the proof of Theorem~\ref{thm: group}. In diagrammatic terms, this is given as follows:
$$T^{-1} =
\drawislandeight{-3}{-2}{-2}{3}{3}{2}{2}{-3}$$
The following result corresponds to Lemma~\ref{islandTlem}  for maps with $d(f)=-1$ proved using the same arguments. We omit the details of its proof.
\begin{lem}
Let $f:(I_{4,4},\partial I_{4,4}) \to (X,-\be_1)$ be continuous with $f(x)=\be_1$ for exactly one $x\in I_{4,4}$ and $d(f)=-1$. Then $f\approx T^{-1}$. \qed
\end{lem}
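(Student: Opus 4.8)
The plan is to run the argument of Lemma~\ref{islandTlem} with all degrees negated. First I would invoke Lemma~\ref{2blocksisland}: since $f$ takes the value $\be_1$ at exactly one point of $I_{4,4}$, the map $f$ is homotopic (hence extension-homotopic) to a map whose only non-basepoint interior labels are four values $x,y,z,w\in\{\pm\be_2,\pm\be_3\}$ filling the four quadrants around the central $\be_1$, in the shape displayed in Lemma~\ref{2blocksisland}. Because $d$ is an extension-homotopy invariant and $d(f)=-1\neq 0$, the map $f$ is not null-homotopic, so the second clause of Lemma~\ref{2blocksisland} forces $\{x,y,z,w\}=\{\pm\be_2,\pm\be_3\}$; each of the four values occurs exactly once.

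The combinatorial core is then identical to the corresponding step of Lemma~\ref{islandTlem}. Reading the quadrant labels cyclically around the centre, consecutive labels must be adjacent in $S^2$, and since antipodal points of $S^2$ are non-adjacent, $\be_2$ and $-\be_2$ cannot be consecutive, and neither can $\be_3$ and $-\be_3$; hence the cyclic list alternates between $\{\pm\be_2\}$ and $\{\pm\be_3\}$. Up to which corner we start at, $f$ is therefore homotopic either to a cyclic representative of the ``type'' $(\be_2,\be_3,-\be_2,-\be_3)$ or to one of the type $(\be_2,-\be_3,-\be_2,\be_3)$. A direct count of the oriented $\langle\be_1,\be_2,\be_3\rangle$ triangles in the six-triangle hexagon around the centre (using the triangulation with positively sloped diagonals only) shows that the first type contributes $d=+1$ --- this is precisely the computation behind $d(T)=1$ in Lemma~\ref{lem: d surjective} --- while the second type contributes $d=-1$. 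Since $d(f)=-1$, this pins $f$ down, up to homotopy, to one of the four cyclic representatives of $(\be_2,-\be_3,-\be_2,\be_3)$.

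To finish I would show these four maps form a single extension-homotopy class and identify it. The ``rotation of values'' construction in the proof of Lemma~\ref{islandTlem} --- a row-doubling (an extension homotopy, by Theorem~\ref{columndoublehomotopy}), a sequence of spider moves, and a reverse row-doubling --- uses only the $4$-cycle adjacency pattern of the labels $x,y,z,w$ (consecutive labels adjacent, opposite labels antipodal), which is shared by both types; so it applies verbatim and shows that all four maps of type $(\be_2,-\be_3,-\be_2,\be_3)$ are mutually extension-homotopic. Finally, $T^{-1}$ is itself a continuous map $(I_{4,4},\partial I_{4,4})\to (X,-\be_1)$ taking $\be_1$ at exactly one point, with $d(T^{-1})=-d(T)=-1$ by Proposition~\ref{prop: induced d a hom}; running the reduction above on $T^{-1}$ places it in this same family. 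Hence $f\approx T^{-1}$, as claimed.

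The only step carrying genuine content is the sign computation in the second paragraph: checking that the two cyclic types have opposite degree. This is a finite verification over the six triangles of the hexagon, but one must be careful both with the orientation sign convention and with the fact that only positively sloped diagonals enter the triangulation. One can also sidestep it almost entirely: reduce $T^{-1}$ explicitly (via the spider moves of Lemma~\ref{2blocksisland}) to a particular map of type $(\be_2,-\be_3,-\be_2,\be_3)$ and note $d(T^{-1})=-1$ from Proposition~\ref{prop: induced d a hom}; since $d$ is constant on the extension-homotopy class of that type, every such map has $d=-1$, and the rest is the same bookkeeping with spider moves and row/column doublings already used throughout this section.
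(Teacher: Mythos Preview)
Your proposal is correct and takes essentially the same approach as the paper: the paper explicitly omits the proof, stating only that it ``corresponds to Lemma~\ref{islandTlem} for maps with $d(f)=-1$ proved using the same arguments,'' and your write-up faithfully supplies those details --- the reduction via Lemma~\ref{2blocksisland}, the cyclic-type dichotomy, the identification of the $d=-1$ type, and the rotation-of-values extension homotopy. Your closing remark about placing $T^{-1}$ itself into the family (either by direct reduction or via $d(T^{-1})=-1$) is a sensible way to tie off the argument and is implicit in the paper's treatment.
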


We also consider such maps having $d(f)=0$.
\begin{lem}
Let $f:(I_{4,4},\partial I_{4,4}) \to (X,-\be_1)$ be continuous with $f(x)=\be_1$ for exactly one $x\in I_{4,4}$ and $d(f)=0$. Then $f$ is homotopic to a constant map.
\end{lem}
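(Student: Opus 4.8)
The plan is to push $f$ into the island normal form supplied by Lemma~\ref{2blocksisland} and to observe that the only case not already handled by that lemma is incompatible with the hypothesis $d(f)=0$.

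First I would apply Lemma~\ref{2blocksisland}: since $f$ takes the value $\be_1$ at exactly one point of $I_{4,4}$, it is homotopic to a map of the island form of that lemma, with the four interior labels $x,y,z,w$ drawn from $\{\pm\be_2,\pm\be_3\}$; and if moreover $\{x,y,z,w\}\neq\{\pm\be_2,\pm\be_3\}$, then the ``furthermore'' clause of Lemma~\ref{2blocksisland} already gives that $f$ is homotopic to the constant map $c_{-\be_1}$. Hence the entire content of the statement is to rule out the remaining case $\{x,y,z,w\}=\{\pm\be_2,\pm\be_3\}$ under the assumption $d(f)=0$.

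In that case the four values are distinct and satisfy the cycle of adjacencies $x\sim y\sim z\sim w\sim x$ in $S^2$, so, exactly as in the proof of Lemma~\ref{islandTlem}, the tuple $(x,y,z,w)$ is a cyclic permutation of $(\be_2,\be_3,-\be_2,-\be_3)$ or of $(\be_2,-\be_3,-\be_2,\be_3)$. By the ``rotation of values'' argument of Lemma~\ref{islandTlem}, every map in the first family is extension-homotopic to $T$, and by the analogous lemma for $d=-1$ (stated just after the diagram for $T^{-1}$) every map in the second family is extension-homotopic to $T^{-1}$. Since $d(T)=1$ and $d(T^{-1})=-1$ by Lemma~\ref{lem: d surjective} and Proposition~\ref{prop: induced d a hom}, and $d$ is an extension-homotopy invariant, we would obtain $d(f)=\pm 1$, contradicting $d(f)=0$. (If one prefers an unmediated computation: the only pixel of $I_{4,4}$ labelled $\be_1$ is the center $(2,2)$, so every $\langle\be_1,\be_2,\be_3\rangle$ triangle of the chosen triangulation is incident to it; of the six triangles incident to $(2,2)$, two carry a repeated label, and the other four correspond bijectively to the four edges of the cyclic word $x\sim y\sim z\sim w\sim x$, exactly one of which is labelled $\{\be_2,\be_3\}$ when $\{x,y,z,w\}=\{\pm\be_2,\pm\be_3\}$, so precisely one such triangle is labelled $\langle\be_1,\be_2,\be_3\rangle$ and it contributes $\pm 1$.) Therefore $\{x,y,z,w\}\neq\{\pm\be_2,\pm\be_3\}$, and $f$ is homotopic to a constant map.

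I do not expect a genuine obstacle: the lemma is a direct corollary of Lemma~\ref{2blocksisland} together with the degree facts already established. The one point deserving a sentence of care is the verification that an island map whose four interior labels exhaust $\{\pm\be_2,\pm\be_3\}$ has $d=\pm 1$ (never $0$) --- which is precisely the triangle count, or the identification with $T^{\pm 1}$, indicated above.
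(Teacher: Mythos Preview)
Your argument is correct and follows essentially the same route as the paper: reduce via Lemma~\ref{2blocksisland}, dispose of the case $\{x,y,z,w\}\neq\{\pm\be_2,\pm\be_3\}$ by the ``furthermore'' clause, and derive a contradiction in the remaining case by identifying the island with $T$ or $T^{-1}$ (the paper normalizes via rotation to $x=\be_2$ and enumerates the two possibilities directly, while you invoke the cyclic-permutation classification and the $d=\pm1$ lemmas). Your parenthetical direct triangle count is a nice self-contained alternative that avoids appealing to the unproved $d=-1$ lemma.
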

\begin{proof}
By Lemma \ref{2blocksisland} we may assume that $f$ takes the form:
\[
\drawisland{x}{y}{z}{w}
\]
where the values $x,y,z,w$ are taken from this set of 4 possible values: $\{\pm \be_2,\pm \be_3\}$. By the second part of Lemma \ref{2blocksisland}, if $\{x,y,z,w\} \neq \{\pm \be_2,\pm \be_3\}$ then $f$ is homotopic to a constant map and we are done. So it remains only to consider the case when $\{x,y,z,w\} = \{\pm \be_2,\pm \be_3\}$, and we will show that this case leads to a contradiction.

The proof of Lemma \ref{islandTlem} shows how the values $x,y,z,w$ can be ``rotated'' by extension homotopy. Thus we may assume without loss of generality that $x=\be_2$. Since $x$ appears adjacent to both $y$ and $w$, neither $y$ or $w$ can be $-\be_2$, and so we must have $z = -\be_2$. Thus either $y =\be_3$ and $w=-\be_3$, or $y=-\be_3$ and $w=\be_3$.

If $y = \be_3$ and $w = -\be_3$ , then $f = T$ which contradicts our assumption that $d(f)=0$, since $d(T)=1$. In the other case we have $w=-\be_3$ and $y=\be_3$, and so $f$ is a rotation of $T^{-1}$, which again contradicts our assumption that $d(f)=0$.
\end{proof}

We summarize the three lemmas above as follows:
\begin{thm}\label{islandforms}
Let $f:(I_{4,4},\partial I_{4,4}) \to (X,-\be_1)$ be continuous with $f(x)=\be_1$ for exactly one $x\in I_{4,4}$, and let:
\[ T =
\drawisland{2}{3}{-2}{-3},
\qquad
T^{-1} =
\drawislandeight{-3}{-2}{-2}{3}{3}{2}{2}{-3}
\]
If $d(f)=1$, then $f\approx T$. If $d(f)=-1$, then $f\approx T^{-1}$. If $d(f)=0$, then $f$ is homotopic to a constant map.
\end{thm}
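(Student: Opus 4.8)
The plan is to obtain the statement by assembling the three lemmas that immediately precede it, since the theorem is precisely their common summary. The only point that needs a word of explanation is why the three hypotheses $d(f)=1$, $d(f)=-1$, $d(f)=0$ exhaust the possibilities for such a map; this will fall out of the analysis already carried out. So first I would note that, by Lemma~\ref{2blocksisland}, any $f$ as in the hypothesis is homotopic to a map in ``island'' form, with its unique $\be_1$-labelled point ringed by values $x,y,z,w \in \{\pm\be_2,\pm\be_3\}$ satisfying $x \sim y \sim z \sim w \sim x$ in $S^2$. If $\{x,y,z,w\} \neq \{\pm\be_2,\pm\be_3\}$, then the second part of Lemma~\ref{2blocksisland} gives $f \simeq c_{-\be_1}$, hence $d(f)=0$; otherwise the ordered ring traverses the $4$-cycle of $S^2$ on $\{\pm\be_2,\pm\be_3\}$ in one of its two directions, and, using the homotopy invariance of $d$, one checks that the island then contributes exactly one $\langle \be_1,\be_2,\be_3\rangle$ triangle, of sign $+1$ or $-1$ according to the direction. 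Thus $d(f)\in\{-1,0,1\}$ in every case.

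With this trichotomy in hand, the three clauses of the theorem are then exactly the conclusions of three results already established: Lemma~\ref{islandTlem} handles $d(f)=1$, giving $f\approx T$; the unnumbered lemma following it handles $d(f)=-1$, giving $f\approx T^{-1}$; and the lemma stated just before the theorem handles $d(f)=0$, giving that $f$ is homotopic to a constant map. I would simply invoke these case by case and conclude.

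I do not expect any obstacle, because all the substantive content has already been done in the cited lemmas --- in particular the ``rotation of island values by extension homotopy'' argument inside the proof of Lemma~\ref{islandTlem}, which collapses the four candidate degree-one islands of \eqref{fourmaps} into a single extension-homotopy class, together with its mirror version for $T^{-1}$. The present theorem is a bookkeeping consolidation, and its proof is correspondingly short; whatever was genuinely hard lives entirely in those earlier lemmas rather than here.
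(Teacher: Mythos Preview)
Your proposal is correct and matches the paper's treatment: the theorem is stated there explicitly as a summary of the three preceding lemmas, with no separate proof given. Your additional remark that necessarily $d(f)\in\{-1,0,1\}$ is not strictly required (the theorem is three independent conditionals, not a trichotomy), but it is correct and harmless.
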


Going forwards, we will refer to the $3 \times 3$ blocks of values with a value of $\be_1$ in the center as \emph{islands}.  We will eventually see that any map may be reduced, up to extension homotopy, to one whose values are represented by a number of these islands isolated from one another in a surrounding ``sea" of values of  $-\be_1$. 

Next, we introduce two more ingredients used to prove injectivity of $d$: a ``flooding" homotopy; and a pre-processing step that involves subdivision.

Given any map $f:(I_{m,n},\partial I_{m,n}) \to (S^2,-\be_1)$ and some particular value $b \in S^2$, we define a map $f_b:(I_{m,n},\partial I_{m,n}) \to (S^2,-\be_1)$ which we call \emph{the flood of $f$ with $b$} or \emph{the $b$-flood of $f$}, and which is a map that agrees in values with $f$ except that its values away from the boundary have been changed whenever possible to equal $b$. Specifically, we define:
\[ f_b(x) = \begin{cases}
b &\text{ if  $x\not\in \partial I_{m,n}$ and if $f(z) \neq -b$ for all $z\sim x$,} \\
f(x) &\text{ otherwise.}\end{cases} \]

Figure \ref{floodexamplefig} shows an example of a map $f$ with its flood by $-\be_{1}$.
\begin{figure}
\[
f: \vcenter{\hbox{
\begin{tikzpicture}[scale=.5,every node/.style={scale=.7}]
\foreach \x/\y in {2/2,6/2,6/3,6/4} {
\fill[shaded] (\x-.5,\y-.5) rectangle (\x+.5,\y+.5);
}
\foreach \x/\y/\l in {0/0/.,0/1/.,0/2/.,0/3/.,0/4/.,0/5/.,0/6/.,0/7/.,0/8/.,1/0/.,1/1/-2,1/2/-3,1/3/2,1/4/-3,1/5/-2,1/6/-2,1/7/-3,1/8/.,2/0/.,2/1/-2,2/2/1,2/3/2,2/4/.,2/5/-2,2/6/-2,2/7/-2,2/8/.,3/0/.,3/1/-2,3/2/3,3/3/3,3/4/3,3/5/3,3/6/3,3/7/.,3/8/.,4/0/.,4/1/-2,4/2/-2,4/3/3,4/4/.,4/5/3,4/6/2,4/7/2,4/8/.,5/0/.,5/1/-3,5/2/-2,5/3/3,5/4/2,5/5/2,5/6/2,5/7/-3,5/8/.,6/0/.,6/1/-3,6/2/1,6/3/1,6/4/1,6/5/-3,6/6/-3,6/7/.,6/8/.,7/0/.,7/1/-3,7/2/-3,7/3/-3,7/4/-3,7/5/-3,7/6/-3,7/7/-2,7/8/.,8/0/.,8/1/.,8/2/.,8/3/.,8/4/.,8/5/.,8/6/.,8/7/.,8/8/.} {
\node at (\x,\y) {\l};
}
\draw[densely dotted, shift={(-.5,-.5)}] (0,0) grid (9,9);
\draw[thick, shift={(-.5,-.5)}] (0,0) rectangle (9,9);
\end{tikzpicture}
}}
\qquad f_{-\be_{1}}: \vcenter{\hbox{
\begin{tikzpicture}[scale=.5,every node/.style={scale=.7}]
\foreach \x/\y in {2/2,6/2,6/3,6/4} {
\fill[shaded] (\x-.5,\y-.5) rectangle (\x+.5,\y+.5);
}
\foreach \x/\y/\l in {0/0/.,0/1/.,0/2/.,0/3/.,0/4/.,0/5/.,0/6/.,0/7/.,0/8/.,1/0/.,1/1/-2,1/2/-3,1/3/2,1/4/.,1/5/.,1/6/.,1/7/.,1/8/.,2/0/.,2/1/-2,2/2/1,2/3/2,2/4/.,2/5/.,2/6/.,2/7/.,2/8/.,3/0/.,3/1/-2,3/2/3,3/3/3,3/4/.,3/5/.,3/6/.,3/7/.,3/8/.,4/0/.,4/1/.,4/2/.,4/3/.,4/4/.,4/5/.,4/6/.,4/7/.,4/8/.,5/0/.,5/1/-3,5/2/-2,5/3/3,5/4/2,5/5/2,5/6/.,5/7/.,5/8/.,6/0/.,6/1/-3,6/2/1,6/3/1,6/4/1,6/5/-3,6/6/.,6/7/.,6/8/.,7/0/.,7/1/-3,7/2/-3,7/3/-3,7/4/-3,7/5/-3,7/6/.,7/7/.,7/8/.,8/0/.,8/1/.,8/2/.,8/3/.,8/4/.,8/5/.,8/6/.,8/7/.,8/8/.} {
\node at (\x,\y) {\l};
}
\draw[densely dotted, shift={(-.5,-.5)}] (0,0) grid (9,9);
\draw[thick, shift={(-.5,-.5)}] (0,0) rectangle (9,9);
\end{tikzpicture}
}}
\]
\caption{An example of a map $f:I_{8,8} \to S^2$ and the map $f_{-\be_{1}}$, the $-\be_{1}$-flood of $f$.\label{floodexamplefig}}
\end{figure}

\begin{lem}\label{floodhomotopic}
For $f:(I_{m,n},\partial I_{m,n}) \to (S^2,-\be_1)$ and $b\in S^2$, the flood of $f$ with $b$, $f_b:(I_{m,n},\partial I_{m,n}) \to (S^2,-\be_1)$ is continuous and homotopic to $f$ by a homotopy relative to the boundary.
\end{lem}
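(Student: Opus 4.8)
The plan is to realise $f_b$ directly as a \emph{one-step} homotopy away from $f$, via the criterion of Lemma~\ref{onesteplem}. First I would note that $f_b$ is a genuine map of pairs $(I_{m,n},\partial I_{m,n})\to (S^2,-\be_1)$: by definition, boundary points are never re-valued, so $f_b$ agrees with $f$ on $\partial I_{m,n}$ and in particular sends $\partial I_{m,n}$ to $-\be_1$.

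Next I would verify continuity of $f_b$ by a short case analysis on adjacent points $x\sim x'$ of $I_{m,n}$, according to which of $x,x'$ (if any) are \emph{flooded}, i.e.\ re-valued to $b$. If neither is flooded, then $f_b(x)=f(x)\sim f(x')=f_b(x')$ by continuity of $f$; if both are flooded, then $f_b(x)=b=f_b(x')$; and in the mixed case, say $x$ flooded and $x'$ not, we have $f_b(x)=b$ while $f_b(x')=f(x')$, and since $x'\sim x$ with $x$ flooded the defining condition forces $f(x')\neq -b$, hence $f(x')\sim b$ by the rule that $a\sim c$ in $S^2$ if and only if $a\neq -c$. An essentially identical computation shows that $f(x)\sim f_b(x')$ whenever $x\sim x'$: when $x'$ is not flooded this is just continuity of $f$, and when $x'$ is flooded the flooding condition at $x'$, applied to the neighbour $x$, gives $f(x)\neq -b$, hence $f(x)\sim b=f_b(x')$. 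This mixed case is the only place where anything beyond formal manipulation enters, since it relies on the specific adjacency structure of $S^2$; I would regard it as the ``main obstacle,'' albeit a mild one.

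With continuity of $f_b$ and the criterion ``$f(x)\sim f_b(x')$ whenever $x\sim x'$'' both in hand, Lemma~\ref{onesteplem} furnishes a one-step homotopy $H\colon I_{m,n}\times I_1\to S^2$ with $H(x,0)=f(x)$ and $H(x,1)=f_b(x)$. Finally I would observe that $H$ is a homotopy relative to the boundary: on $\partial I_{m,n}$ both $f$ and $f_b$ are constant at $-\be_1$, so $H(\partial I_{m,n}\times I_1)=\{-\be_1\}$, there being no intermediate time values in $I_1$ to check. This completes the argument.
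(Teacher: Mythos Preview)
Your proposal is correct and follows essentially the same route as the paper: verify continuity of $f_b$ by a short case analysis, then invoke the one-step criterion of Lemma~\ref{onesteplem} to conclude. Your case split (by which of $x,x'$ is flooded) is marginally cleaner than the paper's (which splits on whether $f(x)=-b$ or $f(y)=-b$), and you also explicitly check the boundary-relative condition, which the paper leaves implicit.
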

\begin{proof}
First we show that $f_b$ is continuous. Take $x\sim y\in I_{m,n}$, and we will show that $f_b(x)\sim f_b(y)$.
If $f(x)=-b$ or $f(y)=-b$, then by the definition of $f_b$ we will have $f_b(x)=f(x)$ and $f_b(y)=f(y)$ and so $f_b(x)\sim f_b(y)$ as desired. In the case where neither of $f(x)$ and $f(y)$ are $-b$, then $f_b(x) \in \{f(x),b\}$ and $f_b(y)\in \{f(y),b\}$. Since neither of $f(x)$ or $f(y)$ is $-b$, we have $f(x) \sim b$ and $f(y) \sim b$, and  $f_b(x)\sim f_b(y)$ as desired.

Now we show that $f_b$ is homotopic to $f$ in a single step. Take $x\sim y\in I_{m,n}$.  Since each of $f$ and $f_b$ are continuous, it is sufficient to show that $f_b(x)\sim f(y)$, by
Lemma~\ref{onesteplem}.
We investigate the various cases appearing in the definition of $f_b(x)$.

If $x \in \partial(I_{m,n})$ then $f_b(x) = -\be_1$, and also $f(x) = -\be_1$ and so $f_b(x) = f(x) \sim f(y)$ as desired.

If $f(z)\neq -b$ for all $z\sim x$, then in particular $f(y) \neq -b$, and $f_b(x) = b$. Since $f_b(x)=b$ and $f(y)\neq -b$, we have $f_b(x) \sim f(y)$ as desired.

The final case is when $x$ is outside the boundary and $f(z) = -b$ for some $z\sim x$. In this case $f_b(x) = f(x) \sim f(y)$ as in the first case.
\end{proof}

Next, we show how any $f\colon (I_{m,n},\partial I_{m,n}) \to (S^2,-\be_1)$ can be changed up to extension homotopy into a map in which the value $e_1$ occurs only at isolated points.

\begin{lem}\label{islandlem}
Let $f:(I_{m,n},\partial I_{m,n}) \to (S^2,-\be_1)$. Then there is a map $g \approx f$ having the property that no two adjacent points of the domain both map to $\be_1$. Furthermore, we may construct $g$ so that any two points $x,y$ in the domain of $g$ with $g(x)=g(y)=\be_1$ will be arbitrarily far apart.
\end{lem}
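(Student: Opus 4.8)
The plan is to pass to a heavily subdivided copy of $f$, which is constant on large blocks, and then use spider moves to erode the region of $\be_1$-values until it consists of isolated points that are pairwise far apart, the room for the separation being supplied by the subdivision. By Theorem~\ref{subdivhomotopy} we have $f \approx f\circ\rho_k$ for every $k\ge 1$, so it suffices to prove the statement for $f' := f\circ\rho_k$ with $k$ a large integer chosen at the end. The map $f'$ is \emph{block-constant}: its domain is partitioned into $k\times k$ \emph{macro-pixels} $P_{i,j}$, on each of which $f'$ takes the constant value $f(i,j)$, and two macro-pixels are adjacent precisely when $(i,j)\sim(i',j')$ in $I_{m,n}$. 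In particular $(f')^{-1}(\be_1)$ is a union of macro-pixels, and two of its distinct connected components lie at distance at least $k$ from each other in the subdivided domain, since they descend from non-adjacent points of $f^{-1}(\be_1)$.

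The core of the argument is to show that, by a finite sequence of spider moves supported on $(f')^{-1}(\be_1)$, one can pass from $f'$ to a map $g$ in which the value $\be_1$ occurs only at isolated points, any two of which are at distance bounded below by a quantity tending to infinity with $k$. Granting this, no two $\be_1$-points of $g$ are adjacent, and letting $k\to\infty$ gives the ``arbitrarily far apart'' assertion; since $g$ is obtained from $f$ by a subdivision followed by spider moves, Theorems~\ref{subdivhomotopy} and~\ref{spidermovethm} give $g\approx f$. The structural fact that drives the reduction is that the vertices of $S^2$ adjacent to $\be_1$, other than $\be_1$ itself, are exactly $\{\pm\be_2,\pm\be_3\}$, whose induced subgraph is a $4$-cycle and so contains no triangle. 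Because $f'$ is block-constant, every $\be_1$-point on the frontier of the (shrinking) $\be_1$-region has its non-$\be_1$ neighbours confined to at most two mutually adjacent macro-pixels, hence taking at most two values, which are adjacent in that $4$-cycle; for any such pair there is a value of $\{\pm\be_2,\pm\be_3\}$ adjacent both to it and to $\be_1$, so switching the frontier point to that value is a legal spider move by Lemma~\ref{lem: spider move}. Eroding a component layer by layer in an organised radial fashion --- edge points taking the value of the macro-pixel directly outward, corner points taking the value of the diagonal macro-pixel --- preserves this local picture, so the erosion never stalls and terminates with an isolated set of $\be_1$-points; and because each component is a union of very large macro-pixels there is ample room to arrange its surviving $\be_1$-points far from one another (their number is controlled by the winding of the loop of values around the component's boundary, which does not grow with $k$).

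The main obstacle is precisely this erosion step: one must check that the values rewritten on already-eroded layers never produce a configuration in which a surviving $\be_1$-point, still adjacent to another $\be_1$-point, is surrounded by three or four pairwise non-adjacent values of $\{\pm\be_2,\pm\be_3\}$ and is thereby ``frozen'' as $\be_1$. Maintaining the invariant that every surviving $\be_1$-point sees at most two adjacent values of $\{\pm\be_2,\pm\be_3\}$ throughout the erosion, and verifying both termination and the separation of the surviving points, is where the real work lies. A convenient way to package the endgame is to reach first a map whose $\be_1$-values sit at isolated points, then flood with $-\be_1$ (Lemma~\ref{floodhomotopic}) so that each such point is enclosed in a $3\times3$ island surrounded by basepoints, and finally apply the translations of Remark~\ref{shiftrem} to slide those islands arbitrarily far apart.
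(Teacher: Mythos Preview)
Your outline is natural --- subdivide, then thin out the $\be_1$-region by spider moves --- but the heart of the proof is exactly the part you leave undone. You correctly observe that in the \emph{initial} block-constant map, any frontier $\be_1$-point sees at most two distinct values from the $4$-cycle $\{\pm\be_2,\pm\be_3\}$ (since the surrounding macro-pixels are pairwise adjacent and that cycle contains no triangle), so a spider move is available. But once you begin rewriting values on the eroded layers, this invariant can fail: a surviving $\be_1$-point may acquire neighbours with antipodal labels $\be_2,-\be_2$ or $\be_3,-\be_3$, at which point no spider move off $\be_1$ exists. You note this yourself (``where the real work lies'') and then assert that a particular radial erosion scheme ``preserves this local picture,'' but you do not verify it, and for a non-convex union of macro-pixels (with re-entrant corners, blocks meeting only diagonally, etc.) this verification is genuinely delicate. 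You also do not pin down \emph{where} the surviving $\be_1$-points end up or why they are pairwise far apart; invoking Remark~\ref{shiftrem} at the end presupposes the separation you are trying to establish.

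The paper's proof avoids the layer-by-layer bookkeeping entirely. After subdividing, it first performs two very local ``adjustments'' (a handful of spider moves) at the places where distinct $\be_1$-blocks meet diagonally or along an edge, precisely to tame the awkward corner configurations. It then applies the \emph{flood} homotopy of Lemma~\ref{floodhomotopic} three times in succession, with values $\be_2$, then $\be_3$, then $-\be_1$. Because a flood changes every eligible point simultaneously in a single one-step homotopy, there is no evolving frontier to track: one only has to analyse the final map. The analysis shows that any $\be_1$-point adjacent to at most one non-$\be_1$ label is killed by either the $\be_2$-flood or the $\be_3$-flood, so the only possible survivors are the exterior corners of the $\be_1$-blocks and the ``adjusted'' corners created in the first step --- finitely many explicitly located points, automatically at distance at least $k-2$ from one another. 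This is the missing idea in your argument: replace the fragile iterative erosion by global floods, preceded by targeted local fixes that guarantee the floods succeed everywhere except at a controlled, well-separated set of corners.
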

\begin{proof}
Choose some $k\ge 5$. We construct $g$ in three steps, beginning with $\hat f = f \circ \rho_k: S(I_{m,n},k) \to S^2$. This is the $k$-fold subdivision of $f$; we view it as a continuous function $\hat f\colon  (I_{km+k-1,kn+k-1},\partial I_{km+k-1,kn+k-1}) \to (S^2,-\be_1)$. By Theorem \ref{subdivhomotopy} we have $\hat f \approx f$.

For the second step we modify certain values of $\hat f$ in specific ways based on the specific locations of values with $\hat f(x)=\be_1$. Recalling that all values of $\hat f$ occur in constant $k\times k$ blocks, we make two adjustments described in Figures \ref{adjustment1} and \ref{adjustment2}. Making these two adjustments produces a new map $\hat f'$.

The first adjustment will change values of $\hat f$ in a small region surrounding two diagonally adjacent points $x \sim y$ with value $\hat f(x) = \hat f(y) = \be_1$, in which $x$ and $y$ are mutually adjacent to a pair of diagonal points with value different from $\be_1$. We adjust the values of $\hat f$ by making all points adjacent to either $x$ or $y$ have value $\be_1$, as in Figure \ref{adjustment1}.

\begin{figure}
\[ \vcenter{\hbox{\begin{tikzpicture}[scale=.3,every node/.style={scale=.7}]
\draw[fill= shaded] (0,5) rectangle (5,10);
\draw[fill= shaded] (5,0) rectangle (10,5);
\draw[densely dotted] (0,0) grid (10,10);
\draw[step=5] (0,0) grid (10,10);
\node[] at (4.5,5.5) {$x$};
\node[] at (5.5,4.5) {$y$};
\end{tikzpicture} }}
\to
\vcenter{\hbox{\begin{tikzpicture}[scale=.3,every node/.style={scale=.7}]
\draw[fill= shaded] (0,5) rectangle (5,10);
\draw[fill= shaded] (5,0) rectangle (10,5);
\fill[fill=shaded] (3,4) rectangle (7,6);
\fill[fill=shaded] (4,3) rectangle (6,7);
\draw[densely dotted] (0,0) grid (10,10);
\draw[step=5] (0,0) grid (10,10);
\node[] at (4.5,5.5) {$x$};
\node[] at (5.5,4.5) {$y$};
\end{tikzpicture} }}
\]
\caption{Adjustment \#1, in which all values adjacent to $x$ and $y$ become $\be_1$. In the picture, the shaded squares are pixels with value $\be_1$ under $\hat f$, and the white squares have value different from $\be_1$ (but not equal to $-\be_1$). In the picture we use $k=5$.\label{adjustment1}}
\end{figure}

The second adjustment will change values of $\hat f$ in a small region near two orthogonally adjacent points $x\sim y$ with value $\hat f(x) = \hat f(y) = \be_1$, in which $x$ and $y$ arise from different pixels of $I_{m,n}$ before the subdivision, and each is adjacent to points with value different from $\be_1$. We adjust the values of $\hat f$ to make 2 other points adjacent to $x$ have value $\be_1$, as in Figure \ref{adjustment2}.

\begin{figure}
\[
\vcenter{\hbox{\begin{tikzpicture}[scale=.3,every node/.style={scale=.7}]
\draw[fill= shaded] (0,5) rectangle (10,10);
\draw[densely dotted] (0,0) grid (10,10);
\draw[step=5] (0,0) grid (10,10);
\node[] at (4.5,5.5) {$\vphantom{y}x$};
\node[] at (5.5,5.5) {$y$};
\end{tikzpicture} }}
\to
\vcenter{\hbox{\begin{tikzpicture}[scale=.3,every node/.style={scale=.7}]
\draw[fill= shaded] (0,5) rectangle (10,10);
\fill[fill=shaded] (3,4) rectangle (5,5);
\draw[densely dotted] (0,0) grid (10,10);
\draw[step=5] (0,0) grid (10,10);
\node[] at (4.5,5.5) {$\vphantom{y}x$};
\node[] at (5.5,5.5) {$y$};

\end{tikzpicture} }}
\]
\caption{Adjustment \#2, in which 2 values adjacent to $x$ become $\be_1$. In the picture, the shaded squares are pixels with value $\be_1$ under $\hat f$, and the white squares have value different from $\be_1$ (but not equal to $-\be_1$). In the picture we use $k=5$.\label{adjustment2}}
\end{figure}

Applying these two adjustments in all applicable locations produces a map $\hat f' \colon  (I_{km+k-1,kn+k-1},\partial I_{km,kn}) \to (S^2,-\be_1)$.  Since each of these adjustments may evidently be effected by a (short) sequence of spider moves, we have $\hat f' \simeq \hat f$.

The third step of our construction is to consider $g = \hat f'_{\be_2,\be_3,-\be_1}$. This is the iterated flood of $\hat f'$ with $\be_2$ followed by $\be_3$ followed by $-\be_1$. By Lemma \ref{floodhomotopic} we will have $\hat f' \approx \hat f'_{\be_2,\be_3,-\be_1}$, and thus $g\approx f$ as desired. It remains to show that the values $x$ where $g(x)=\be_1$ are separated from one another.

The set $S$ of points $x$ with $\hat f'(x)=\be_1$ consists of a (possibly disconnected) union of $k\times k$ blocks of points, together with some extra points added by the two adjustments. This set looks something like Figure \ref{onesetfig}, in which $k=5$.

\begin{figure}
\[
\begin{tikzpicture}[scale=.5]
\filldraw[shaded] (0,5) rectangle (10,10);
\filldraw[shaded] (10,0) rectangle (20,5);
\filldraw[shaded] (15,5) rectangle (20,10);
\filldraw[shaded] (8,4) rectangle (12,6);
\filldraw[shaded] (9,3) rectangle (11,7);
\filldraw[shaded] (3,4) rectangle (5,5);
\filldraw[shaded] (5,10) rectangle (7,11);
\filldraw[shaded] (13,-1) rectangle (15,0);
\filldraw[shaded] (20,3) rectangle (21,5);
\draw[densely dotted] (-2,-2) grid (22,12);
\draw[step=5] (-2,-2) grid (22,12);
\foreach \p in {(0,5), (0,9),(9,9),(10,0),(15,9),(19,9),(19,0)} {
 \node[shift={(.25,.25)}] at \p {$\star$};
}
\foreach \p in {(4,4), (5,10), (14,-1), (20,4)} {
 \node[shift={(.25,.25)}] at \p {$\square$};
}
\end{tikzpicture}
\]
\caption{Possible arrangement of points with $\hat f'(x)=\be_1$. Points marked with $\square$ are ``adjusted corner'' points, which may be adjacent to other points with two different labels. Points marked with $\star$ are ``exterior corner'' points, which may be adjacent to other points with 3 different labels. All other points are adjacent to 0 or 1 labels other than $\be_1$. In the picture we use $k=5$.\label{onesetfig}}
\end{figure}

This set $S$ includes two types of special points: the ``exterior corner points'', lying on corners of the $k\times k$ blocks which are not adjacent to other $k\times k$ blocks in $S$, and the ``adjusted corner points", the points created by Adjustment \#2 which are adjacent to the neighboring $k\times k$ block. We label these points with $\star$ and $\square$ respectively in Figure \ref{onesetfig}. We will show that these exterior and adjusted corner points are the only points which can be labeled $\be_1$ by $g$. To do this, we show that all other points $x\in S$ have $g(x) \neq \be_1$. Since the exterior and adjusted corner points are never adjacent, and will be separated from each other by at least $k-2$ points in a horizontal or vertical direction, this will complete the proof.

Let $x\in S$ be some point which is not an exterior or adjusted corner of $S$. We consider cases according to the number of different values which can occur for points $\hat f'(y)$ with $y\sim x$. By our construction, since $x$ is not an exterior or adjusted corner, it will be adjacent to at most 1 other label different from $\be_1$.

If all points $y\sim x$ have the same value $\hat f'(y)=\be_1$, then $\hat f'_{\be_2}(x) = \be_2$ by the definition of the flood. Since no subsequent flood by $\be_3$ or $-\be_1$ can cause the label of $x$ to become $\be_1$, we see that $g(x) = \hat f'_{\be_2,\be_3,-\be_1}(x) \neq \be_1$ as desired.

Our second case is when $x$ is adjacent to only 1 label other than $\be_1$. That is, there is some $a\in S^2$ such that all points $y\sim x$ have either the value $\hat f'(y) = \be_1$ or $\hat f'(y) = a$. If $a\neq -\be_2$, then $\hat f'_{\be_2}(x) = \be_2$ and so as above we will have $g(x) \neq \be_1$ as desired. If $a = -\be_2$, then the flood by $\be_2$ will not change the label on $x$, so $\hat f'_{\be_2}(x) = \be_1$. But the subsequent flood by $\be_3$ will change the value to $\be_3$, giving $\hat f'_{\be_2,\be_3}(x) = \be_3$. Again as above this means that $g(x) \neq \be_1$ as desired.
\end{proof}

Finally, we are ready to complete the proof that $d: \pi_2(S^2,-\be_1) \to \Z$ is an isomorphism.

\begin{proof}[Proof of Theorem~\ref{thm: d iso}]
In Proposition~\ref{prop: induced d a hom} and Lemma~\ref{lem: d surjective} we have already shown that $d: \pi_2(S^2,-\be_1) \to \Z$ is a surjective homomorphism.  Here
we show that $\ker d$ is trivial. Let $f: (I_{m,n},\partial I_{m,n}) \to (S^2,-\be_1)$ with $d(f)=0$; we will show that $f \in \pi_2(S^2,-\be_1)$ is the trivial element.

By Lemma \ref{islandlem}, we may assume---up to extension homotopy---that $f$ has only isolated values of $\be_1$ occuring at the center of $3\times 3$ blocks of points, and outside of these $3\times 3$ ``islands'', $f$ is constant with value $-\be_1$. (The last flood by $-\be_1$ in the proof of Lemma~\ref{islandlem} will achieve the latter.) Furthermore we may assume that these islands are separated from each other by any distance we wish.

Using Lemma \ref{relativehomotopy} to apply Theorem \ref{islandforms} to each island locally, we may replace each island by the maps $T$, or $T^{-1}$ according to their triangle-count, or replace the island entirely by constant values $-\be_1$ if the triangle count for that island is zero.

Since these islands are separated by arbitrarily large regions of constant values of $-\be_1$, by Remark \ref{shiftrem} we may  translate the remaining islands into any configuration we wish. Since $d(f)=0$, the number of islands of type $T$ must equal the number of islands of type $T^{-1}$. Say that there are $k$ islands of each type. Then we may arrange them all to be stacked diagonally as in Figure \ref{cdotfig} (extending the domain if necessary), so that $f \approx k T \cdot k T^{-1}$, and this is the trivial element because $T$ and $T^{-1}$ are inverses.
\end{proof}

\section{Future work}\label{sec: future}

An obvious direction in which to continue is to define, for each $n$, a homotopy group $\pi_n(X, x_0)$ for $X$ a digital image.  We believe this should be a straightforward generalization of the approach taken here, with the group consisting of (suitably defined) extension-homotopy equivalence classes of maps $(I, \partial I) \to (X, x_0)$ with $I=I_{m_1} \times \cdots \times I_{m_n}$ an $n$-fold product of intervals.  We believe that most, if not all, of the results through Section~\ref{sec: defn of pi_2} should have direct generalizations.  One main issue in proceeding with this is simply an expositional one, dealing with the increasingly burdensome notational complexity.  It also seems reasonable to extend and generalize the development here to include  suitable relative homotopy groups and, if possible, develop the long exact sequence in homotopy groups in this digital context. 

In \cite{LuSc22} it was shown that the fundamental group (as defined there) of a 2D digital image is a free group.  A reasonable question to ask here is: must $\pi_2(X, x_0)$ for $X$ a 3D digital image be a free abelian group? 
On a related note, we may ask about torsion in $\pi_2(X, x_0)$.   For instance, what is an example of a digital image $X$ that has $\pi_2(X, x_0)$ with non-trivial torsion subgroup? Questions of this sort about $\pi_1(X, x_0)$ are effectively resolved in \cite{LuSc22} by the identification of the digital fundamental group with the ordinary fundamental group of the spatial realization of the clique complex of the digital image considered as a graph.  Here, then, we can ask whether $\pi_2(X, x_0)$ may be identified with the ordinary second homotopy group of the spatial realization of the clique complex of $X$? 

In a different direction, the calculation of  $\pi_2(S^2, -\be_1)$ given here  suggests many points of contact with classical topology.  Although our maps $I_{m,n} \to S^2$ are really just graph homomorphisms, the line of argument is strongly suggestive of topological ingredients such as the degree of a map, the classical homotopy group(s) of a topological space, triangulations, simplicial complexes, polyhedra and so-on.  Is it possible to somehow make these connections more precise?       


\providecommand{\bysame}{\leavevmode\hbox to3em{\hrulefill}\thinspace}
\providecommand{\MR}{\relax\ifhmode\unskip\space\fi MR }
\providecommand{\MRhref}[2]{%
  \href{http://www.ams.org/mathscinet-getitem?mr=#1}{#2}
}
\providecommand{\href}[2]{#2}

\end{document}